\documentclass[11pt,a4paper]{amsart}
\usepackage{amsmath,amsfonts,amssymb,amsthm,amscd}
\newtheorem{thm}{Theorem}
\newtheorem{lem}[thm]{Lemma}
\newtheorem{prop}[thm]{Proposition}
\newtheorem{defn}[thm]{Definition}
\newtheorem{cor}[thm]{Corollary}
\newtheorem{rem}[thm]{Remark}

\newtheorem{ex}[thm]{Example}
\newtheorem{conv}[thm]{Conventions}

\def\res{\mathop{\rm res}\limits}

\begin{document}

\title[Symmetries of $F$-manifolds and special families of connections]{Symmetries of $F$-manifolds with eventual identities and special families of connections}
\date{\today}

\author{Liana David and Ian A.B. Strachan}

\address{Institute of Mathematics \lq\lq Simion Stoilow\rq\rq\, of the
Romanian Academy\\ Calea Grivitei no. 21, Sector 1 \\Bucharest\\
Romania}

\email{liana.david@imar.ro}

\address{~~}

\address{School of Mathematics and Statistics\\ University of Glasgow\\Glasgow G12 8QQ\\ U.K.}
\email{ian.strachan@glasgow.ac.uk}

\keywords{Frobenius and $F$-manifolds, almost-duality, eventual
identities, compatible connections} \subjclass{14H70, 53D45}

\maketitle

{\bf Abstract:} We construct a duality for $F$-manifolds with
eventual identities and certain special families of connections
and we study its interactions with several well-known
constructions from the theory of Frobenius and $F$-manifolds.

\tableofcontents

\section{Introduction}

The concept of an $F$-manifold was introduced by Hertling and Manin \cite{HM}.

\begin{defn}
Let $(M,\circ,e)$ be a manifold with a fiber-preserving commutative, associative, bilinear multiplication
$\circ$ on $TM$, with unit field $e\,.$ Then $(M,\circ,e)$ is an
$F$-manifold if, for any vector fields $X, Y\in {\mathcal X}(M)$,
\begin{equation}
L_{X\circ Y} (\circ) = X \circ L_Y (\circ) +
Y \circ L_X (\circ).
\label{fman}
\end{equation}
\end{defn}

\noindent $F$-manifolds were originally defined in the context of
Frobenius manifolds (all Frobenius manifolds are examples of
$F$-manifolds) and singularity theory, and have more recently
found applications in other areas of mathematics. On writing
$\tilde{\mathcal C}(X,Y)=X \circ Y$ the $F$-manifold condition may be
written succinctly, in terms of the Schouten bracket, as
$[\tilde{\mathcal C},\tilde{\mathcal C}]=0\,.$ This provided the starting point for
the construction of multi-field generalizations and the
deformation theory of such objects \cite{Merkulov}. Interestingly,
such conditions date back to the work of Nijenhuis
\cite{Nijenhuis} and Yano and Ako \cite{YanoAko}.

Applications of $F$-manifolds within the theory of integrable
systems, and more specifically, equations of hydrodynamic type,
have also appeared
\cite{KonopelchenkoMagri,Konopelchenko,lorenzoni, lorenzoni2}. In
a sense an $F$-manifold is a more general and fundamental object
than a Frobenius manifold, so it is not surprising that such
applications have appeared, providing generalizations of ideas
originally formulated for Frobenius manifolds.

Frobenius manifolds by definition come equipped with a metric, in
fact a pencil of metrics, and the corresponding Levi-Civita
connections play a central role in the theory of these manifolds.
In \cite{manin} Manin dispensed with such metrics and considered
$F$-manifolds with compatible flat connections. Many of the
fundamental properties remain in this more general setting.
Applications of $F$-manifolds with compatible flat connections
have also recently appeared in the theory of integrable systems
\cite{lorenzoni,lorenzoni2}.

Given an $F$-manifold with an invertible vector field $\mathcal{E}$
(i.e. there is a vector field $\mathcal E^{-1}$ such
that $\mathcal{E}^{-1}\circ\mathcal{E}=e$) one may define
a new, dual or twisted, multiplication
\begin{equation}
X*Y := X\circ Y \circ {\mathcal E}^{-1},\quad\forall X, Y\in
{\mathcal X}(M)\,.
\end{equation}
This is clearly commutative and associative with $\mathcal{E}$ being the unit field. Such a multiplication was introduced
by Dubrovin \cite{dubrovin2} in the special case when $\mathcal{E}$ is
the Euler field of a Frobenius manifold and used it to define a
so-called almost dual
Frobenius manifold.
The adjective \lq almost\rq \,is used since, while the new objects satisfy most of the axioms of a Frobenius
manifold, they crucially do not satisfy all of them.
A question, raised by Manin \cite{manin}, is the characterization of those
vector fields - called eventual identities -
for which $*$ defines an $F$-manifold. This question was answered by the authors in \cite{fduality}. At the level of
$F$-manifold structures one has a perfect duality: only when metrics are introduced with certain specified
properties is this duality broken to almost-duality. The overall aim of this paper is to construct,
by an appropriate twisting by an eventual identity, a duality for $F$-manifolds with eventual identities and certain
special families of connections and to study various applications of such a duality.

\subsection{Outline}

This paper is structured as follows:

In Section \ref{eventualidentity-sect} we review the basic facts we need about eventual identities
and compatible connections on $F$-manifolds. For more details on
these topics, see \cite{fduality, hert,manin}.

In Section \ref{eventualidentity-examples} we give examples of eventual identities. The most important class of eventual identities are (invertible) Euler fields
and their powers on Frobenius, or, more generally, $F$-manifolds. We describe the eventual identities on semi-simple $F$-manifolds,
and we construct a class of structures close to Frobenius manifolds, which admit an eventual identity, but no Euler field affine in flat coordinates.

The motivation for our treatment from Section \ref{compatible-sect} is the
second structure connection of a Frobenius manifold
$(M, \circ , e, E,\tilde{g})$
and the way it is related to the first structure
connection (i.e. the Levi-Civita connection of $\tilde{g}$).
The first structure connection is a compatible connection on the underlying $F$-manifold $(M, \circ , e)$
of the Frobenius manifold.
The second structure connection
is the Levi-Civita connection of the second metric $g(X, Y) =\tilde{g}(E^{-1}\circ X, Y)$
and is compatible with the dual multiplication $X*Y = X\circ Y\circ E^{-1}.$
With this motivation, it is natural to ask if the dual $(M, *, {\mathcal E})$ of an $F$-manifold $(M, \circ , e, {\mathcal E})$ with an eventual identity $\mathcal E$
inherits
a canonical compatible, torsion-free connection, from such a connection $\tilde{\nabla}$ on $(M, \circ , e)$.
We prove that
there is a canonical family, rather than a single connection.
This family consists of all connections of the form
\begin{equation}\label{familie}
\nabla_{X}Y:={\mathcal E}\circ\tilde{\nabla}_{X}({\mathcal E}^{-1}\circ X) + {\mathcal E}\circ \tilde{\nabla}_{Y}({\mathcal E}^{-1})\circ X
+V\circ X\circ Y,
\end{equation}
where $V$ an arbitrary vector field, and arises naturally by asking that it contains torsion-free connections,
which are compatible with the dual multiplication $X*Y=X\circ Y\circ {\mathcal E}^{-1}$ and are
related to
$\tilde{\nabla}$ in a similar way as the first and second structure connections of a Frobenius manifold are related (see Theorem \ref{main} and the comments before).
Finally we define the second structure connection of $(M, \circ , e, {\mathcal E}, \tilde{\nabla})$ and we show that it
belongs to the canonical family (\ref{familie})  (see Definition \ref{def-second} and Proposition \ref{structure}).

In Section \ref{duality-sect}  we develop our main result (see Theorem
\ref{main-duality}). Here we introduce the notion of special
family of connections, which plays a key role throughout this paper,
and we interpret Theorem \ref{main} as a duality (or an involution) on the set of $F$-manifolds with eventual identities and
special families of connections.

The following sections are devoted to applications of our main
result. In Section \ref{identity-sect} we construct a duality for
$F$-manifolds with eventual identities and compatible,
torsion-free connections preserving the unit fields (see
Proposition \ref{dual-id} with $U=0$). Therefore, while in the
setting of Frobenius manifolds the almost duality is not symmetric
(the unit field of a Frobenius manifold is parallel, but
the unit field of a dual almost Frobenius manifold is not, in general),
in the larger setting of $F$-manifolds there is a perfect symmetry at the level of compatible, torsion-free connections
which preserve the unit fields.  We also
construct a duality for $F$-manifolds with eventual identities and
second structure connections (see Proposition \ref{dual-sec}).
These dualities follow from our main result (Theorem \ref{main-duality} of
Section \ref{duality-sect} mentioned above), by
noticing that we can fix a connection from a special family using
the covariant derivative of the unit field (see Lemma \ref{s-u}).

In Section \ref{curvature-sect-1}  we
consider compatible, torsion-free connections on an $F$-manifold $(M, \circ ,e)$, which satisfy
the curvature condition
\begin{equation}\label{circ-fin}
V\circ R_{Z,Y}(X) + Y\circ R_{V,Z}(X) + Z\circ R_{Y,V}(X)=0,\quad\forall X, Y, Z, V,
\end{equation}
introduced and studied in \cite{lorenzoni}, in connection
with the theory of equations of hydrodynamic type.
This condition serves as the compatibility condition for an over-determined
linear system for families of vector fields that generate the symmetries
of a system of hydrodynamic type. In the case of a semi-simple manifold,
when canonical coordinates exist, this curvature condition reduces to the
well-known semi-Hamiltonian condition first introduced by Tsarev \cite{tsarev}.
We show that condition (\ref{circ-fin}), if true, is independent of the choice
of connection in a special family
and is preserved by  the
duality for $F$-manifolds with eventual identities and special families of connections (see Theorem \ref{lorenz-duality}).

In the same framework, in Section \ref{curvature-sect-2} we consider
flat, compatible, torsion-free connections on $F$-manifolds and their behaviour under the duality.
It is easy to see that a special family of connections always contains non-flat
connections (see Lemma \ref{nabla-lema}).
Our main result in this section is a necessary and sufficient condition on the eventual identity,
which insures that the dual of a special family which contains a flat connection also has this property
(see Theorem \ref{curbura-E}).
This condition generalizes the usual condition
$\tilde{\nabla}^{2}(E)=0$ on the Euler field of a Frobenius
manifold. We end this section with various other relevant remarks and comments in this direction
(see Remark  \ref{spec2}).

In Section \ref{Legendre} we develop a method which produces
$F$-manifolds with compatible, torsion-free connections from an
external bundle with additional structures. Under flatness
assumptions, this is a reformulation of Theorem 4.3 of
\cite{manin}. External bundles with additional structures can be
used to construct Frobenius manifolds \cite{saito} and the so
called CV or CDV-structures on manifolds \cite{hert-paper}, which
are key notions in $tt^{*}$-geometry and share many properties in
common with Frobenius structures. Treating the tangent bundle as
an external bundle, we introduce the notion of Legendre (or
primitive) field and Legendre transformation of a
special family of connections (see Definition
\ref{leg-def}). They are closely related to the corresponding
notions \cite{dubrovin, manin-book} from the theory of Frobenius
manifolds (see Remark \ref{leg-rem}).
We prove that any Legendre transformation of a special family
of connections on an $F$-manifold $(M, \circ , e)$
is also a special family on $(M, \circ , e)$ and we
show that various curvature properties  of special families are preserved by the Legendre transformations
(see Proposition \ref{leg}).
Our main result in this section states that our duality for $F$-manifolds with eventual
identities and special families of connections commutes with
Legendre transformations (see Theorem \ref{dual-thm}).\\

{\bf Acknowledgements.} L. David would like to thank the University of Glasgow for hospitality and excellent
research environment, and acknowledges
partial financial support from
the Romanian National Authority for Scientific Research, CNCS-UEFISCDI,
project no. PN-II-ID-PCE-2011-3-0362 and
an EPSRC grant EP/H019553/1. I. Strachan would similarly like to thank SISSA, the Carnegie Trust for the Universities of Scotland, and the EPSRC.

\section{Preliminary material}\label{eventualidentity-sect}

In this section we recall two notions we need from the theory
of $F$-manifolds: eventual
identities, recently introduced in \cite{fduality}, and
compatible connections. We begin by fixing
our conventions.

\begin{conv}{\rm
All results from this paper are stated in the smooth category (except the examples from Section \ref{eventualidentity-examples})
but they also apply to the holomorphic setting. Along the paper
${\mathcal X}(M)$ is the sheaf of smooth vector fields on a smooth
manifold $M$ and, for a smooth vector bundle $V$ over $M$,
$\Omega^{1}(M, V)$ is the sheaf of smooth $1$-forms with values in
$V$.}
\end{conv}

\subsection{Eventual identities on $F$-manifolds}

As already mentioned in the introduction, an invertible vector
field $\mathcal E$ on an $F$-manifold $(M,\circ , e)$ is an
eventual identity \cite{manin} if the multiplication
$$
X*Y := X\circ Y\circ {\mathcal E}^{-1}
$$
defines a new $F$-manifold structure
on $M.$ The following theorem proved in \cite{fduality} will be
used throughout this paper.

\begin{thm}\label{previousmain} i) Let $(M, \circ , e)$ be an $F$-manifold and ${\mathcal E}$
an invertible vector field. Then ${\mathcal E}$ is an eventual identity
if and only if
\begin{equation}\label{char}
L_{\mathcal E}(\circ ) ( X, Y)= [e, {\mathcal E}]\circ X\circ Y,\quad\forall
X, Y\in {\mathcal X}(M).
\end{equation}
ii) Assume that (\ref{char}) holds and let
$$
X*Y = X\circ Y\circ {\mathcal E}^{-1}
$$
be the new $F$-manifold multiplication. Then $e$ is an eventual
identity on $(M, *, {\mathcal E})$ and the map
$$
(M, \circ , e, {\mathcal E} )\rightarrow (M, * , {\mathcal E}, e)
$$
is an involution on the set of $F$-manifolds with eventual identities.
\end{thm}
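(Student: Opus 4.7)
The plan is to proceed by direct computation, reducing statements about the twisted product $*$ to identities for the original product $\circ$. The starting point is the key identity
\[
L_{Z}(*)(X,Y) = L_{Z}(\circ)(X,Y)\circ \mathcal{E}^{-1} + L_{Z}(\circ)(X\circ Y,\mathcal{E}^{-1}) + X\circ Y\circ [Z,\mathcal{E}^{-1}],
\]
valid for arbitrary vector fields $X,Y,Z$, which follows from applying the Leibniz rule to $L_{Z}(X*Y)=L_{Z}(X\circ Y\circ \mathcal{E}^{-1})$. Two consequences of the $\circ$-$F$-manifold axiom (\ref{fman}) are used repeatedly: the unit field satisfies $L_{e}(\circ)=0$ (take $X=e$ in (\ref{fman})), and applying (\ref{fman}) to $(\mathcal{E},\mathcal{E}^{-1})$ yields $L_{\mathcal{E}^{-1}}(\circ) = -\mathcal{E}^{-2}\circ L_{\mathcal{E}}(\circ)$.

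For part (i), I would substitute the key identity into the obstruction tensor
\[
\Phi^{*}(X,Y,Z,W) := L_{X*Y}(*)(Z,W) - X*L_{Y}(*)(Z,W) - Y*L_{X}(*)(Z,W),
\]
whose vanishing is the $*$-$F$-manifold condition, and expand $L_{X\circ Y\circ \mathcal{E}^{-1}}(\circ)$ via the $\circ$-$F$-manifold axiom. After collecting terms, I expect everything to reduce (modulo the $\circ$-$F$-manifold condition) to a non-degenerate multiple of the tensor
\[
T_{\mathcal{E}}(A,B) := L_{\mathcal{E}}(\circ)(A,B) - [e,\mathcal{E}]\circ A\circ B,
\]
so that $\Phi^{*}\equiv 0$ iff $T_{\mathcal{E}}\equiv 0$, which is precisely (\ref{char}).

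For part (ii), I would first verify (\ref{char}) for $(M,*,\mathcal{E})$ with new eventual identity $e$. Applying the key identity with $Z=e$ and using $L_{e}(\circ)=0$ gives $L_{e}(*)(X,Y) = X\circ Y\circ [e,\mathcal{E}^{-1}]$. A short computation from $L_{e}(\mathcal{E}\circ \mathcal{E}^{-1})=L_{e}(e)=0$ yields $[e,\mathcal{E}^{-1}]=-[e,\mathcal{E}]\circ \mathcal{E}^{-2}$, whence $L_{e}(*)(X,Y) = [\mathcal{E},e]\circ X\circ Y\circ \mathcal{E}^{-2} = [\mathcal{E},e]*X*Y$, as required. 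For the involution, the $*$-inverse of $e$ solves $e*f=\mathcal{E}$, i.e., $f\circ \mathcal{E}^{-1}=\mathcal{E}$, giving $f=\mathcal{E}\circ \mathcal{E}$; the dual product is then $X\star Y := X*Y*f = X\circ Y$ with unit $e$, recovering the original $F$-manifold.

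The main obstacle is the bookkeeping in part (i): the expansion of $\Phi^{*}$ produces many terms of varied shape---some involving $L_{X}(\circ), L_{Y}(\circ)$, others involving Lie brackets with $\mathcal{E}^{-1}$---and identifying the surviving combination with $T_{\mathcal{E}}$ after repeated applications of (\ref{fman}) requires care. A more conceptual route via the Schouten--Nijenhuis formulation $[\tilde{C},\tilde{C}]=0$ of the $F$-manifold axiom might streamline the algebra, but the essential content of the computation cannot be avoided.
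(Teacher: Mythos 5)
The paper does not actually prove this theorem --- it quotes it from \cite{fduality} --- so there is no in-house argument to compare against; I have therefore checked your proposal on its own merits. Your scaffolding is correct: the ``key identity'' for $L_{Z}(*)$ follows from two applications of the Leibniz rule as you say; $L_{e}(\circ)=0$ and $L_{\mathcal E^{-1}}(\circ)=-\mathcal E^{-2}\circ L_{\mathcal E}(\circ)$ are correct consequences of (\ref{fman}); and your part~(ii) computations (the formula $L_{e}(*)(X,Y)=[\mathcal E,e]*X*Y$, the identification $e^{-1,*}=\mathcal E^{2}$, and the recovery of $\circ$ as the double dual) are complete and correct, \emph{granted} part~(i).

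The gap is in part~(i), which is the entire content of the theorem, and it is twofold. First, the reduction of $\Phi^{*}$ is only announced, not performed; since this computation is the theorem, ``I expect everything to reduce'' cannot stand in for it. Second, and more substantively, the announced endpoint is not what the computation yields. If you carry out the expansion you describe (substitute the key identity, expand $L_{X\circ Y\circ\mathcal E^{-1}}(\circ)$ by the $\circ$-axiom, and expand $[X\circ Y\circ\mathcal E^{-1},\mathcal E^{-1}]$ by Leibniz), all the $L_{X}(\circ)$ and $L_{Y}(\circ)$ terms cancel and, writing $S:=L_{\mathcal E^{-1}}(\circ)$, one is left with
\begin{equation*}
\Phi^{*}(X,Y,Z,W)=X\circ Y\circ\bigl(\mathcal E^{-1}\circ S(Z,W)+S(Z\circ W,\mathcal E^{-1})\bigr)-Z\circ W\circ\bigl(\mathcal E^{-1}\circ S(X,Y)+S(X\circ Y,\mathcal E^{-1})\bigr).
\end{equation*}
This is \emph{not} a non-degenerate multiple of the two-argument tensor $T_{\mathcal E}$: it is an antisymmetrized four-argument expression involving $S$ evaluated at two genuinely different argument pairs, $(Z,W)$ and $(Z\circ W,\mathcal E^{-1})$. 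The ``if'' direction is then immediate (substituting $S(A,B)=[e,\mathcal E^{-1}]\circ A\circ B$ kills the expression), but the ``only if'' direction still requires a separate argument to disentangle $S(X,Y)$ from $S(X\circ Y,\mathcal E^{-1})$ --- e.g.\ specializing $Z=W=\mathcal E$ and then $W=\mathcal E$ with $Z$ free, and solving the resulting system --- before one can conclude $S(X,Y)=c\circ X\circ Y$ for a fixed field $c$, and hence (\ref{char}). That extraction is the nontrivial step of the proof, and your proposal neither identifies it nor supplies it.
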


Using the characterization of eventual identities provided
by Theorem \ref{previousmain} {\it i)}, it may be shown that the eventual
identities form a subgroup in the group of invertible vector fields
on an $F$-manifold.
Also, if $\mathcal E$ is an eventual identity then, for
any $m, n\in {\mathbb Z}$,
\begin{equation}\label{e-m-n}
[{\mathcal E}^{n}, {\mathcal E}^{m}] = (m-n) {\mathcal
E}^{m+n-1}\circ [e, {\mathcal E}].
\end{equation}
Moreover, if ${\mathcal E}_{1}$ and ${\mathcal E}_{2}$ are
eventual identities and $[{\mathcal E}_{1}, {\mathcal E}_{2}]$ is
 invertible, then $[{\mathcal E}_{1}, {\mathcal
E}_{2}]$ is also an eventual identity. Any eventual identity on a
product $F$-manifold decomposes into a sum of eventual
identities on the factors.
It follows that the duality for $F$-manifolds with eventual identities,
described in Theorem \ref{previousmain} {\it ii)},
commutes with the decomposition of $F$-manifolds \cite{hert}.
For proofs of
these facts, see \cite{fduality}.

\begin{rem}{\rm An Euler field on an $F$-manifold $(M, \circ ,e)$
is a vector field $E$ such that $L_{E}(\circ )= d\circ $, where
$d$ is a constant, called the weight of $E$. It is easy to check
that $[e,E]=de.$ From Theorem \ref{previousmain}, any invertible
Euler field is an eventual identity. It would be interesting to
generalize to eventual identities the various existing
interpretations of Euler fields in terms of extended connections,
like e.g. in \cite{manin}, where the $F$-manifold comes with a
compatible flat structure and a vector field is shown to be Euler
(of weight one) if and only if a certain extended connection is
flat.}
\end{rem}

\subsection{Compatible connections on $F$-manifolds}\label{compatible-conn-prel}

Let $(M, \circ ,e )$ be a manifold with a fiber-preserving commutative,
associative, bilinear multiplication $\circ$ on
$TM$ with unit field $e$. Let $\tilde{\mathcal C}$ be the
$\mathrm{End}(TM)$-valued $1$-form (the associated Higgs field) defined by
$$
\tilde{\mathcal C}_{X}(Y) = X\circ Y,\quad \forall X,Y\in
{\mathcal X}(M).
$$
Let $\tilde{\nabla}$ be a connection on $TM$, with torsion
$T^{\tilde{\nabla}}.$ The exterior derivative
$d^{\tilde{\nabla}}\tilde{\mathcal C}$ is a $2$-form with values
in $\mathrm{End}(TM)$, defined by
$$
(d^{\tilde{\nabla}}\tilde{\mathcal C})_{X,Y} = \tilde{\nabla}_{X}
(\tilde{\mathcal C}_{Y}) - \tilde{\nabla}_{Y}(\tilde{\mathcal
C}_{X}) - \tilde{\mathcal C}_{[X,Y]},\quad \forall X,Y\in
{\mathcal X}(M).
$$
It is straightforward to check that for any $X,Y, Z\in {\mathcal
X}(M)$,
\begin{equation}\label{torsion}
(d^{\tilde{\nabla}}\tilde{\mathcal C})_{X,Y}(Z) =
\tilde{\nabla}_{X}(\circ )(Y,Z) - \tilde{\nabla}_{Y}(\circ )(X,Z)
+ T^{\tilde{\nabla}}(X,Y)\circ Z,
\end{equation}
where the $(3,1)$-tensor field $\tilde{\nabla}(\circ )$ is defined
by
\begin{equation}\label{deriv-nabla}
\tilde{\nabla}_{X}(\circ )(Y, Z):= \tilde{\nabla}_{X}(Y\circ Z) -
\tilde{\nabla}_{X}(Y)\circ Z- Y \circ \tilde{\nabla}_{X}(Z),\quad
\forall X, Y, Z\in {\mathcal X}(M).
\end{equation}
The connection $\tilde{\nabla}$ is called compatible with $\circ$
if $\tilde{\nabla}(\circ )$ is totally symmetric
(as a vector valued $(3,0)$-tensor field). Note, from the
commutativity of $\circ$, that $\tilde{\nabla}_{X}(\circ )(Y, Z)$
is always symmetric in $Y$ and $Z$ and the total symmetry of
$\tilde{\nabla}(\circ )$ is equivalent to
$$
\tilde{\nabla}_{X}(\circ )(Y, Z) = \tilde{\nabla}_{Y}(\circ )(X,
Z),\quad \forall X, Y, Z\in {\mathcal X}(M).
$$
From (\ref{torsion}), if $\tilde{\nabla}$ is torsion-free, then it
is compatible with $\circ$ if and only if
\begin{equation}\label{ext-c}
d^{\tilde{\nabla}}\tilde{\mathcal C}=0.
\end{equation}
From Lemma 4.3 of \cite{hert-paper} (which holds both in the smooth and holomorphic
settings), the existence of a connection $\tilde{\nabla}$ on $TM$
(not necessarily torsion-free) such that relation (\ref{ext-c})
holds implies that $(M, \circ , e)$ is an $F$-manifold. In
particular, the existence of a torsion-free connection, compatible
with $\circ$, implies that $(M, \circ ,e)$ is an $F$-manifold
\cite{hert}. Moreover, if $\tilde{\nabla}$ is the Levi-Civita
connection of a multiplication invariant metric $\tilde{g}$ (i.e.
$\tilde{g}(X\circ Y, Z) = \tilde{g}(X,Y\circ Z)$ for any vector
fields $X,Y,Z$), then the total symmetry of $\tilde{\nabla}(\circ
)$ is equivalent to the $F$-manifold condition (\ref{fman}) and
the closedness of the coidentity $\tilde{g}(e)$ (which is the
$1$-form $\tilde{g}$-dual to the unit
field $e$), see Theorem 2.15 of \cite{hert}.\\

Finally, we need to recall the definition of Frobenius manifolds.

\begin{defn} A Frobenius manifold is an $F$-manifold $(M, \circ , e, E, \tilde{g})$ together with an
Euler field $E$ of weight $1$ and a multiplication invariant flat metric
$\tilde{g}$, such that the following conditions are
satisfied:\\

i) the unit field $e$ is parallel with respect to the Levi-Civita
connection of $\tilde{g}$;\\

ii) the Euler field $E$ rescales the metric $\tilde{g}$ by a
constant.
\end{defn}

Since the coidentity $\tilde{g}(e)$ of a Frobenius manifold $(M,
\circ , e, E, \tilde{g})$ is a parallel (hence closed) $1$-form,
our comments above imply that the Levi-Civita connection
$\tilde{\nabla}$ of $\tilde{g}$ (sometimes called the first structure connection)
is a compatible connection on the underlying
$F$-manifold $(M, \circ ,e).$ Moreover, since $\tilde{\nabla}$ is flat and $E$ rescales $\tilde{g}$ by a constant,
$\tilde{\nabla}^{2}E=0$, i.e.
$$
\tilde{\nabla}^{2}_{X,Y}(E):=
\tilde{\nabla}_{X}\tilde{\nabla}_{Y}E -
\tilde{\nabla}_{\tilde{\nabla}_{X}Y}E=0,\quad X,Y\in {\mathcal
X}(M).
$$
The above relation implies that $E$ is affine in flat coordinates for $\tilde{g}.$

\section{Examples of eventual identities}\label{eventualidentity-examples}

Most of the results in this paper are constructive in nature: given some geometric
structure based around an $F$-manifold, the existence of an eventual identity
enables one to study symmetries of the structure and hence to construct new examples
of the structure under study. For this procedure to work requires the existence of such an eventual identity.

The existence of an eventual identity on an $F$-manifold is not a priori obvious since equation (\ref{char}) in Theorem \ref{previousmain},
seen as differential equations for the components of ${\mathcal E}$, is overdetermined: there are $n(n+1)/2$ equations for the $n$ unknown
components (where $n={\rm dim}(M)$). However, if the multiplication is semi-simple then solutions do exist \cite{fduality}:

\begin{ex}{\rm
Let $(M, \circ, e)$  be a semi-simple $F$-manifold with
canonical coordinates  $(u^{1}, \cdots , u^{n})$, i.e.
$$
\frac{\partial}{\partial u^{i}}\circ \frac{\partial}{\partial
u^{j}} = \delta_{ij} \frac{\partial}{\partial u^{j}}, \quad\forall
i, j
$$
and
$$
e = \frac{\partial~}{\partial u^{1}}+\cdots +
\frac{\partial~}{\partial u^{n}}.
$$
Any eventual identity is of the form
$$
{\mathcal E} = f_{1}(u^1)\frac{\partial~}{\partial u^{1}} + \cdots +
f_{n}(u^n)\frac{\partial~}{\partial u^{n}},
$$
where $f_{i}$ are arbitrary non-vanishing functions depending only on
$u^{i}.$}
\end{ex}

In the following example one has a structure close
to that of a Frobenius manifold, but no Euler field exists which is affine in the flat coordinates. However one may construct an eventual identity for the
underlying $F$-manifold structure.

\begin{ex}
{\rm Consider the prepotential
\[
F=\frac{1}{2} t_1^2 t_2 + \frac{1}{4} t_2^2 \,\log(t_2^2) - \frac{\kappa}{6} t_1^3\,.
\]
This may be regarded as a deformation of the two-dimensional Frobenius manifold given by $\kappa=0\,,$
but if $\kappa\neq 0$ one does not have an Euler  field which is affine in flat coordinates, i.e. the resulting multiplication
is not quasi-homogeneous. However, one can construct an eventual identity by deforming the
vector field
\[
E=t_1 \frac{\partial}{\partial t_1} + 2 t_2 \frac{\partial}{\partial t_2}
\]
which is the Euler field for the initial Frobenius manifold.

Consider the ansatz
\[
\mathcal{E}= E + \kappa \left[f(t_2) \frac{\partial}{\partial t_1} + g(t_2) \frac{\partial}{\partial t_2} \right]
\]
where for simplicity it has been assumed that $f$ and $g$ are functions of $t_2$ alone.

The eventual identity condition (\ref{char}) is trivially satisfied if $X$ or $Y=e\,$ so one only has to consider the case $X=Y=\frac{\partial}{\partial t_2}$,
and the two components of the resulting eventual identity equation yields differential equations the functions $f$ and $g\,,$ namely (where $x=t_2$ and $g^\prime=dg/dx$ etc.):

\begin{eqnarray*}
2 x g^\prime - g - \kappa x f^\prime & = & 0\,,\\
2 x^2 f^\prime + \kappa x g^\prime - \kappa g & = & x\,.
\end{eqnarray*}

To get an understanding of these equations one can construct a solution as a power series in the \lq deformation\rq~parameter $\kappa\,:$
\begin{eqnarray*}
f(t_2) & = & \left\{ \frac{1}{2} \log t_2\right\} + \kappa \left\{ \frac{-1}{2 t_2^\frac{1}{2}} \right\} + \kappa^2 \left\{ \frac{1}{4 t_2} \right\} + ... \,\\
g(t_2) & = & \left\{\frac{t_2^\frac{1}{2}}{2}\right\} + \kappa \left\{ \frac{-1}{2} \right\} + \kappa^2 \left\{ \frac{-1}{8 t_2^{\frac{3}{2}}}\right\} + ... \,.
\end{eqnarray*}
Solving the differential equations exactly (and ignoring arbitrary constants) gives
\[
f(x)=\frac{1}{2} \log x + \kappa \Delta(x)\,, \qquad\qquad g(x) = -2 x \Delta(x)
\]
where
\[
\Delta(x) = \frac{ \tanh^{-1} \left[ \frac{ \sqrt{\kappa^2 + 4 x}}{\kappa}\right] }{\sqrt{\kappa^2 + 4 x}}\,.
\]
Note that the form of the ansatz ensure that $[e,\mathcal{E}]=e\,.$}
\end{ex}

This example is not isolated but belongs to a wider class. The $A_N$-Frobenius manifold may be constructed
via a superpotential construction. With
\[
\lambda(p) = p^{N+1} + s_N p^{N-1} + \ldots + s_2 p + s_1
\]
the metric and multiplication for the $A_{N}$-Frobenius structure are given by the residue formulae
\begin{eqnarray*}
\eta(\partial_{s_i},\partial_{s_j})  & = &   -\sum \res_{d\lambda=0}
\left\{ \frac{\partial_{s_i}\lambda(p) \,
\partial_{s_j}\lambda(p)}{\lambda'(p)}\,dp\right\} \label{metric}\,,\\
c\,(\partial_{s_i},\partial_{s_j},\partial_{s_k})  & = & -\sum
\res_{d\lambda=0} \left\{ \frac{\partial_{s_i}\lambda(p) \,
\partial_{s_j}\lambda(p)\,
\partial_{s_k}\lambda(p)}{\lambda'(p)}\,dp\right\}
\label{multiplication}
\end{eqnarray*}
and the Euler and identity vector fields follow from the form of the superpotential. This construction
may be generalized by taking $\lambda$ to be any holomorphic map from a Riemann surface to $\mathbb{P}^1\,,$
with the moduli space of such maps (or Hurwitz space) carrying the structure of a Frobenius manifold.

To move away from Frobenius manifolds one may consider different classes of superpotentials \cite{FS}. Taking
\begin{equation}
\lambda = {\rm rational~function} + \kappa \log(\rm rational~function)
\label{waterbag}
\end{equation}
results, via similar residue formulae as above, to a flat metric and a semi-simple solution of the WDVV equations
(note, the above example falls, after a Legendre transformation, into this class). Since the multiplication is
semi-simple, eventual identities will exist, but their form in the flat coordinates for $\eta$ is not obvious.

Remarkably, this type of superpotential appeared at the same time in two different areas of mathematics. Motivated
by the theory of integrable systems Chang \cite{chang} constructed a two-dimensional solution to the WDVV equations from a so-called
\lq water-bag\rq~reduction of the Benny hierarchy, and this was generalized to arbitrary dimension in \cite{FS} by considering
superpotential of the form (\ref{waterbag}). Mathematically, the same form of superpotential appeared in the work of
Milanov and Tseng on the equivariant orbifold structure of the complex projective line \cite{Milanov}.
We will return to the construction of such eventual identities for these classes of $F$-manifolds in a future paper.

\section{Compatible connections on
$F$-manifolds and dual $F$-manifolds}\label{compatible-sect}

We begin with a short review, intended for motivation,  of the second structure connection of a Frobenius manifold.
Then we prove our main result, namely that the dual of an $F$-manifold $(M,\circ , e, {\mathcal E},\tilde{\nabla})$ with an eventual identity $\mathcal E$ and a compatible,
torsion-free connection $\tilde{\nabla}$, comes naturally equipped with a family of compatible, torsion-free connections -  namely the connections
$\nabla^{A}$ from Theorem \ref{main},  with $A$ given by (\ref{adaugat-fin}).
Finally, we define the second structure connection for $(M,\circ , e, {\mathcal E},\tilde{\nabla})$ and we show that it belongs to this family
(see Section \ref{Frobenius}).

\subsection{Motivation}

Recall that the second structure connection
$\widehat{\nabla}$ of a Frobenius manifold $(M, \circ , e, E,
\tilde{g})$ is the Levi-Civita connection of the second metric
$$
g(X, Y) = \tilde{g}(E^{-1}\circ X, Y)
$$
(where we assume that $E$ is invertible).
Together with the first structure connection $\tilde{\nabla}$, it determines a pencil of flat connections, which plays a key role in the theory
of Frobenius manifolds. The compatibility of
$\widehat{\nabla}$ with the dual multiplication
$$
X*Y = X\circ Y\circ E^{-1}
$$
follows from a result of Hertling already mentioned in Section
\ref{eventualidentity-sect}: $\widehat{\nabla}$  is the Levi-Civita connection of $g$, which is an invariant
metric on the dual $F$-manifold $(M, *, E)$ and the coidentity
$g(E)$ of $(M, *, E, g)$ is closed (because $g(E) = \tilde{g}(e)$,
which is closed). From Theorem 9.4
(a), (e), of \cite{hert}, $\widehat{\nabla}$ is related to
$\tilde{\nabla}$  by
\begin{equation}\label{precis}
\widehat{\nabla}_{X}(Y)= E\circ \tilde{\nabla}_{X}( {E}^{-1}\circ
Y) -\tilde{\nabla}_{E^{-1}\circ Y}(E)\circ X +\frac{1}{2} (D+1)
X\circ Y\circ{E}^{-1},
\end{equation}
where $D$ is the constant given by $L_{E}(\tilde{g} )
=D\tilde{g}$.

\subsection{The canonical family on the dual $F$-manifold}\label{can-fam}

Motivated by the structure connections of a Frobenius manifold, we now
consider an $F$-manifold $(M, \circ, e,{\mathcal E},\tilde{\nabla})$ with an eventual identity $\mathcal E$ and a compatible, torsion-free connection
$\tilde{\nabla}$.
On the dual $F$-manifold $(M, *, {\mathcal E})$ we are looking for compatible, torsion-free connections, related to $\tilde{\nabla}$ in a similar way as
the  first and second structure connections of a Frobenius manifold are related: that is, we consider connections of the form
\begin{equation}\label{nabla}
\nabla^{A}_{X}Y = {\mathcal E}\circ \tilde{\nabla}_{X}({\mathcal
E}^{-1}\circ Y) + A(Y)\circ X,\quad\forall X, Y\in {\mathcal X}(M),
\end{equation}
where $A$ is a section of $\mathrm{End}(TM)$.
Our main result in this section is the following.

\begin{thm}\label{main}
Let $(M, \circ ,e, {\mathcal E }, \tilde{\nabla})$ be an $F$-manifold with an eventual identity $\mathcal E$ and compatible, torsion-free connection
$\tilde{\nabla}.$ The connection $\nabla^{A}$ defined by
(\ref{nabla})
is torsion-free and compatible with the dual multiplication
\begin{equation}\label{star-multiplication}
X*Y := X\circ Y\circ {\mathcal E}^{-1}
\end{equation}
if and only if
\begin{equation}\label{adaugat-fin}
A(Y)= {\mathcal E}\circ \tilde{\nabla}_{Y}({\mathcal E}^{-1})
+V\circ Y, \quad\forall Y\in {\mathcal X}(M),
\end{equation}
where $V$ is an arbitrary vector field.
\end{thm}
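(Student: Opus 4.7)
The proof naturally splits into a necessary and a sufficient direction. The necessary direction is a torsion computation; the sufficient direction reduces, after an easy observation, to verifying $*$-compatibility in the special case $V=0$.

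\emph{Necessity.} Assuming $\nabla^{A}$ is torsion-free, I compute $T^{\nabla^{A}}(X,Y) = \nabla^{A}_{X}Y - \nabla^{A}_{Y}X - [X,Y]$. Using $\circ$-compatibility of $\tilde{\nabla}$ to expand
\begin{equation*}
\mathcal{E}\circ\tilde{\nabla}_{X}(\mathcal{E}^{-1}\circ Y) \;=\; \tilde{\nabla}_{X}Y + \mathcal{E}\circ\tilde{\nabla}_{X}(\mathcal{E}^{-1})\circ Y + \mathcal{E}\circ\tilde{\nabla}_{X}(\circ)(\mathcal{E}^{-1},Y),
\end{equation*}
the torsion-freeness of $\tilde{\nabla}$ absorbs $\tilde{\nabla}_{X}Y - \tilde{\nabla}_{Y}X$ into $[X,Y]$, while the total symmetry of $\tilde{\nabla}(\circ)$ cancels the correction terms $\mathcal{E}\circ\tilde{\nabla}_{X}(\circ)(\mathcal{E}^{-1},Y)$ against their $X\leftrightarrow Y$ counterparts. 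One is left with
\begin{equation*}
T^{\nabla^{A}}(X,Y) = \bigl(\mathcal{E}\circ\tilde{\nabla}_{X}(\mathcal{E}^{-1}) - A(X)\bigr)\circ Y - \bigl(\mathcal{E}\circ\tilde{\nabla}_{Y}(\mathcal{E}^{-1}) - A(Y)\bigr)\circ X.
\end{equation*}
Imposing $T^{\nabla^{A}}=0$ and specialising $X=e$ forces $A(Y) = \mathcal{E}\circ\tilde{\nabla}_{Y}(\mathcal{E}^{-1}) + V\circ Y$ with $V := A(e) - \mathcal{E}\circ\tilde{\nabla}_{e}(\mathcal{E}^{-1})$.

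\emph{Sufficiency.} Given $A$ of the stated form, torsion-freeness is immediate by reversing the computation just made. For $*$-compatibility the crucial simplification is that shifting $V\mapsto V+W$ changes $\nabla^{A}_{X}Y$ by $W\circ X\circ Y$, which is symmetric in $X,Y$, and a brief direct calculation shows that this shifts the defect
\begin{equation*}
\nabla^{A}(*)(X,Y,Z) \;:=\; \nabla^{A}_{X}(Y*Z) - (\nabla^{A}_{X}Y)*Z - Y*(\nabla^{A}_{X}Z)
\end{equation*}
by $-W\circ X\circ Y\circ Z\circ\mathcal{E}^{-1}$, which is totally symmetric in $(X,Y,Z)$ by commutativity of $\circ$. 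Consequently total symmetry of the defect is independent of $V$, and one may assume $V=0$.

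In the $V=0$ case I expand $\nabla^{A}(*)(X,Y,Z)$ by writing $Y*Z = Y\circ Z\circ\mathcal{E}^{-1}$ and applying the $\circ$-Leibniz rule for $\nabla^{A}$ term by term. All the terms in which the factor $\mathcal{E}\circ\mathcal{E}^{-1}=e$ appears collapse, and the remaining residue can be packaged purely in terms of $\tilde{\nabla}(\circ)$ evaluated on vectors built from $X,Y,Z$ and $\mathcal{E}^{-1}$; the total symmetry of $\tilde{\nabla}(\circ)$ in its three arguments then delivers the required $X\leftrightarrow Y$ symmetry of the defect.

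\emph{Main obstacle.} The technical heart of the argument is the $V=0$ verification: one must carefully bookkeep the many terms produced by the Leibniz rule applied to $Y\circ Z\circ\mathcal{E}^{-1}$ and invoke the total symmetry of $\tilde{\nabla}(\circ)$ at the right moments to clear the residual asymmetric pieces. By comparison, the torsion computation and the $V$-independence observation are routine.
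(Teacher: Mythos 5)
Your overall architecture is sound and runs parallel to the paper's: the torsion computation that forces $A(Y)=\mathcal{E}\circ\tilde{\nabla}_{Y}(\mathcal{E}^{-1})+V\circ Y$ is essentially the paper's derivation of (\ref{inte})--(\ref{req1}), and your observation that replacing $V$ by $V+W$ changes the defect $\nabla^{A}(*)(X,Y,Z)$ by the totally symmetric term $-W\circ X\circ Y\circ Z\circ\mathcal{E}^{-1}$ is correct and legitimately reduces the sufficiency of $*$-compatibility to the case $V=0$.

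The gap is in the last step of sufficiency. You assert that after expanding $\nabla^{A}(*)(X,Y,Z)$ the residue "can be packaged purely in terms of $\tilde{\nabla}(\circ)$" and that the total symmetry of $\tilde{\nabla}(\circ)$ then finishes the argument. It cannot: nowhere in your proof do you use the hypothesis that $\mathcal{E}$ is an eventual identity, so if your argument were complete the theorem would hold for every invertible vector field $\mathcal{E}$, which is false. Concretely, with $A(Y)=\mathcal{E}\circ\tilde{\nabla}_{Y}(\mathcal{E}^{-1})$ the compatibility condition reduces (this is the paper's (\ref{cond-1}), which becomes (\ref{fin})) to
\[
\tilde{\nabla}_{Y\circ Z}(\mathcal{E}^{-1}) - \tilde{\nabla}_{Y}(\mathcal{E}^{-1})\circ Z - \tilde{\nabla}_{Z}(\mathcal{E}^{-1})\circ Y + \tilde{\nabla}_{e}(\mathcal{E}^{-1})\circ Y\circ Z
= \tilde{\nabla}_{\mathcal{E}^{-1}}(\circ)(Y,Z) + \tilde{\nabla}_{\mathcal{E}^{-1}}(e)\circ Y\circ Z,
\]
and when you use torsion-freeness to trade each term $\tilde{\nabla}_{(\cdot)}(\mathcal{E}^{-1})$ for $\tilde{\nabla}_{\mathcal{E}^{-1}}(\cdot)$ plus a Lie bracket, the bracket terms do not cancel among themselves: they assemble into $L_{\mathcal{E}^{-1}}(\circ)(Y,Z) - [e,\mathcal{E}^{-1}]\circ Y\circ Z$, whose vanishing is precisely the characterization (\ref{char}) of eventual identities applied to $\mathcal{E}^{-1}$. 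So the residual terms are not a $\tilde{\nabla}(\circ)$-expression killed by total symmetry; they are a Lie-derivative obstruction killed only by the eventual identity hypothesis, via Theorem \ref{previousmain} (i). As written, the sufficiency direction does not close.
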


We divide the proof of Theorem \ref{main} into two lemmas, as
follows.

\begin{lem}\label{initial-1} In the setting of Theorem
\ref{main}, the
connection $\nabla^{A}$ defined by (\ref{nabla}) is compatible
with $*$ if and only if
\begin{equation}\label{cond-1}
A(Y\circ Z) - A(Y)\circ Z - A(Z)\circ Y + A(e)\circ Y\circ Z =
{\mathcal E}\circ \left( \tilde{\nabla}_{Y}(\circ )({\mathcal
E}^{-1}, Z) + \tilde{\nabla}_{\mathcal E^{-1}}(e)\circ Y\circ
Z\right)
\end{equation}
for any vector fields $Y, Z\in {\mathcal X}(M).$
\end{lem}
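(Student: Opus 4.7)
\noindent\emph{Proof plan.} The strategy is direct computation. Compatibility of $\nabla^A$ with $*$ is the total symmetry of the $(3,1)$-tensor $\nabla^A(*)$, where (in analogy with (\ref{deriv-nabla}))
\[ \nabla^A_X(*)(Y,Z):=\nabla^A_X(Y*Z)-\nabla^A_X(Y)*Z-Y*\nabla^A_X(Z). \]
Since $*$ is commutative, $\nabla^A_X(*)(Y,Z)$ is automatically symmetric in $(Y,Z)$, and compatibility reduces to the identity
\[ \nabla^A_X(*)(Y,Z) = \nabla^A_Y(*)(X,Z) \qquad \text{for all } X,Y,Z. \]

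First I would expand $\nabla^A_X(*)(Y,Z)$ directly from (\ref{nabla}) using $Y*Z=\mathcal{E}^{-1}\circ Y\circ Z$. Applying the Leibniz identity
\[ \tilde{\nabla}_U(P\circ Q) = \tilde{\nabla}_U(P)\circ Q + P\circ\tilde{\nabla}_U(Q) + \tilde{\nabla}_U(\circ)(P,Q) \]
to unpack $\tilde{\nabla}_X(\mathcal{E}^{-1}\circ Y\circ Z\circ\mathcal{E}^{-1})$, and then using $\mathcal{E}\circ\mathcal{E}^{-1}=e$ together with commutativity/associativity of $\circ$, the ``pure'' covariant-derivative pieces telescope, leaving
\[ \nabla^A_X(*)(Y,Z) = \mathcal{E}\circ\tilde{\nabla}_X(\circ)(\mathcal{E}^{-1}\circ Y,\mathcal{E}^{-1}\circ Z) + A(Y\circ Z\circ\mathcal{E}^{-1})\circ X - A(Y)\circ X\circ Z\circ\mathcal{E}^{-1} - A(Z)\circ X\circ Y\circ\mathcal{E}^{-1}. \]

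Antisymmetrizing in $X,Y$, the $A(Z)\circ X\circ Y\circ\mathcal{E}^{-1}$ piece drops out by commutativity, and the remaining $A$-part regroups as $\Phi_A(Y,Z)\circ X-\Phi_A(X,Z)\circ Y$ with $\Phi_A(Y,Z):=A(Y\circ Z\circ\mathcal{E}^{-1})-A(Y)\circ Z\circ\mathcal{E}^{-1}$. To match the precise form of (\ref{cond-1}), I would specialize to $X=e$ and substitute $Z\mapsto\mathcal{E}\circ Z$ (both bijections of vector fields). The $A$-part then becomes exactly the LHS of (\ref{cond-1}). On the $\tilde{\nabla}$-side, total symmetry of $\tilde{\nabla}(\circ)$, combined with the subsidiary identity $\tilde{\nabla}_U(e)=U\circ\tilde{\nabla}_e(e)$ (derivable by differentiating $e\circ e=e$ and invoking total symmetry) and the relation $\tilde{\nabla}_e(e)=\mathcal{E}\circ\tilde{\nabla}_{\mathcal{E}^{-1}}(e)$ (obtained from $\mathcal{E}^{-1}\circ\mathcal{E}=e$), reduces the $\tilde{\nabla}(\circ)$-difference after the same specialization to $\mathcal{E}\circ\bigl(\tilde{\nabla}_Y(\circ)(\mathcal{E}^{-1},Z)+\tilde{\nabla}_{\mathcal{E}^{-1}}(e)\circ Y\circ Z\bigr)$, which is precisely the RHS of (\ref{cond-1}).

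Finally, to close both directions of the equivalence, I would observe that after use of total symmetry of $\tilde{\nabla}(\circ)$ the full antisymmetrized expression has the structural form $G(X,Y,Z)=\Phi(Y,Z)\circ X-\Phi(X,Z)\circ Y$ for a single bivariate $\Phi$. For any such $G$, the condition $G\equiv 0$ is equivalent to its specialization $G(X,e,Z)\equiv 0$: indeed, setting $Y=e$ forces $\Phi(X,Z)=\Phi(e,Z)\circ X$, whence $G(X,Y,Z)=\Phi(e,Z)\circ(Y\circ X-X\circ Y)=0$ by commutativity. The main technical obstacle is the second paragraph: showing that the $\tilde{\nabla}(\circ)$-contribution genuinely factors in the form $\Phi_\nabla(Y,Z)\circ X-\Phi_\nabla(X,Z)\circ Y$. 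This is not immediate since $X$ initially appears as the direction of differentiation rather than as a $\circ$-factor; making it multiplicative requires repeated use of the total symmetry of $\tilde{\nabla}(\circ)$ and the Leibniz rule, and is precisely where the unexpected $\tilde{\nabla}_{\mathcal{E}^{-1}}(e)\circ Y\circ Z$ correction enters (\ref{cond-1}).
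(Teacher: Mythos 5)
Your proposal is correct and follows essentially the same route as the paper: expand $\nabla^{A}(*)$ through the conjugated connection, antisymmetrize in $X,Y$, specialize to $X=e$ with $Z\mapsto\mathcal{E}\circ Z$, handle the $\tilde{\nabla}_{e}(\circ)$ term via $\tilde{\nabla}_{Z}(e)=\tilde{\nabla}_{e}(e)\circ Z$, and close the converse by showing the antisymmetrized expression factors as $\Phi(Y,Z)\circ X-\Phi(X,Z)\circ Y$ -- which is precisely the content of the paper's identity (\ref{last-rel}). The only cosmetic difference is that you package both directions into the single observation that such a decomposable expression vanishes identically iff its $Y=e$ specialization does.
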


\begin{proof}
Denote by $\tilde{\nabla}^{c}$ the connection conjugated to
$\tilde{\nabla}$ using $\mathcal E$, i.e.
\begin{equation}\label{nabla-A}
\tilde{\nabla}^{c}_{X}(Y) := {\mathcal E}\circ
\tilde{\nabla}_{X}({\mathcal E}^{-1}\circ Y), \quad\forall X, Y\in
{\mathcal X}(M).
\end{equation}
From (\ref{star-multiplication}) and (\ref{nabla-A}), for any $X,
Y, Z\in {\mathcal X}(M)$,
\begin{align*}
\tilde{\nabla}^{c}_{X}(*)(Y, Z) &= \tilde{\nabla}^{c}_{X} (Y* Z) -
\tilde{\nabla}^{c}_{X}(Y)*Z-
Y*\tilde{\nabla}^{c}_{X}(Z)\\
&= {\mathcal E}\circ \tilde{\nabla}_{X}({\mathcal E}^{-2}\circ
Y\circ Z) - \tilde{\nabla}_{X} ({\mathcal E}^{-1}\circ Y)\circ Z -
Y\circ \tilde{\nabla}_{X}({\mathcal E}^{-1}\circ Z)\\
& = {\mathcal E}\circ \tilde{\nabla}_{X}(\circ )({\mathcal
E}^{-1}\circ Y, {\mathcal E}^{-1}\circ Z),
\end{align*}
where we used ${\mathcal E}^{-2}\circ Y \circ Z = ({\mathcal
E}^{-1}\circ Y) \circ ({\mathcal E}^{-1}\circ Z)$ and
$$
\tilde{\nabla}_{X} ( {\mathcal E}^{-2}\circ Y \circ Z ) =
\tilde{\nabla}_{X}(\circ )( {\mathcal E}^{-1}\circ Y,  {\mathcal
E}^{-1}\circ Z) + \tilde{\nabla}_{X}({\mathcal E}^{-1}\circ
Y)\circ{\mathcal E}^{-1}\circ Z+ {\mathcal E}^{-1}\circ Y \circ
\tilde{\nabla}_{X}({\mathcal E}^{-1}\circ Z).
$$
Thus:
\begin{equation}\label{star}
\tilde{\nabla}^{c}_{X}(*)( Y, Z)= {\mathcal E}\circ
\tilde{\nabla}_{X}(\circ ) ({\mathcal E}^{-1}\circ Y, {\mathcal
E}^{-1} \circ Z),\quad \forall X, Y, Z\in {\mathcal X}(M).
\end{equation}
Using (\ref{nabla}) and (\ref{star}), we get
\begin{align*}
\nabla^{A}_{X}(*)(Y, Z)&=
{\mathcal E}\circ \tilde{\nabla}_{X}(\circ )({\mathcal E}^{-1}\circ Y,
{\mathcal E}^{-1}\circ Z)\\
&+ A(Y*Z) \circ X - (A(Y)\circ X)*Z - Y*(A(Z)\circ X)\\
&= {\mathcal E}\circ \tilde{\nabla}_{X}(\circ )({\mathcal E}^{-1}\circ Y,
{\mathcal E}^{-1}\circ Z)\\
&+ A(Y\circ Z\circ {\mathcal E}^{-1})\circ X -
A(Y)\circ X\circ Z\circ {\mathcal E}^{-1}
- A(Z)\circ X\circ Y\circ {\mathcal E}^{-1}.
\end{align*}
It follows that $\nabla^{A}(*)$ is totally symmetric if and only
if
\begin{align*}
{\mathcal E}\circ \tilde{\nabla}_{X}(\circ ) ({\mathcal E}^{-1}
\circ Y, {\mathcal E}^{-1}\circ Z)
+ A(Y\circ Z\circ {\mathcal E}^{-1})\circ X - A(Y)\circ X
\circ Z\circ {\mathcal E}^{-1}\\
= {\mathcal E}\circ \tilde{\nabla}_{Y}(\circ ) ({\mathcal E}^{-1} \circ X, {\mathcal E}^{-1}\circ Z)
+ A(X\circ Z\circ {\mathcal E}^{-1})\circ Y - A(X)\circ Y \circ Z\circ {\mathcal E}^{-1}.\\
\end{align*}
Multiplying the above relation with $\mathcal E^{-1}$ and
replacing $Z$ by $Z\circ{\mathcal E}$ we see that $\nabla^{A}(*)$
is totally symmetric if and only if
\begin{align}
\nonumber\tilde{\nabla}_{X}(\circ )({\mathcal E}^{-1}\circ Y, Z) -
\tilde{\nabla}_{Y}(\circ ) ({\mathcal E}^{-1}\circ X, Z)
&= {\mathcal E}^{-1}\circ \left( A(X\circ Z) - A(X)\circ Z\right)\circ Y\\
\label{nec}& - {\mathcal E}^{-1}\circ \left( A(Y\circ Z) -
A(Y)\circ Z\right)\circ X.
\end{align}
Letting in this expression $X:= e$ we get
\begin{equation}\label{ad-i}
A(Y\circ Z) - A(Y)\circ Z - A(Z)\circ Y + A(e)\circ Y\circ Z =
{\mathcal E}\circ \left( \tilde{\nabla}_{Y}(\circ )({\mathcal
E}^{-1}, Z) -\tilde{\nabla}_{e} (\circ )({\mathcal E}^{-1}\circ Y,
Z)\right) .
\end{equation}
On the other hand, for any vector field $Z\in {\mathcal X}(M)$,
\begin{equation}\label{sym}
\tilde{\nabla}_{Z}(e) = \tilde{\nabla}_{e}(e)\circ Z,
\end{equation}
which is a rewriting of the equality
$$
\tilde{\nabla}_{Z}(\circ ) (e,e) =\tilde{\nabla}_{e}(\circ ) (Z,
e).
$$
Using (\ref{sym}) and the total symmetry of $\tilde{\nabla}$ we
get
$$
\tilde{\nabla}_{e}(\circ )({\mathcal E}^{-1}\circ Y, Z) =
\tilde{\nabla}_{Z}(\circ )({\mathcal E}^{-1}\circ Y,e) = -
{\mathcal E}^{-1}\circ Y\circ \tilde{\nabla}_{Z}(e) =
-\tilde{\nabla}_{\mathcal E^{-1}}(e)\circ Y\circ Z.
$$
Combining this relation with (\ref{ad-i}) we get (\ref{cond-1}).
We proved that if $\nabla^{A}(*)$ is totally symmetric, then
(\ref{cond-1}) holds. Conversely, we now assume that
(\ref{cond-1}) holds and we show that $\nabla^{A}(*)$ is totally
symmetric. Using (\ref{cond-1}), relation (\ref{nec}) which
characterizes the symmetry of $\nabla^{A}(*)$ is equivalent to
\begin{align*}
&\tilde{\nabla}_{X}(\circ ) ({\mathcal E}^{-1}\circ Y, Z) -
\tilde{\nabla}_{Y}(\circ ) ({\mathcal E}^{-1}\circ X , Z) =
\tilde{\nabla}_{X}(\circ ) ({\mathcal E}^{-1}, Z)\circ Y -
\tilde{\nabla}_{Y}(\circ ) ({\mathcal E}^{-1}, Z)\circ X
\end{align*}
or to
\begin{equation}\label{last-rel}
\tilde{\nabla}_{Z}(\circ ) ({\mathcal E}^{-1}\circ Y, X)
-\tilde{\nabla}_{Z}(\circ ) ({\mathcal E}^{-1}\circ X, Y) =
\tilde{\nabla}_{Z}(\circ ) ({\mathcal E}^{-1}, X)\circ Y
-\tilde{\nabla}_{Z}(\circ ) ({\mathcal E}^{-1}, Y)\circ X,
\end{equation}
where we used the symmetry of $\tilde{\nabla}(\circ )$. Using the definition of
$\tilde{\nabla}(\circ )$, it may be checked that (\ref{last-rel})  holds. Our claim follows.

\end{proof}

The next lemma concludes the proof of Theorem \ref{main}.

\begin{lem}
In the setting of Theorem \ref{main}, the connection $\nabla^{A}$ defined
by (\ref{nabla}) is torsion-free and compatible with $*$ if and
only if
\begin{equation}\label{required}
A(Y) = {\mathcal E}\circ \tilde{\nabla}_{Y}({\mathcal E}^{-1}) +
V\circ Y, \quad\forall Y\in {\mathcal X}(M),
\end{equation}
where $V$ is an arbitrary vector field.
\end{lem}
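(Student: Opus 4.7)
My strategy is to first isolate the content of torsion-freeness and of compatibility separately: Lemma \ref{initial-1} has already translated compatibility of $\nabla^{A}$ with $*$ into the algebraic condition (\ref{cond-1}) on $A$, so I only need to handle torsion-freeness directly and then check that the ansatz (\ref{required}) automatically satisfies (\ref{cond-1}).

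For the first step I compute the torsion of $\nabla^{A}$ directly from definition (\ref{nabla}):
$$
T^{\nabla^{A}}(X,Y)=\mathcal{E}\circ\tilde{\nabla}_{X}(\mathcal{E}^{-1}\circ Y)-\mathcal{E}\circ\tilde{\nabla}_{Y}(\mathcal{E}^{-1}\circ X)+A(Y)\circ X-A(X)\circ Y-[X,Y].
$$
Expanding the first two terms by the Leibniz rule and using that $\tilde{\nabla}$ is torsion-free together with the total symmetry of $\tilde{\nabla}(\circ)$ (which gives $\tilde{\nabla}_{X}(\circ)(\mathcal{E}^{-1},Y)=\tilde{\nabla}_{Y}(\circ)(\mathcal{E}^{-1},X)$), the bracket cancels and one obtains
$$
T^{\nabla^{A}}(X,Y)=B(Y)\circ X-B(X)\circ Y,\qquad B(X):=A(X)-\mathcal{E}\circ\tilde{\nabla}_{X}(\mathcal{E}^{-1}).
$$
Setting $X=e$, the vanishing of the torsion forces $B(Y)=B(e)\circ Y$, so $B$ must be multiplication by the vector field $V:=B(e)$; conversely, commutativity of $\circ$ makes such a $B$ produce zero torsion. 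Hence torsion-freeness of $\nabla^{A}$ is equivalent to (\ref{required}).

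It remains to show that (\ref{required}) automatically yields the compatibility identity (\ref{cond-1}) from Lemma \ref{initial-1}. Substituting $A(Y)=A_{0}(Y)+V\circ Y$, with $A_{0}(Y):=\mathcal{E}\circ\tilde{\nabla}_{Y}(\mathcal{E}^{-1})$, into the left-hand side of (\ref{cond-1}), the contribution of the $V\circ Y$ piece collapses to zero by commutativity of $\circ$, so one is reduced to verifying (\ref{cond-1}) for $A_{0}$ alone. Using the Leibniz rule to expand $\tilde{\nabla}_{Y}(\circ)(\mathcal{E}^{-1},Z)$ on the right-hand side, then invoking the total symmetry of $\tilde{\nabla}(\circ)$ to rewrite $\tilde{\nabla}_{Y}(\circ)(\mathcal{E}^{-1},Z)=\tilde{\nabla}_{\mathcal{E}^{-1}}(\circ)(Y,Z)$ and using torsion-freeness of $\tilde{\nabla}$ to trade covariant derivatives for Lie brackets, the identity to be proved reduces to
$$
L_{\mathcal{E}^{-1}}(\circ)(Y,Z)=[e,\mathcal{E}^{-1}]\circ Y\circ Z.
$$
This is precisely the characterization (\ref{char}) of Theorem \ref{previousmain} applied to $\mathcal{E}^{-1}$, which is an eventual identity because the set of eventual identities is a subgroup of the invertible vector fields (as recorded in Section \ref{eventualidentity-sect}). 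This closes the argument.

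The main obstacle I expect is the final reduction: it is the only place where the eventual identity hypothesis is genuinely used, and several applications of the total symmetry of $\tilde{\nabla}(\circ)$ and of torsion-freeness have to be combined in the right order so that all the terms involving $\tilde{\nabla}\mathcal{E}^{-1}$ cancel and the remaining Lie derivative expression appears. Once this identification is made, the eventual identity condition (\ref{char}) closes the computation without further work.
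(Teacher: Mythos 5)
Your proposal is correct and follows essentially the same route as the paper: torsion-freeness is reduced (via the evaluation at $X=e$) to $A(Y)-\mathcal{E}\circ\tilde{\nabla}_{Y}(\mathcal{E}^{-1})$ being multiplication by a fixed vector field, and compatibility is then checked through the criterion (\ref{cond-1}) of Lemma \ref{initial-1}, with the final identity collapsing, after trading covariant derivatives for Lie brackets, to the eventual identity characterization (\ref{char}) applied to $\mathcal{E}^{-1}$. The only difference is cosmetic: you make explicit the reduction to $L_{\mathcal{E}^{-1}}(\circ)(Y,Z)=[e,\mathcal{E}^{-1}]\circ Y\circ Z$, which the paper leaves as "easy to check".
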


\begin{proof} Using the definition of $\nabla^{A}$, the
torsion-free property of $\tilde{\nabla}$ and the total symmetry
of $\tilde{\nabla}(\circ )$, it can be checked that $\nabla^{A}$
is torsion-free if and only if, for any vector fields $X, Y\in
{\mathcal X}(M)$,
\begin{equation}\label{inte}
A(X)\circ Y - A(Y)\circ X = {\mathcal E}
\circ \left( \tilde{\nabla}_{X}({\mathcal E}^{-1})\circ Y -
\tilde{\nabla}_{Y}({\mathcal E}^{-1})\circ X\right) ,
\end{equation}
or, equivalently,
\begin{equation}\label{req1}
A(Y) = {\mathcal E}\circ \tilde{\nabla}_{Y}({\mathcal E}^{-1}) +
\left( A(e) -{\mathcal E}\circ\tilde{\nabla}_{e} ({\mathcal
E}^{-1}) \right) \circ Y, \quad \forall Y\in {\mathcal X}(M).
\end{equation}
In particular, $A$ is of the form (\ref{required}), with
$$
V =  A(e) -{\mathcal E}\circ\tilde{\nabla}_{e}
({\mathcal E}^{-1}).
$$
We now check that the connection $\nabla^{A}$, with $A$ given by
(\ref{required}), is compatible with $*$. For this, we apply Lemma
\ref{initial-1}. Thus, we have to check that relation
(\ref{cond-1}) holds, with $A$ defined by (\ref{required}). When
$A$ is given by (\ref{required}), relation (\ref{cond-1}) becomes
\begin{align}
\nonumber&\tilde{\nabla}_{Y\circ Z} ({\mathcal E}^{-1}) -
\tilde{\nabla}_{Y} ({\mathcal E}^{-1}) \circ Z
-\tilde{\nabla}_{Z}({\mathcal E}^{-1})\circ Y +
\tilde{\nabla}_{e}({\mathcal E}^{-1}) \circ Y\circ Z\\
\label{fin}&= \tilde{\nabla}_{\mathcal E^{-1}}(\circ )(Y, Z)
+\tilde{\nabla}_{\mathcal E^{-1}} (e)\circ Y\circ Z.
\end{align}
From the definition of $\tilde{\nabla}(\circ )$, the second line of
(\ref{fin}) is equal to
$$
\tilde{\nabla}_{\mathcal E^{-1}}(Y\circ Z)
-\tilde{\nabla}_{\mathcal E^{-1}}(Y)\circ Z
-Y\circ\tilde{\nabla}_{\mathcal E^{-1}} (Z)
+\tilde{\nabla}_{\mathcal E^{-1}}(e)\circ Y\circ Z.
$$
With this remark it is easy to check that (\ref{fin}) holds (use  that
${\mathcal E}^{-1}$ is an eventual identity and $\tilde{\nabla}$
is torsion-free). Our claim follows.
\end{proof}

The proof of Theorem \ref{main} is now completed.

\subsection{The second structure connection of an
$F$-manifold}\label{Frobenius}

In analogy with Frobenius manifolds, we now define the notion of second structure connection
in the larger setting of $F$-manifolds.

\begin{defn}\label{def-second}
Let $(M, \circ , e, {\mathcal E},\tilde{\nabla} )$ be an
$F$-manifold with an eventual identity $\mathcal E$ and a
compatible, torsion-free connection $\tilde{\nabla}.$ The
connection
\begin{equation}\label{H}
\nabla^{\mathcal F}_{X}(Y):= {\mathcal E}\circ
\tilde{\nabla}_{X}({\mathcal E}^{-1}\circ Y )
-\tilde{\nabla}_{{\mathcal E}^{-1}\circ Y}({\mathcal E})\circ X
\end{equation}
is called the second structure connection of $(M, \circ , e,
{\mathcal E}, \tilde{\nabla})$.
\end{defn}

\begin{rem} {\rm
When the $F$-manifold $(M, \circ, e)$ underlies a Frobenius manifold
$(M, \circ , e, E, \tilde{g})$, ${\mathcal E}= E$ is the Euler field (assumed to be invertible) and $\tilde{\nabla}$ is
the first structure connection,  the Frobenius second structure connection $\widehat{\nabla}$
given by (\ref{precis})
and the $\mathcal F$-manifold second structure connection
$\nabla^{\mathcal F}$ given by Definition
\ref{def-second} differ by a (constant) multiple of the Higgs field
$\tilde{\mathcal C}_{X}(Y) = X\circ Y $ -  hence, they belong to the same pencil of connections
on $(M, \circ ,e).$ Both  are flat and compatible with the dual multiplication
$X*Y= X\circ Y\circ E^{-1}$.}
\end{rem}

Our main result from this section is the following.

\begin{prop}\label{structure}
The second structure connection $\nabla^{\mathcal F}$ of an
$F$-manifold $(M, \circ , e, {\mathcal E},\tilde{\nabla} )$ with
an eventual identity $\mathcal E$ and a compatible, torsion-free
connection $\tilde{\nabla}$, is torsion-free and compatible on the
dual $F$-manifold $(M, *, {\mathcal E}).$ It belongs to the family
of connections $\{ \nabla^{A}\}$ (given by (\ref{nabla}) and (\ref{adaugat-fin})),
determined in Theorem \ref{main}.
\end{prop}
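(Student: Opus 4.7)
The plan is to identify $\nabla^{\mathcal F}$ as a particular instance of the family $\nabla^{A}$ from Theorem \ref{main} and then verify that the corresponding $A$ fits the required template. Comparing (\ref{H}) with (\ref{nabla}), we read off
\[
A(Y) = -\tilde{\nabla}_{{\mathcal E}^{-1}\circ Y}({\mathcal E}).
\]
By Theorem \ref{main}, all that must be checked is that this tensor $A$ can be written in the form $A(Y) = {\mathcal E}\circ \tilde{\nabla}_{Y}({\mathcal E}^{-1}) + V\circ Y$ for some globally defined vector field $V$. Equivalently, introducing
\[
\Phi(Y) := \tilde{\nabla}_{{\mathcal E}^{-1}\circ Y}({\mathcal E}) + {\mathcal E}\circ \tilde{\nabla}_{Y}({\mathcal E}^{-1}),
\]
I need to show that $\Phi(Y) = W\circ Y$ for a vector field $W$ independent of $Y$; then one may set $V:=-W$.

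First I would rewrite $\tilde{\nabla}_{{\mathcal E}^{-1}\circ Y}({\mathcal E})$ using torsion-freeness as $\tilde{\nabla}_{{\mathcal E}}({\mathcal E}^{-1}\circ Y) + [{\mathcal E}^{-1}\circ Y,{\mathcal E}]$, and then apply the Leibniz rule for $\tilde{\nabla}$ (i.e.\ expand using the tensor field $\tilde{\nabla}(\circ)$) to the first summand. Using the total symmetry of $\tilde{\nabla}(\circ)$, the mixed term $\tilde{\nabla}_{{\mathcal E}}(\circ)({\mathcal E}^{-1},Y)$ converts into $\tilde{\nabla}_{Y}(\circ)({\mathcal E}^{-1},{\mathcal E})$, which by a further application of the Leibniz rule applied to $e={\mathcal E}\circ {\mathcal E}^{-1}$ can be expressed in terms of $\tilde{\nabla}_{Y}(e)$, $\tilde{\nabla}_{Y}({\mathcal E})$ and $\tilde{\nabla}_{Y}({\mathcal E}^{-1})$. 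The key identity (\ref{sym}), namely $\tilde{\nabla}_{Y}(e) = \tilde{\nabla}_{e}(e)\circ Y$, then collects the $\tilde{\nabla}_{Y}(e)$ contribution into a term that is manifestly $\circ$-proportional to $Y$.

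After this reduction, the remaining non-manifestly-tensorial terms are exactly the brackets $[{\mathcal E}^{-1}\circ Y,{\mathcal E}]$ and $[X,{\mathcal E}]\circ{\mathcal E}^{-1}$ combinations. At this point the eventual identity condition (\ref{char}) enters in the form
\[
[{\mathcal E}^{-1}\circ Y,{\mathcal E}] \;=\; [Y,{\mathcal E}]\circ {\mathcal E}^{-1} - Y\circ [{\mathcal E},{\mathcal E}^{-1}] - [e,{\mathcal E}]\circ Y\circ {\mathcal E}^{-1},
\]
obtained by specializing (\ref{char}) to the pair $(Y,{\mathcal E}^{-1})$. Substituting this identity, the $[Y,{\mathcal E}]$-pieces cancel and what remains collects into $\bigl(\tilde{\nabla}_{e}(e) + \tilde{\nabla}_{{\mathcal E}}({\mathcal E}^{-1}) - [{\mathcal E},{\mathcal E}^{-1}] - [e,{\mathcal E}]\circ{\mathcal E}^{-1}\bigr)\circ Y$, so $\Phi(Y) = W\circ Y$ with $W$ being precisely this expression.

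The main obstacle is the bookkeeping in the previous step: several applications of the Leibniz rule, two distinct invocations of the total symmetry of $\tilde{\nabla}(\circ)$, and one use of torsion-freeness produce many terms, and it is not a priori obvious that the non-$\circ$-tensorial contributions cancel. The cancellation is forced exactly by the eventual identity condition, which is the only place where the hypothesis on $\mathcal E$ (as opposed to a generic invertible vector field) intervenes. Once $\Phi(Y) = W\circ Y$ is established, the conclusion that $\nabla^{\mathcal F}$ is torsion-free, compatible with $*$, and lies in the family $\{\nabla^{A}\}$ follows directly from Theorem \ref{main}.
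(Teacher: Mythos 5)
Your proposal is correct and follows essentially the same route as the paper: the claim that $\tilde{\nabla}_{{\mathcal E}^{-1}\circ Y}({\mathcal E})+{\mathcal E}\circ\tilde{\nabla}_{Y}({\mathcal E}^{-1})$ equals $W\circ Y$ for a fixed vector field $W$ is exactly the paper's Lemma \ref{lem-aux}, and your computational steps (torsion-freeness to swap the arguments, Leibniz plus total symmetry of $\tilde{\nabla}(\circ)$, identity (\ref{sym}), and the eventual identity condition applied to the Lie bracket $[{\mathcal E}^{-1}\circ Y,{\mathcal E}]$) mirror the paper's proof of that lemma. Your expression for $W$ agrees with the paper's, after rewriting $\tilde{\nabla}_{\mathcal E}({\mathcal E}^{-1})-[{\mathcal E},{\mathcal E}^{-1}]=\tilde{\nabla}_{{\mathcal E}^{-1}}({\mathcal E})$ and using (\ref{cov4}).
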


Proposition \ref{structure} is a consequence of Theorem
\ref{main}, the definition of $\nabla^{\mathcal F}$  and the
following Lemma.

\begin{lem}\label{lem-aux}
Let $(M, \circ , e, {\mathcal E},\tilde{\nabla} )$
be an $F$-manifold with an eventual identity $\mathcal E$ and a
compatible, torsion-free connection $\tilde{\nabla}.$
Then, for any $X\in {\mathcal X}(M)$,
\begin{equation}\label{aux}
\tilde{\nabla}_{{\mathcal E}^{-1}\circ X}({\mathcal E}) = -
{\mathcal E}\circ \tilde{\nabla}_{X}({\mathcal E}^{-1}) +\left(
\frac{1}{2}\left(\tilde{\nabla}_{\mathcal E^{-1}}( {\mathcal E}) +
\tilde{\nabla}_{\mathcal E}({\mathcal
E}^{-1})\right)+\tilde{\nabla}_{e}(e)\right)\circ X.
\end{equation}
\end{lem}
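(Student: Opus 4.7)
The target formula decomposes $\tilde{\nabla}_{{\mathcal E}^{-1}\circ X}({\mathcal E})$ as $-{\mathcal E}\circ \tilde{\nabla}_{X}({\mathcal E}^{-1})$ plus a correction proportional to $X$. My plan is to produce the $\tilde{\nabla}_{X}({\mathcal E}^{-1})$ term by swapping the derivation direction via torsion-freeness, applying Leibniz to the product ${\mathcal E}^{-1}\circ X$, and then using the total symmetry of $\tilde{\nabla}(\circ)$; the eventual-identity characterization (\ref{char}) and the power identity (\ref{e-m-n}) will then be used to absorb the residual Lie-bracket terms.

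For the first stage, torsion-freeness gives $\tilde{\nabla}_{{\mathcal E}^{-1}\circ X}({\mathcal E}) = \tilde{\nabla}_{{\mathcal E}}({\mathcal E}^{-1}\circ X) + [{\mathcal E}^{-1}\circ X, {\mathcal E}]$, and expanding the first term by Leibniz yields $\tilde{\nabla}_{{\mathcal E}}(\circ)({\mathcal E}^{-1}, X) + \tilde{\nabla}_{{\mathcal E}}({\mathcal E}^{-1})\circ X + {\mathcal E}^{-1}\circ \tilde{\nabla}_{{\mathcal E}}(X)$. By total symmetry $\tilde{\nabla}_{{\mathcal E}}(\circ)({\mathcal E}^{-1}, X) = \tilde{\nabla}_{X}(\circ)({\mathcal E}^{-1}, {\mathcal E})$, and expanding the right-hand side using ${\mathcal E}^{-1}\circ {\mathcal E} = e$ together with (\ref{sym}) produces $\tilde{\nabla}_{e}(e)\circ X - {\mathcal E}\circ \tilde{\nabla}_{X}({\mathcal E}^{-1}) - {\mathcal E}^{-1}\circ \tilde{\nabla}_{X}({\mathcal E})$. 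A second use of torsion-freeness then collapses ${\mathcal E}^{-1}\circ(\tilde{\nabla}_{{\mathcal E}}(X) - \tilde{\nabla}_{X}({\mathcal E}))$ into ${\mathcal E}^{-1}\circ[{\mathcal E}, X]$, leaving
$$
\tilde{\nabla}_{{\mathcal E}^{-1}\circ X}({\mathcal E}) = -{\mathcal E}\circ \tilde{\nabla}_{X}({\mathcal E}^{-1}) + \bigl(\tilde{\nabla}_{{\mathcal E}}({\mathcal E}^{-1}) + \tilde{\nabla}_{e}(e)\bigr)\circ X + {\mathcal E}^{-1}\circ[{\mathcal E}, X] + [{\mathcal E}^{-1}\circ X, {\mathcal E}].
$$

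For the second stage it remains to show that the bracket combination equals $\frac{1}{2}(\tilde{\nabla}_{{\mathcal E}^{-1}}({\mathcal E}) - \tilde{\nabla}_{{\mathcal E}}({\mathcal E}^{-1}))\circ X$. Applying (\ref{char}) with the pair $(X, {\mathcal E}^{-1})$ expresses $[{\mathcal E}, X\circ {\mathcal E}^{-1}]$ in terms of $[{\mathcal E}, X]\circ {\mathcal E}^{-1}$, $X\circ[{\mathcal E}, {\mathcal E}^{-1}]$ and $[e, {\mathcal E}]\circ X\circ {\mathcal E}^{-1}$. Rearranging shows ${\mathcal E}^{-1}\circ[{\mathcal E}, X] + [{\mathcal E}^{-1}\circ X, {\mathcal E}] = -X\circ [{\mathcal E}, {\mathcal E}^{-1}] - [e, {\mathcal E}]\circ X\circ {\mathcal E}^{-1}$; inserting $[{\mathcal E}, {\mathcal E}^{-1}] = -2{\mathcal E}^{-1}\circ[e, {\mathcal E}]$ from (\ref{e-m-n}) collapses this to ${\mathcal E}^{-1}\circ[e, {\mathcal E}]\circ X$. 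A second use of (\ref{e-m-n}) identifies ${\mathcal E}^{-1}\circ[e, {\mathcal E}] = \frac{1}{2}[{\mathcal E}^{-1}, {\mathcal E}]$, and torsion-freeness rewrites the right-hand side as $\frac{1}{2}(\tilde{\nabla}_{{\mathcal E}^{-1}}({\mathcal E}) - \tilde{\nabla}_{{\mathcal E}}({\mathcal E}^{-1}))$. Combining with the $\tilde{\nabla}_{{\mathcal E}}({\mathcal E}^{-1})\circ X$ term from the first stage produces precisely the symmetric average appearing on the right-hand side of (\ref{aux}).

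The calculation is essentially bookkeeping; the only conceptual content lies in observing that the power identity (\ref{e-m-n}), a consequence of the eventual-identity property of ${\mathcal E}$, converts the a priori asymmetric bracket combination ${\mathcal E}^{-1}\circ[{\mathcal E}, X] + [{\mathcal E}^{-1}\circ X, {\mathcal E}]$ into a symmetric object, thereby symmetrizing the otherwise one-sided $\tilde{\nabla}_{{\mathcal E}}({\mathcal E}^{-1})\circ X$ contribution. Beyond tracking signs carefully through the repeated Leibniz and torsion-free manipulations, I do not expect any substantive obstacle.
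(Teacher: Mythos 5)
Your argument is correct and follows essentially the same route as the paper's proof: both swap the derivative direction via torsion-freeness, expand $\tilde{\nabla}_{\mathcal E}({\mathcal E}^{-1}\circ X)$ by Leibniz, use total symmetry of $\tilde{\nabla}(\circ)$ together with (\ref{sym}) to produce the $-{\mathcal E}\circ\tilde{\nabla}_{X}({\mathcal E}^{-1})$ term, and then invoke the eventual-identity characterization and the identity ${\mathcal E}^{-1}\circ[e,{\mathcal E}]=\tfrac12[{\mathcal E}^{-1},{\mathcal E}]$ from (\ref{e-m-n}) to symmetrize the remainder. The only cosmetic difference is that the paper applies (\ref{char}) to $L_{{\mathcal E}^{-1}\circ X}({\mathcal E})$ at the outset, whereas you defer it to the final bookkeeping step.
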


\begin{proof}
Using that $\mathcal E$ is an eventual identity and
$\tilde{\nabla}$ is torsion-free, we get:
\begin{align}
\nonumber\tilde{\nabla}_{\mathcal E^{-1}\circ X} ({\mathcal E})&=
\tilde{\nabla}_{\mathcal E} ({\mathcal E}^{-1}\circ X) +
L_{{\mathcal E}^{-1}\circ X}({\mathcal E}) \\
\nonumber& = \tilde{\nabla}_{\mathcal E} ({\mathcal E}^{-1}\circ
X) - [{\mathcal E}, {\mathcal E}^{-1}] \circ X - {\mathcal
E}^{-1}\circ [{\mathcal E}, X] - [e, {\mathcal E}]\circ {\mathcal
E}^{-1} \circ
X\\
\nonumber&= \tilde{\nabla}_{\mathcal E}({\mathcal E}^{-1}) \circ X
+{\mathcal E}^{-1}\circ \tilde{\nabla}_{\mathcal E}(X)
+\tilde{\nabla}_{\mathcal E}(\circ )({\mathcal E}^{-1}, X)\\
\label{ajut-s}& - [{\mathcal E}, {\mathcal E}^{-1}]\circ X -
{\mathcal E}^{-1}\circ [{\mathcal E},X] -  [e, {\mathcal E}]\circ
{\mathcal E}^{-1}\circ X.
\end{align}
On the other hand, using the total symmetry of
$\tilde{\nabla}(\circ )$ and (\ref{sym}),
\begin{equation}\label{cov1}
\tilde{\nabla}_{\mathcal E}(\circ ) ({\mathcal E}^{-1}, X)=
\tilde{\nabla}_{X}(\circ ) ({\mathcal E}, {\mathcal E}^{-1}) =
\tilde{\nabla}_{e}(e)\circ X -\tilde{\nabla}_{X}({\mathcal E})
\circ {\mathcal E}^{-1} - {\mathcal
E}\circ\tilde{\nabla}_{X}({\mathcal E}^{-1}).
\end{equation}
From (\ref{ajut-s}), (\ref{cov1}), and the torsion-free property
of $\tilde{\nabla}$, we get
\begin{equation}\label{cov3}
\tilde{\nabla}_{\mathcal E^{-1}\circ X} ({\mathcal E})= -
{\mathcal E}\circ\tilde{\nabla}_{X}({\mathcal E}^{-1}) + \left(
\tilde{\nabla}_{{\mathcal E}^{-1}}({\mathcal E}) - {\mathcal
E}^{-1}\circ [e, {\mathcal E}] +\tilde{\nabla}_{e}(e)\right) \circ
X.
\end{equation}

On the other hand, $\mathcal E$ is an eventual identity and
relation (\ref{e-m-n})
with $n=-1$ and $m=1$ gives
\begin{equation}\label{cov4}
{\mathcal E}^{-1}\circ [e, {\mathcal E}]=\frac{1}{2} [{\mathcal
E}^{-1}, {\mathcal E}].
\end{equation}
Relation (\ref{cov3}), (\ref{cov4}) and again the torsion-free
property of $\tilde{\nabla}$ imply our claim.
\end{proof}

In the next sections, we shall use Theorem \ref{main} in the following equivalent form:

\begin{cor}\label{main-cor}
Let $(M, \circ , e, {\mathcal E}, \tilde{\nabla})$ be an
$F$-manifold with an eventual identity $\mathcal E$ and a
compatible, torsion-free connection $\tilde{\nabla}.$ Any
torsion-free connection compatible with the dual multiplication
$*$ and of the form (\ref{nabla}) is given by
$$
\nabla^{W}_{X}(Y):= {\mathcal E}\circ \tilde{\nabla}_{X}({\mathcal
E}^{-1}\circ Y) -\tilde{\nabla}_{{\mathcal E}^{-1}\circ
Y}({\mathcal E})\circ X +W* X* Y
$$
where $W$ is an arbitrary vector field.
\end{cor}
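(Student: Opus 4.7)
The plan is to deduce the corollary as a purely algebraic restatement of Theorem \ref{main}, using Lemma \ref{lem-aux} as the only nontrivial ingredient. By Theorem \ref{main} together with (\ref{adaugat-fin}), any torsion-free connection compatible with $*$ and of the form (\ref{nabla}) has the shape
$$
\nabla^{A}_{X}Y = {\mathcal E}\circ \tilde{\nabla}_{X}({\mathcal E}^{-1}\circ Y) + {\mathcal E}\circ \tilde{\nabla}_{Y}({\mathcal E}^{-1})\circ X + V\circ X\circ Y,
$$
for an arbitrary vector field $V$. So the task reduces to rewriting the middle summand in the form $-\tilde{\nabla}_{{\mathcal E}^{-1}\circ Y}({\mathcal E})\circ X$ plus a term absorbable into $W*X*Y$.

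First, I would apply Lemma \ref{lem-aux} to express
$$
{\mathcal E}\circ \tilde{\nabla}_{Y}({\mathcal E}^{-1}) = -\tilde{\nabla}_{{\mathcal E}^{-1}\circ Y}({\mathcal E}) + U\circ Y,
$$
where
$$
U := \tfrac{1}{2}\bigl(\tilde{\nabla}_{\mathcal E^{-1}}({\mathcal E}) + \tilde{\nabla}_{\mathcal E}({\mathcal E}^{-1})\bigr) + \tilde{\nabla}_{e}(e).
$$
Multiplying by $X$ and substituting into the displayed formula for $\nabla^{A}$ yields
$$
\nabla^{A}_{X}Y = {\mathcal E}\circ \tilde{\nabla}_{X}({\mathcal E}^{-1}\circ Y) - \tilde{\nabla}_{{\mathcal E}^{-1}\circ Y}({\mathcal E})\circ X + (U+V)\circ X\circ Y.
$$

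Next, since $W*X*Y = W\circ X\circ Y\circ {\mathcal E}^{-2}$, I would define $W := {\mathcal E}^{2}\circ (U+V)$, which recovers exactly the expression for $\nabla^{W}$ in the corollary. Conversely, given any $W$, setting $V := W\circ {\mathcal E}^{-2} - U$ produces a connection in the family of Theorem \ref{main} equal to $\nabla^{W}$. The correspondence $V\leftrightarrow W$ is therefore a bijection between the two parametrizations, and the corollary follows.

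There is no real obstacle in this argument, since Lemma \ref{lem-aux} is already available and the rest is a direct algebraic manipulation; the only point to verify is that the map $V\mapsto W={\mathcal E}^{2}\circ(U+V)$ is surjective onto smooth vector fields, which is automatic because $U$ is a fixed vector field and ${\mathcal E}^{2}$ is invertible.
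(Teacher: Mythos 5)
Your argument is correct and is precisely the route the paper intends: the paper's proof is the one-line remark ``Trivial, from Theorem \ref{main}, Lemma \ref{lem-aux} and the definition of $*$,'' and your substitution of Lemma \ref{lem-aux} into the form (\ref{nabla})--(\ref{adaugat-fin}), together with the bijection $W={\mathcal E}^{2}\circ(U+V)$ coming from $W*X*Y=W\circ X\circ Y\circ{\mathcal E}^{-2}$, is exactly what that remark leaves to the reader. Nothing is missing.
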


\begin{proof} Trivial, from Theorem \ref{main}, Lemma \ref{lem-aux} and the definition of $*$.
\end{proof}

\section{Duality for $F$-manifolds with special families of
connections}\label{duality-sect}

We now interpret Theorem \ref{main} as a
duality for $F$-manifolds with eventual identities and so called
special families of connections
(see Section \ref{special-dist}). Then we discuss a class of special families of connections and the dual families
(see Section \ref{class-dist}).

\subsection{Special families of connections and duality}\label{special-dist}

\begin{defn}\label{special} A family of connections
$\tilde{\mathcal S}$
on an
$F$-manifold $(M, \circ , e)$ is called special if
$$
\tilde{\mathcal S}= \{ \tilde{\nabla}^{V},\quad V\in {\mathcal X}(M)\}
$$
where
\begin{equation}\label{n-v}
\tilde{\nabla}^{V}_{X}(Y):= \tilde{\nabla}_{X}(Y) + V\circ X\circ
Y,\quad\forall X, Y\in {\mathcal X}(M),
\end{equation}
and $\tilde{\nabla}$ is torsion-free and  compatible with $\circ
.$

\end{defn}

\begin{rem}\label{spec1}{\rm It is easy to check that if
$\tilde{\nabla}^{V}$ and $\tilde{\nabla}$ are any two connections
related by (\ref{n-v}), then
$$
\tilde{\nabla}^{V}_{X}(\circ )(Y, Z) =
\tilde{\nabla}_{X}(\circ )(Y, Z)  - V\circ X\circ Y\circ Z,\quad
\forall X, Y, Z\in {\mathcal X}(M).
$$
Thus, $\tilde{\nabla}^{V}$ is compatible with $\circ$ if and
only if $\tilde{\nabla}$ is compatible with $\circ$. Moreover,
$\tilde{\nabla}^{V}$ is torsion-free if and only if
$\tilde{\nabla}$ is torsion-free.
It follows that all connections from a special family are torsion-free
and compatible with the multiplication of the $F$-manifold.}
\end{rem}

In the language of special families of connections, Theorem
\ref{main} can be reformulated as follows:

\begin{thm}\label{main-duality}
Let $(M, \circ , e ,{\mathcal E}, \tilde{\mathcal S})$ be an
$F$-manifold with an eventual identity $\mathcal E$ and a special
family of connections $\tilde{\mathcal S }.$ Choose any
$\tilde{\nabla}\in \tilde{\mathcal S}$ and define the family of
connections
$$
{\mathcal D}_{\mathcal E}(\tilde{\mathcal S}) = {\mathcal
S}:=\{{\nabla}^{W}, W\in {\mathcal X}(M)\}
$$
where
\begin{equation}\label{W-x}
\nabla^{W}_{X}(Y)= {\mathcal E}\circ \tilde{\nabla}_{X}({\mathcal
E}^{-1}\circ Y ) -\tilde{\nabla}_{{\mathcal E}^{-1}\circ
Y}({\mathcal E})\circ X+ W*X*Y,\quad\forall X,
 Y\in {\mathcal X}(M)
\end{equation}
and $*$ is the dual multiplication
$$
X*Y = X\circ Y \circ {\mathcal E}^{-1}.
$$
Then ${\mathcal S}$ is special on the dual $F$-manifold $(M, *,
{\mathcal E})$ and the map
\begin{equation}\label{dual-family}
(M, \circ , e,{\mathcal E}, \tilde{\mathcal S}) \rightarrow (M, *,
{\mathcal E}, e,{\mathcal S})
\end{equation}
is an involution on the set of $F$-manifolds with eventual
identities and special families of connections.

\end{thm}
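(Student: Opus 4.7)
The plan is to rely heavily on Theorem \ref{main} (in the compact form of Corollary \ref{main-cor}), which already supplies the torsion-freeness and compatibility with $*$ for every member of the candidate family $\mathcal{S}$. What remains is threefold: (i) verifying that $\mathcal{S}$ does not depend on the choice of representative $\tilde{\nabla}\in\tilde{\mathcal{S}}$; (ii) recognising $\mathcal{S}$ as a special family on $(M,*,\mathcal{E})$ in the sense of Definition \ref{special}; and (iii) showing that applying the construction $\mathcal{D}_{e}$ to the dual data returns the original family. For (i), I would substitute $\tilde{\nabla}^{V}=\tilde{\nabla}+V\circ\cdot\circ\cdot$ into the ``base'' expression $\mathcal{E}\circ\tilde{\nabla}_{X}(\mathcal{E}^{-1}\circ Y)-\tilde{\nabla}_{\mathcal{E}^{-1}\circ Y}(\mathcal{E})\circ X$ and observe that the two extra contributions, $V\circ X\circ Y$ and $V\circ Y\circ X$, cancel by commutativity of $\circ$; hence the bare part of $\nabla^{W}$ is intrinsic to $\tilde{\mathcal{S}}$.

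Step (ii) is immediate from the explicit form (\ref{W-x}): writing $\nabla^{W}_{X}Y=\nabla^{0}_{X}Y+W*X*Y$ with $\nabla^{0}$ torsion-free and compatible with $*$ (Corollary \ref{main-cor}) exhibits $\mathcal{S}$ as a special family in the sense of Definition \ref{special}, with Remark \ref{spec1} confirming that all members are automatically torsion-free and compatible. For step (iii), I would first use Theorem \ref{previousmain}(ii) to note that $e$ is an eventual identity on $(M,*,\mathcal{E})$; a short computation shows that the $*$-inverse of $e$ equals $\mathcal{E}^{2}$, so that the doubly-dual multiplication $X*Y*\mathcal{E}^{2}$ coincides with $\circ$. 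I would then apply $\mathcal{D}_{e}$ to $(M,*,\mathcal{E},e,\mathcal{S})$, taking $\nabla^{0}$ as representative: the substitutions $(e)^{-1,*}*Y=\mathcal{E}\circ Y$ and $e*A=A\circ\mathcal{E}^{-1}$ reduce the right-hand side of (\ref{W-x}) to an explicit expression in $\nabla^{0}$, which I would then expand using the definition of $\nabla^{0}$.

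The resulting base term collapses to $\tilde{\nabla}_{X}Y$ plus contributions involving $\tilde{\nabla}_{Y}(\mathcal{E})$ and $\tilde{\nabla}_{\mathcal{E}\circ Y}(\mathcal{E}^{-1})$. The key step will be to invoke Lemma \ref{lem-aux} with the roles of $\mathcal{E}$ and $\mathcal{E}^{-1}$ interchanged (using that $\mathcal{E}^{-1}$ is itself an eventual identity, since the eventual identities form a subgroup) to rewrite $\tilde{\nabla}_{\mathcal{E}\circ Y}(\mathcal{E}^{-1})$ as $-\mathcal{E}^{-1}\circ\tilde{\nabla}_{Y}(\mathcal{E})$ plus a correction of the form $(\,\cdot\,)\circ Y$ with coefficient independent of $Y$. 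The $\tilde{\nabla}_{Y}(\mathcal{E})$-contributions then cancel by commutativity, leaving $\tilde{\nabla}_{X}Y+V'\circ X\circ Y$ for a fixed vector field $V'$ depending only on $\tilde{\nabla}$ and $\mathcal{E}$; absorbing $V'$ into the free parameter $W$ from the second application of (\ref{W-x}) shows that the doubly-dualised family coincides with $\tilde{\mathcal{S}}$. The hard part is precisely this last cancellation, which depends delicately on the symmetric use of Lemma \ref{lem-aux}, the total symmetry of $\tilde{\nabla}(\circ)$, and the torsion-free property; everything else is routine book-keeping.
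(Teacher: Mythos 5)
Your proposal is correct and follows essentially the same route as the paper's proof: specialness and torsion-freeness come from Corollary \ref{main-cor}, well-definedness from the direct cancellation of the $V\circ X\circ Y$ terms, and the involution property from Theorem \ref{previousmain}(ii) together with the double-dual computation in which Lemma \ref{lem-aux} is applied with $\mathcal{E}$ and $\mathcal{E}^{-1}$ interchanged. The final cancellation you flag as the delicate point is exactly the step the paper carries out, yielding the correction field $V'=\frac{1}{2}[\mathcal{E},\mathcal{E}^{-1}]+\tilde{\nabla}_{e}(e)$, which is then absorbed into the free parameter of the special family.
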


\begin{proof}
The family $\mathcal S$ is well defined, i.e. independent of the
choice of connection $\tilde{\nabla}$ from $\tilde{\mathcal S}$,
and, from Corollary \ref{main-cor}, it is a special family on $(M,
*, {\mathcal E}).$ It remains to prove that the map
(\ref{dual-family}) is an involution. Recall, from
Theorem \ref{previousmain}, that the map
$$
(M, \circ , e, {\mathcal E})\rightarrow (M, *, {\mathcal E}, e)
$$
is an involution on the set of $F$-manifolds with eventual
identities. Thus, we only need to prove the statement about the
special families. This reduces to showing that for (any) $\tilde{\nabla}\in \tilde{\mathcal S}$,
\begin{equation}\label{r}
\tilde{\nabla}_{X}(Y) = e *\nabla^{W}_{X} (e^{-1,*}* Y) -
\nabla^{W}_{e^{-1,*}*Y} (e) * X + V \circ  X\circ Y,
\end{equation}
where $\nabla^{W}$ is given by (\ref{W-x}), $V$ is a vector field which needs to be determined and
$e^{-1,*}= {\mathcal E}^{2}$ is the inverse of the eventual
identity $e$ on the $F$-manifold $(M, *, {\mathcal E})$. From definitions, it is straightforward to check
that
\begin{align*}
e*\nabla^{W}_{X}(e^{-1,*}*Y) -\nabla^{W}_{e^{-1,*}*Y}(e)*X &=
\tilde{\nabla}_{X}(Y) -\left( \tilde{\nabla}_{Y}({\mathcal
E})\circ {\mathcal E}^{-1} +\tilde{\nabla}_{\mathcal E\circ Y}
({\mathcal
E}^{-1})\right)\circ X\\
& +\tilde{\nabla}_{\mathcal E^{-1}}({\mathcal E}) \circ X \circ Y,
\end{align*}
for any $X, Y\in {\mathcal X}(M)$. Moreover, from Lemma
\ref{lem-aux} with $\mathcal E$ replaced by ${\mathcal E}^{-1}$,
$$
\tilde{\nabla}_{Y} ({\mathcal E }) \circ {\mathcal E}^{-1} +
\tilde{\nabla}_{\mathcal E\circ Y} ({\mathcal E}^{-1})  = \left(
\tilde{\nabla}_{e}(e) +\frac{1}{2} \left( \tilde{\nabla}_{\mathcal
E} ({\mathcal E}^{-1}) +\tilde{\nabla}_{\mathcal E^{-1}}
({\mathcal E})\right)\right) \circ Y.
$$
We get
\begin{equation}\label{expression}
e*\nabla^{W}_{X}(e^{-1,*}*Y) -\nabla^{W}_{e^{-1,*}*Y}(e)*X=
\tilde{\nabla}_{X}(Y) -\left(\frac{1}{2}[{\mathcal E}, {\mathcal E}^{-1}]
+\tilde{\nabla}_{e}(e)\right) \circ X\circ Y.
\end{equation}
We deduce that (\ref{r}) holds, with
$$
V:=\frac{1}{2} [{\mathcal E}, {\mathcal E}^{-1}]
+\tilde{\nabla}_{e }(e).
$$
Our claim follows.

\end{proof}

\subsection{A class of special families of connections}\label{class-dist}

In the following example we discuss a class of special families of connections and how they behave under the duality.

\begin{ex}{\rm \label{example-dist}Let $(M, \circ , e)$ be an $F$-manifold of dimension
$n$ and $\tilde{X}_{0}$
a vector field such that the system $\{ \tilde{X}_{0},
\tilde{X}_{0}^{2}, \cdots , \tilde{X}_{0}^{n}\}$ is a frame of
$TM.$ Assume that there is a torsion-free, compatible connection
$\tilde{\nabla}$ on $(M, \circ , e)$ satisfying
\begin{equation}\label{l}
\tilde{\nabla}_{Y}(\tilde{X}_{0})\circ Z =
\tilde{\nabla}_{Z}(\tilde{X}_{0}) \circ Y,\quad\forall Y, Z\in
{\mathcal X}(M).
\end{equation}
The following facts hold:\\

i) The set $\tilde{\mathcal S}$ of all torsion-free, compatible connections
on $(M, \circ , e)$, satisfying (\ref{l}), is special.\\

ii) The image ${\mathcal S}:= {\mathcal D}_{\mathcal
E}(\tilde{\mathcal S})$ of $\tilde{\mathcal S}$ through the
duality defined by an eventual identity $\mathcal E$ on $(M, \circ
, e)$ is the special family of compatible, torsion-free
connections $\nabla$ on the dual $F$-manifold $(M, *, {\mathcal
E})$, satisfying
\begin{equation}\label{dual-1}
\nabla_{Y}({\mathcal E}\circ \tilde{X}_{0})* Z =
\nabla_{Z}({\mathcal E}\circ \tilde{X}_{0})* Y,\quad \forall Y,
Z\in {\mathcal X}(M).
\end{equation}}

\end{ex}

\begin{proof}
Suppose that $\tilde{\nabla}$ is a torsion-free connection,
compatible with $\circ$ and satisfying (\ref{l}), and let
$$
\tilde{\nabla}^{B}_{X}(Y) = \tilde{\nabla}_{X}(Y) + B_{X}(Y),\quad
\forall X, Y\in {\mathcal X}(M)
$$
be another connection with these properties, where
$B\in\Omega^{1}(M, \mathrm{End}TM).$ Since
$\tilde{\nabla}$ and $\tilde{\nabla}^{B}$ are
torsion-free,
\begin{equation}\label{sym-cond}
B_{X}(Y) = B_{Y}(X),\quad \forall X, Y\in {\mathcal X}(M).
\end{equation}
Since $\tilde{\nabla}(\circ )$ is totally symmetric, the total
symmetry of $\tilde{\nabla}^{B}(\circ )$ is equivalent to
\begin{equation}\label{ad-o}
B_{X}(Y\circ Z) - B_{X}(Z)\circ Y = B_{Y}(X\circ Z ) -
B_{Y}(Z)\circ X,
\end{equation}
where we used (\ref{sym-cond}).
Letting $Z:=e$ in the above relation we get
\begin{equation}\label{adit}
B_{Y}(e) = B_{e}(Y) = V\circ Y,\quad \forall Y\in {\mathcal X}(M),
\end{equation}
where $V:= B_{e}(e).$ On the other hand, since  (\ref{l}) is
satisfied by both $\tilde{\nabla}$ and $\tilde{\nabla}^{B}$,
\begin{equation}\label{b}
B_{Y}(\tilde{X}_{0}) \circ Z = B_{Z}(\tilde{X}_{0})\circ
Y,\quad\forall Y, Z\in {\mathcal X}(M).
\end{equation}
We obtain:
\begin{equation}\label{adit-2}
 B_{\tilde{X}_{0}}(Y)= B_{Y}(\tilde{X}_{0})= B_{e}(\tilde{X}_{0})\circ Y=
V\circ\tilde{X}_{0}\circ Y,\quad \forall Y\in {\mathcal X}(M),
\end{equation}
where in the first equality we used (\ref{sym-cond}), in the second equality we used (\ref{b}) and in the third
equality we used (\ref{adit}). Letting in (\ref{ad-o}) $X:= \tilde{X}_{0}$ and using
(\ref{adit-2}) we get
$$
B_{Y}(\tilde{X}_{0}\circ Z) = B_{Y}(Z)\circ \tilde{X}_{0},\quad
\forall Y,Z\in {\mathcal X}(M).
$$
An induction argument now shows that
$$
B_{Y} (\tilde{X}_{0}^{k}) = V\circ \tilde{X}_{0}^{k}\circ
Y,\quad\forall Y\in {\mathcal X}(M),\quad \forall k\in\mathbb{N}.
$$
Since $\{ \tilde{X}_{0}, \tilde{X}_{0}^{2}, \cdots ,
\tilde{X}_{0}^{n}\}$ is a frame of $TM$,
$$
B_{Y}(X) = V\circ X\circ Y,\quad\forall X, Y\in {\mathcal X}(M)
$$
and thus $\tilde{\mathcal S}$ is a special family on $(M, \circ , e).$
Claim {\it i)} follows. For claim {\it ii)}, one checks, using
(\ref{l}), that any connection $\nabla\in\mathcal S$ satisfies
(\ref{dual-1}). On the other hand, $({\mathcal E}\circ
\tilde{X}_{0})* \cdots * ({\mathcal E}\circ \tilde{X}_{0})$
($k$-times) is equal to ${\mathcal E}\circ \tilde{X}_{0}^{k}$ and hence
$\{ {\mathcal E}\circ \tilde{X}_{0},\cdots , {\mathcal E}\circ
\tilde{X}_{0}^{n}\}$ is a frame of $TM$. Therefore, from claim
{\it i)}, the set of compatible, torsion-free connections on $(M,
*, {\mathcal E})$ satisfying (\ref{dual-1}) is special. It
coincides with $\mathcal S .$
\end{proof}

\begin{rem}\label{semi-simple} {\rm i) Consider the setting of Example
\ref{example-dist} and assume, moreover, that the $F$-manifold $(M, \circ ,e)$ is semi-simple. We claim that under this additonal assumption,
one can always find a compatible,
torsion-free connection satisfying relation (\ref{l}) (and hence the family of all such connections
is special). Using the canonical coordinates  $(u^{1}, \cdots , u^{n})$ on the $F$-manifold $(M, \circ , e)$,
we can give a direct proof for this  claim, as follows.
As proved in \cite{lorenzoni}, the Christoffel symbols
$\Gamma_{ij}^{k}$ of a compatible, torsion-free
connection $\tilde{\nabla}$ on $(M, \circ , e)$ satisfy, in the
coordinate system $(u^{1}, \cdots , u^{n})$,
\begin{equation}\label{g-1}
\Gamma^{s}_{ij} = 0,\quad\forall i\neq j,\ s\notin\{ i,j\}
\end{equation}
and
\begin{equation}\label{g-2}
\Gamma^{i}_{ij} = - \Gamma^{i}_{jj}, \quad\forall i\neq j,
\end{equation}
while $\Gamma_{ii}^{i}$ are arbitrary. Given a vector field
$\tilde{X}_{0}= \sum_{k=1}^{n}X^{k}\frac{\partial}{\partial u^{k}}$
relation (\ref{l}) is equivalent to
\begin{equation}\label{det}
\frac{\partial X^{j}}{\partial u^{i}} + (X^{i}- X^{j})
\Gamma^{j}_{ii}=0,\quad i\neq j.
\end{equation}
If, moreover, $\{ \tilde{X}_{0}, \tilde{X}_{0}^{2},\cdots ,
\tilde{X}_{0}^{n}\}$ is a frame of $TM$, $X^{i}(p)\neq X^{j}(p)$
at any $p\in M$, for any $i\neq j$, and $\Gamma^{j}_{ii}$, $i\neq
j$, are determined by $\tilde{X}_{0}$ using (\ref{det}).
We proved that a connection $\tilde{\nabla}$ on $(M, \circ , e)$ is torsion-free, compatible, and satisfies
(\ref{l}) if and only if its Christoffel symbols $\Gamma^{i}_{jk}$, with $i$, $j$, $k$ not all equal,
are determined by (\ref{g-1}), (\ref{g-2}), (\ref{det}), and the remaining $\Gamma^{i}_{ii}$ are arbitrary.
It follows that the family of all such connections is non-empty (and special).\\

ii) In the semi-simple setting, relation (\ref{l}) was considered
in \cite{lorenzoni2}, in connection with semi-Hamiltonian systems.
It is worth to remark a difference between our conventions and
those used in \cite{lorenzoni,lorenzoni2}: while for us a
compatible connection on an $F$-manifold $(M, \circ , e)$ is a
connection $\tilde{\nabla}$ for which the vector valued $(3,0)$-tensor field
$\tilde{\nabla}(\circ )$ is totally symmetric, in the language of
\cite{lorenzoni, lorenzoni2} a compatible connection satisfies,
besides this condition,  another additional condition, involving
the curvature, see (\ref{lorenz}) from the Section \ref{curvature-sect-1}.
Hopefully this will not generate confusion.}
\end{rem}

In the following sections we discuss several applications of
Theorem \ref{main-duality}.

\section{Duality for $F$-manifolds with eventual identities
and compatible, torsion-free connections}\label{identity-sect}

It is natural to ask if the duality of Theorem \ref{main-duality}
induces a duality for $F$-manifolds with eventual identities and
compatible, torsion-free connections, rather than special
families. This amounts to choosing, in a way consistent with the
duality, a preferred connection in a special family.
We will show that this can be done
by fixing the covariant derivative of the unit field. We shall consider two cases of this
construction: when the unit fields are parallel (see Section \ref{duality-parallel}) and when the preferred
connections are the second structure connections (see Section \ref{Frobenius-duality}).

\subsection{Duality and parallel unit fields}\label{duality-parallel}

\begin{lem}\label{s-u}
Let $\tilde{\mathcal S}$ be a special family on an $F$-manifold
$(M, \circ ,e)$ and $U$ a vector field on $M$. There is a unique
connection $\tilde{\nabla}$ in $\tilde{\mathcal S}$ such that
$$
\tilde{\nabla}_{X}(e) = U\circ X,\quad\forall X\in {\mathcal X}(M).
$$
In particular, any special family on an $F$-manifold contains a
unique connection for which the unit field is parallel.
\end{lem}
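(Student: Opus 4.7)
The plan is to fix any base connection $\tilde{\nabla}\in\tilde{\mathcal S}$ and show that the equation $\tilde{\nabla}^V_X(e) = U\circ X$ for $V\in\mathcal{X}(M)$ reduces, after using relation (\ref{sym}), to a single algebraic equation that determines $V$ uniquely.

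First I would unwind the definition. By Definition \ref{special}, any connection in $\tilde{\mathcal S}$ is of the form $\tilde{\nabla}^V_X Y = \tilde{\nabla}_X Y + V\circ X\circ Y$ for some $V\in\mathcal{X}(M)$. Specializing to $Y=e$ gives
\begin{equation*}
\tilde{\nabla}^V_X(e) = \tilde{\nabla}_X(e) + V\circ X,\quad\forall X\in\mathcal{X}(M).
\end{equation*}
The requirement $\tilde{\nabla}^V_X(e) = U\circ X$ thus becomes $V\circ X = (U - \tilde{\nabla}_e(e))\circ X$ for every $X$, where I used formula (\ref{sym}) to write $\tilde{\nabla}_X(e) = \tilde{\nabla}_e(e)\circ X$ (this is where compatibility and torsion-freeness of $\tilde{\nabla}$ enter, and these hold by Remark \ref{spec1}).

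Setting $X=e$ then forces $V = U - \tilde{\nabla}_e(e)$, and conversely, plugging this $V$ back gives the desired identity for all $X$. This establishes existence and uniqueness simultaneously: the map $V\mapsto \tilde{\nabla}^V_X(e)$ from $\mathcal{X}(M)$ to the space of $\mathcal{X}(M)$-valued $1$-forms of the shape $X\mapsto W\circ X$ is a bijection, since $V\circ e = V$. The "in particular" statement follows by taking $U=0$, giving the unique connection in $\tilde{\mathcal S}$ with $\tilde{\nabla}_X(e)=0$ for all $X$.

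There is no real obstacle here; the only subtlety is the appeal to the symmetry identity $\tilde{\nabla}_X(e) = \tilde{\nabla}_e(e)\circ X$, which ensures that the pointwise condition on $V$ coming from $X=e$ is already sufficient to determine the full equation, rather than overdetermining it. Once this is observed, both existence and uniqueness are a one-line calculation.
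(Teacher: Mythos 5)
Your proof is correct and is exactly the argument the paper intends: the paper's proof is simply ``Straightforward from (\ref{sym})'', and your computation — parametrizing the family by $V$, specializing to $Y=e$, invoking $\tilde{\nabla}_X(e)=\tilde{\nabla}_e(e)\circ X$, and evaluating at $X=e$ to pin down $V=U-\tilde{\nabla}_e(e)$ — is the straightforward route being alluded to.
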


\begin{proof} Straightforward from (\ref{sym}).
\end{proof}

The following proposition with $U=0$ gives a duality for
$F$-manifolds with eventual identities and compatible,
torsion-free connections preserving the unit fields.

\begin{prop}\label{dual-id} Let $M$ be a manifold and $U$ a
fixed vector field on $M$. The map
\begin{equation}\label{dual-id-map}
(\circ , e, {\mathcal E}, \tilde{\nabla})\rightarrow (*, {\mathcal E},
e,\nabla )
\end{equation}
where $*$ is related to $\circ$ by (\ref{star-multiplication}) and
$\nabla$ is related to $\tilde{\nabla}$ by
\begin{equation}\label{con-u}
\nabla_{X}(Y)= {\mathcal E}\circ \tilde{\nabla}_{X}({\mathcal
E}^{-1}\circ Y) -\tilde{\nabla}_{\mathcal E^{-1}\circ Y}
({\mathcal E}) \circ X + \frac{1}{2} [{\mathcal E}^{-1}, {\mathcal
E}]\circ X\circ Y + U*X*Y,
\end{equation}
is an involution on the set of quatruples $(\circ , e, {\mathcal
E}, \tilde{\nabla})$, where $\circ$ is the multiplication of an
$F$-manifold structure on $M$ with unit field $e$, $\mathcal E$ is
an eventual identity on $(M, \circ , e)$
and $\tilde{\nabla}$ is torsion-free connection, compatible with $\circ$,
such that
\begin{equation}\label{deriv-cov-e}
\tilde{\nabla}_{X}(e) = U\circ X,\quad\forall X\in {\mathcal X}(M).
\end{equation}
\end{prop}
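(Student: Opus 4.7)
The plan is to show (a) the map is well-defined, meaning the image quadruple $(*,\mathcal E,e,\nabla)$ also lies in the specified set, and (b) composing the map with itself yields the identity.

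For (a), that $*$ is an $F$-manifold multiplication with unit $\mathcal E$ and that $e$ is an eventual identity on $(M,*,\mathcal E)$ is Theorem \ref{previousmain}. Comparing (\ref{con-u}) with (\ref{W-x}), the connection $\nabla$ is the connection $\nabla^{W}$ of Theorem \ref{main-duality} for the specific choice $W = \tfrac12[\mathcal E^{-1},\mathcal E]\circ \mathcal E^{2} + U$, so Corollary \ref{main-cor} gives that $\nabla$ is torsion-free and compatible with $*$. To verify the normalization $\nabla_X(\mathcal E) = U*X$ I would plug $Y=\mathcal E$ into (\ref{con-u}): the first term gives $\mathcal E\circ U\circ X$ by hypothesis; the second, via the torsion-free property $\tilde\nabla_e(\mathcal E)=\tilde\nabla_{\mathcal E}(e)+[e,\mathcal E]$, simplifies to $-(U\circ\mathcal E + [e,\mathcal E])\circ X$; the third, using relation (\ref{e-m-n}) with $(n,m)=(-1,1)$, which gives $\tfrac12[\mathcal E^{-1},\mathcal E]=\mathcal E^{-1}\circ[e,\mathcal E]$, becomes $[e,\mathcal E]\circ X$; the fourth, $U*X*\mathcal E$, collapses to $U*X$ since $\mathcal E$ is the $*$-unit. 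The four terms sum to $U\circ X\circ\mathcal E^{-1}=U*X$.

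For (b), I would apply the map a second time to $(*,\mathcal E,e,\nabla)$, interchanging roles: the \lq\lq old'' multiplication becomes $*$, the old unit becomes $\mathcal E$, the old eventual identity becomes $e$ with inverse $e^{-1,*}=\mathcal E^{2}$, and the new dual multiplication is $*'=\circ$. The first two terms of the resulting formula for $\tilde\nabla''_X(Y)$ collapse, by identity (\ref{expression}) from the proof of Theorem \ref{main-duality} (whose cancellation of the free term $W$ is the key mechanism), into $\tilde\nabla_X(Y) - \bigl(\tfrac12[\mathcal E,\mathcal E^{-1}] + U\bigr)\circ X\circ Y$, where I used $\tilde\nabla_e(e)=U\circ e=U$. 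The remaining two terms, evaluated with (\ref{e-m-n}) for $(n,m)=(0,2)$ giving $[\mathcal E^{2},e]=-2\mathcal E\circ[e,\mathcal E]$, contribute $\tfrac12[\mathcal E,\mathcal E^{-1}]\circ X\circ Y + U\circ X\circ Y$, which exactly cancels the correction and produces $\tilde\nabla''_X(Y)=\tilde\nabla_X(Y)$.

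The hard part is the bookkeeping in the second application: the correction term $\tfrac12[\mathcal E^{-1},\mathcal E]\circ X\circ Y$ appearing in (\ref{con-u}) is not an arbitrary choice but is fixed precisely by the demand that it, together with the $U$-terms, compensate the intrinsic correction $\tfrac12[\mathcal E,\mathcal E^{-1}] + \tilde\nabla_e(e)$ coming out of (\ref{expression}) when the duality formula is iterated. Once (\ref{expression}) and (\ref{e-m-n}) are invoked, the rest is mechanical algebra in the commutative associative multiplication $\circ$.
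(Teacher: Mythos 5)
Your proposal is correct and follows essentially the same route as the paper: well-definedness via Theorem \ref{previousmain} and Corollary \ref{main-cor} plus the explicit check that $\nabla_X(\mathcal E)=U*X$, and the involution property via identity (\ref{expression}) together with (\ref{e-m-n})/(\ref{cov4}) and $\tilde\nabla_e(e)=U$. The only cosmetic difference is that you make explicit the identification $W=\tfrac12[\mathcal E^{-1},\mathcal E]\circ\mathcal E^{2}+U$ and carry out the normalization computation that the paper leaves as "easy to check"; the cancellation mechanism in the second application is identical.
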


\begin{proof}
Assume that $(\circ , e, {\mathcal E}, \tilde{\nabla})$ is a
quatruple like in the statement of the proposition. From Theorem
\ref{previousmain}, $*$ defines an $F$-manifold structure on $M$
with unit field $\mathcal E$ and $e$ is an eventual identity on
$(M, *, {\mathcal E}).$ From Corollary \ref{main-cor}, the
connection $\nabla$, related to $\tilde{\nabla}$ by (\ref{con-u}),
is compatible with $*$ and is torsion-free. Moreover, it is easy
to check, using (\ref{cov4}), (\ref{deriv-cov-e}) and the
torsion-free property of $\tilde{\nabla}$, that
$$
\nabla_{X}({\mathcal E})= U*X,\quad\forall X\in {\mathcal X}(M).
$$
It follows that the map (\ref{dual-id-map}) is well defined. It
remains to prove that it is an involution. This amounts to showing
that
\begin{equation}\label{e-star}
\tilde{\nabla}_{X}(Y)=
e* \nabla_{X} (e^{-1,*}* Y) -\nabla_{e^{-1,*}*Y}(e)* X +
\frac{1}{2} [e^{-1,*},e]*X*Y+U\circ X\circ Y ,
\end{equation}
for any vector fields $X,Y\in {\mathcal X}(M).$ To prove
(\ref{e-star}) we make the following computation: from
(\ref{expression}) and the definition of $*$,
\begin{align*}
&e* \nabla_{X} (e^{-1,*}* Y) -\nabla_{e^{-1,*}*Y}(e)* X + \frac{1}{2}
[e^{-1,*},e]*X*Y+U\circ X\circ Y\\
&=\tilde{\nabla}_{X}(Y) -\left(\frac{1}{2}[{\mathcal E}, {\mathcal E}^{-1}]
+\tilde{\nabla}_{e}(e)\right) \circ X\circ Y+\frac{1}{2}
[{\mathcal E}^{2}, e]\circ {\mathcal E}^{-2}\circ X\circ Y
+U\circ X\circ Y\\
&= \tilde{\nabla}_{X}(Y) - \tilde{\nabla}_{e}(e)\circ X\circ Y + U\circ X
\circ Y,
\end{align*}
where in the last equality we used $[{\mathcal E}^{2}, e] =
2[{\mathcal E}, e]\circ \mathcal E$ and (\ref{cov4}). From
(\ref{deriv-cov-e}) $\tilde{\nabla}_{e}(e)=U$ and relation
(\ref{e-star}) follows.
\end{proof}

\subsection{Duality and second structure connections}\label{Frobenius-duality}

From Definition
\ref{def-second}, the second structure connection
${\nabla}^{\mathcal F}$ of an $F$-manifold $(M, \circ , e,
{\mathcal E},\tilde{\nabla})$ with an eventual identity $\mathcal
E$ and a compatible, torsion-free connection $\tilde{\nabla}$, is
given by (\ref{con-u}), with
\begin{equation}\label{def-sec}
U:= \frac{1}{2} [{\mathcal E}, {\mathcal E}^{-1}]\circ {\mathcal E}^{2},
\end{equation}
but despite this it does not fit into the involution of Proposition
\ref{dual-id}. The reason is that
$U$ is a fixed vector field
in Proposition \ref{dual-id}, while in (\ref{def-sec}) $U$  changes when
$(\circ , e, {\mathcal E},\tilde{\nabla})$ varies in the domain of the map
(\ref{dual-id-map}). If we want to construct a duality for $F$-manifolds with eventual identities and
second structure connections, we therefore need a different type
of map, like in the following proposition.

\begin{prop}\label{dual-sec} The map
\begin{equation}\label{map-second}
(M,\circ ,e,{\mathcal E}, \tilde{\nabla})\rightarrow (M, *, {\mathcal E},e,
{\nabla}^{\mathcal F})
\end{equation}
where $*$ is related to $\circ$ by (\ref{star-multiplication}) and
${\nabla}^{\mathcal F}$ is the second structure connection of $(M,
\circ , e, {\mathcal E}, \tilde{\nabla})$, is an involution on the
set of $F$-manifolds $(M, \circ , e, {\mathcal E},
\tilde{\nabla})$ with eventual identities and compatible,
torsion-free connections satisfying
\begin{equation}\label{nabla-i-i}
\tilde{\nabla}_{X}(e) = \frac{1}{2}
[{\mathcal E}^{-1}, {\mathcal E}]\circ X,\quad
\forall X\in {\mathcal X}(M).
\end{equation}
\end{prop}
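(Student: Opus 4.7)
The plan is to reduce the proposition to Theorem \ref{main-duality} (the involution on special families) combined with Lemma \ref{s-u} (uniqueness in a special family given the covariant derivative of the unit field), modulo a single direct computation.

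First I would verify that the map lands in the stated domain. Proposition \ref{structure} already ensures that $\nabla^{\mathcal{F}}$ is torsion-free and compatible with $*$, so the only remaining check is the dual analog of (\ref{nabla-i-i}), namely
\[
\nabla^{\mathcal{F}}_X(\mathcal{E}) = \tfrac{1}{2}[e^{-1,*}, e]* X,\qquad\forall X\in\mathcal{X}(M),
\]
where $e^{-1,*}=\mathcal{E}^2$. Setting $Y=\mathcal{E}$ in Definition \ref{def-second} gives $\nabla^{\mathcal{F}}_X(\mathcal{E}) = \mathcal{E}\circ\tilde{\nabla}_X(e)-\tilde{\nabla}_e(\mathcal{E})\circ X$; the torsion-free property rewrites $\tilde{\nabla}_e(\mathcal{E})=\tilde{\nabla}_{\mathcal{E}}(e)+[e,\mathcal{E}]$, while hypothesis (\ref{nabla-i-i}) replaces both $\tilde{\nabla}_X(e)$ and $\tilde{\nabla}_{\mathcal{E}}(e)$ by expressions in $[\mathcal{E}^{-1},\mathcal{E}]$. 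Relations (\ref{cov4}) and (\ref{e-m-n}) (the latter with $n=2,\,m=0$) then collapse both sides to $-[e,\mathcal{E}]\circ X$. This short identity is the only genuine computation in the proof.

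Next I would lift to the level of special families. Let $\tilde{\mathcal{S}}$ be the special family on $(M,\circ,e)$ generated by $\tilde{\nabla}$, and set $\mathcal{S}:=\mathcal{D}_{\mathcal{E}}(\tilde{\mathcal{S}})$. By Proposition \ref{structure}, $\nabla^{\mathcal{F}}\in\mathcal{S}$, so (by Remark \ref{spec1}) $\mathcal{S}$ is equally described as the special family on $(M,*,\mathcal{E})$ generated by $\nabla^{\mathcal{F}}$. Theorem \ref{main-duality} then yields $\mathcal{D}_e(\mathcal{S})=\tilde{\mathcal{S}}$. Applying Proposition \ref{structure} to the dual data $(M,*,\mathcal{E},e,\nabla^{\mathcal{F}})$ shows that its second structure connection, which I temporarily denote $(\nabla^{\mathcal{F}})^{\mathcal{F}}$, lies in $\mathcal{D}_e(\mathcal{S})=\tilde{\mathcal{S}}$.

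Finally, the symmetric version of the first step (interchanging the roles of $\mathcal{E}$ and $e$) produces $(\nabla^{\mathcal{F}})^{\mathcal{F}}_X(e)=\tfrac{1}{2}[\mathcal{E}^{-1},\mathcal{E}]\circ X$, which coincides with $\tilde{\nabla}_X(e)$ by hypothesis (\ref{nabla-i-i}). Since $(\nabla^{\mathcal{F}})^{\mathcal{F}}$ and $\tilde{\nabla}$ both lie in $\tilde{\mathcal{S}}$ and induce the same covariant derivative of the unit field, Lemma \ref{s-u} forces them to coincide, giving the required involutivity. I do not anticipate any serious obstacle: the one nontrivial calculation is brief and uses only identities already at hand, while the rest is a formal consequence of Theorem \ref{main-duality}, Proposition \ref{structure} and Lemma \ref{s-u}.
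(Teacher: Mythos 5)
Your proof is correct. The well-definedness check---that $\nabla^{\mathcal F}_{X}({\mathcal E})=\frac{1}{2}[e^{-1,*},e]*X$, with both sides collapsing to $[{\mathcal E},e]\circ X$ via (\ref{cov4}) and (\ref{e-m-n})---is exactly the paper's first step (recorded there as (\ref{nabla-i}) without the details you supply). Where you genuinely diverge is in the involutivity. The paper substitutes $\nabla^{\mathcal F}$ into the identity (\ref{expression}), already derived in the proof of Theorem \ref{main-duality} and valid for every $\nabla^{W}$ in the dual family since the $W$-terms cancel, and observes that hypothesis (\ref{nabla-i-i}) makes the correction term $\frac{1}{2}[{\mathcal E},{\mathcal E}^{-1}]+\tilde{\nabla}_{e}(e)$ vanish, so that $\tilde{\nabla}$ is literally the second structure connection of the dual data. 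You instead argue at the level of special families: Proposition \ref{structure} and Theorem \ref{main-duality} place $(\nabla^{\mathcal F})^{\mathcal F}$ in ${\mathcal D}_{e}({\mathcal D}_{\mathcal E}(\tilde{\mathcal S}))=\tilde{\mathcal S}$, and Lemma \ref{s-u} then identifies it with $\tilde{\nabla}$ by matching the covariant derivatives of the unit field, which follows from the symmetric version of your first calculation (legitimate, since the dual data satisfies all the hypotheses with the roles of $(\circ,e,{\mathcal E})$ and $(*,{\mathcal E},e)$ interchanged). Your route buys an essentially computation-free involutivity step at the cost of invoking more of the surrounding machinery; the paper's route is shorter because (\ref{expression}) is already on hand. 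Both are sound.
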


\begin{proof} Choose $(M, \circ , e, {\mathcal E}, \tilde{\nabla})$
from the domain of the map (\ref{map-second}) and let
$\nabla^{\mathcal F}$ be its second structure connection. Note
that
\begin{equation}\label{nabla-i}
{\nabla}^{\mathcal F}_{X}({\mathcal E}) = \left( [{\mathcal E},e]
\circ {\mathcal E}\right)* X =
\frac{1}{2}[e^{-1,*},e]*X,\quad\forall X\in {\mathcal X}(M).
\end{equation}
So the map (\ref{map-second}) is well defined. Moreover, from
(\ref{expression}) and (\ref{nabla-i-i}),
$$
\tilde{\nabla}_{X}Y= e*\nabla^{\mathcal F}_{X}(e^{-1,*}*Y) -\nabla^{\mathcal F}_{e^{-1,*}*Y}(e)*X,
$$
i.e.
$\tilde{\nabla}$ is the
second structure connection of $(M, *,{\mathcal
E},e,{\nabla}^{\mathcal F})$. It follows that the map
(\ref{map-second}) is an involution.
\end{proof}

\section{Duality and curvature}\label{curvature-sect-1}

Let $(M, \circ  , e,\tilde{\nabla})$ be an $F$-manifold with a
compatible, torsion-free connection $\tilde{\nabla}$,  with
curvature $R^{\tilde{\nabla}}.$  We assume that for any
$X, Y, Z, V\in {\mathcal
X}(M)$,
\begin{equation}\label{lorenz}
V\circ R^{\tilde{\nabla}}_{Z,Y}X + Y\circ
R^{\tilde{\nabla}}_{V,Z}X + Z\circ R^{\tilde{\nabla}}_{Y, V}X=0.
\end{equation}
In the following theorem we show that condition (\ref{lorenz}) (if
true) is independent of the choice of connection in a special
family and is preserved under the duality of Theorem
\ref{main-duality}.

\begin{thm}\label{lorenz-duality} i) Let $\tilde{\nabla}$ be a
compatible, torsion-free
connection on an $F$-manifold $(M, \circ , e)$ and
$$
\tilde{\mathcal S} :=
\{\tilde{\nabla}^{V}_{X}(Y)=\tilde{\nabla}_{X}(Y) + V\circ X\circ
Y,\quad V\in {\mathcal X}(M)\}
$$
the associated special family. Assume that (\ref{lorenz}) holds
for $\tilde{\nabla}.$ Then (\ref{lorenz}) holds for
all connections $\tilde{\nabla}^{V}$ from $\tilde{\mathcal S} .$\\

ii) The involution
\begin{equation}\label{involution-added}
(M, \circ , e, {\mathcal E}, \tilde{\mathcal S})\rightarrow (M,
*,{\mathcal E},e, {\mathcal S})
\end{equation}
from Theorem \ref{main-duality} preserves the class of special
families of connections which satisfy condition (\ref{lorenz}). More
precisely, if $\tilde{\nabla}\in\tilde{\mathcal S}$ and $\nabla\in
{\mathcal S}$ are any two connections belonging, respectively,  to
a special family $\tilde{\mathcal S}$ and its dual ${\mathcal S}={\mathcal D}_{\mathcal E}(\tilde{S})$, then
\begin{equation}\label{lorenz1}
V\circ R^{\tilde{\nabla}}_{Z, Y}X + Y\circ
R^{\tilde{\nabla}}_{V,Z}X + Z\circ R^{\tilde{\nabla}}_{Y,
V}X=0,\quad\forall X, Y, Z, V\in {\mathcal X}(M)
\end{equation}
if and only if
\begin{equation}\label{lorenz2}
V* R^{{\nabla}}_{Z, Y}X + Y* R^{{\nabla}}_{V,Z}X + Z*
R^{{\nabla}}_{Y, V}X=0,\quad\forall X, Y, Z, V\in {\mathcal X}(M).
\end{equation}

\end{thm}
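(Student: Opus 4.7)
For part (i), I would compare the curvatures of $\tilde{\nabla}$ and $\tilde{\nabla}^{U}:=\tilde{\nabla}+B$, where $B_{X}(Y):=U\circ X\circ Y$ (I relabel the shift parameter of the family to $U$ to avoid clashing with the cyclic partner $V$ in (\ref{lorenz})). Since $\tilde{\nabla}$ is torsion-free,
\[
R^{\tilde{\nabla}^{U}}_{X,Y}-R^{\tilde{\nabla}}_{X,Y}=(\tilde{\nabla}_{X}B)_{Y}-(\tilde{\nabla}_{Y}B)_{X}+[B_{X},B_{Y}],
\]
and by commutativity and associativity of $\circ$ one has $[B_{X},B_{Y}]=0$. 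A Leibniz expansion of $\tilde{\nabla}_{X}(U\circ Y\circ Z)$ combined with the total symmetry of $\tilde{\nabla}(\circ)$ reduces the remaining two terms to
\[
\Delta_{X,Y}(Z):=\tilde{\nabla}_{X}(U)\circ Y\circ Z-\tilde{\nabla}_{Y}(U)\circ X\circ Z+Y\circ\tilde{\nabla}_{X}(\circ)(U,Z)-X\circ\tilde{\nabla}_{Y}(\circ)(U,Z).
\]
Substituting into the cyclic sum $V\circ\Delta_{Z,Y}(X)+Y\circ\Delta_{V,Z}(X)+Z\circ\Delta_{Y,V}(X)$, each of the twelve resulting summands pairs with a partner differing only by a transposition of two factors in the $\circ$-product, and hence cancels by commutativity. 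Thus (\ref{lorenz}) for $\tilde{\nabla}$ and for $\tilde{\nabla}^{U}$ are equivalent.

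For part (ii), I would first observe that since $\mathcal{E}$ is $\circ$-invertible and $X*Y=X\circ Y\circ\mathcal{E}^{-1}$, $\circ$-multiplying (\ref{lorenz2}) by $\mathcal{E}$ transforms it into the $\circ$-identity
\[
V\circ R^{\nabla}_{Z,Y}(X)+Y\circ R^{\nabla}_{V,Z}(X)+Z\circ R^{\nabla}_{Y,V}(X)=0.
\]
By part (i) this depends only on the special family $\mathcal{S}$, not on the particular $\nabla$; the same is true for $\tilde{\mathcal{S}}$ and $\tilde{\nabla}$. So it is enough to verify the equivalence for one natural pair $(\tilde{\nabla},\nabla)$.

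I would pick $\tilde{\nabla}\in\tilde{\mathcal{S}}$ and introduce the auxiliary conjugated connection $\tilde{\nabla}^{c}_{X}(Y):=\mathcal{E}\circ\tilde{\nabla}_{X}(\mathcal{E}^{-1}\circ Y)$. A direct check gives $R^{\tilde{\nabla}^{c}}_{X,Y}(Z)=\mathcal{E}\circ R^{\tilde{\nabla}}_{X,Y}(\mathcal{E}^{-1}\circ Z)$, so
\[
V\circ R^{\tilde{\nabla}^{c}}_{Z,Y}(X)+\mathrm{cyc}_{V,Y,Z}=\mathcal{E}\circ\left[V\circ R^{\tilde{\nabla}}_{Z,Y}(\mathcal{E}^{-1}\circ X)+\mathrm{cyc}_{V,Y,Z}\right].
\]
Since $X$ is arbitrary and $\mathcal{E}$ is invertible, the $\circ$-cyclic identity for $R^{\tilde{\nabla}^{c}}$ is equivalent to that for $R^{\tilde{\nabla}}$. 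Next, by Corollary \ref{main-cor} (with $W=0$), the connection $\nabla^{0}_{X}(Y):=\tilde{\nabla}^{c}_{X}(Y)+\mathcal{E}\circ\tilde{\nabla}_{Y}(\mathcal{E}^{-1})\circ X$ belongs to $\mathcal{S}$. Writing $\nabla^{0}=\tilde{\nabla}^{c}+C$ with $C_{X}(Y):=\mathcal{E}\circ\tilde{\nabla}_{Y}(\mathcal{E}^{-1})\circ X$, and applying the general curvature-difference formula (taking into account that $\tilde{\nabla}^{c}$ has nontrivial torsion $T^{\tilde{\nabla}^{c}}(X,Y)=C_{Y}(X)-C_{X}(Y)$), the goal reduces to showing that the $\circ$-cyclic sum of $R^{\nabla^{0}}_{Z,Y}(X)-R^{\tilde{\nabla}^{c}}_{Z,Y}(X)$ vanishes identically.

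The main obstacle is exactly this last cancellation. Unlike in part (i), where the shift tensor $B$ was symmetric in its lower indices (killing both $[B_{X},B_{Y}]$ and any torsion contribution for free), here $C_{X}(Y)$ is not $X,Y$-symmetric and $\tilde{\nabla}^{c}$ has nonzero torsion, so all three ingredients $(\tilde{\nabla}^{c}_{X}C)_{Y}-(\tilde{\nabla}^{c}_{Y}C)_{X}$, $[C_{X},C_{Y}]$ and $C(T^{\tilde{\nabla}^{c}}(X,Y),\cdot)$ contribute non-trivially. The expected cancellation in the cyclic sum should emerge from repeated use of commutativity of $\circ$, total symmetry of $\tilde{\nabla}(\circ)$, and the eventual-identity characterization (\ref{char}) applied to $\mathcal{E}^{-1}$, which is again an eventual identity by the remarks following Theorem \ref{previousmain}.
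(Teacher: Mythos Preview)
Your argument for part (i) is correct and essentially matches the idea behind the paper's computation: the curvature difference has the shape $Q(Z,X)\circ Y-Q(Y,X)\circ Z$ (with $Q(A,B)=\tilde{\nabla}_{A}(U\circ B)-U\circ\tilde{\nabla}_{A}(B)$, cf.~relation (\ref{curburi})), and any expression of this form is annihilated by the $\circ$-cyclic sum over $V,Y,Z$. Your intermediate formula for $\Delta$ is a valid rearrangement of this.

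Part (ii), however, is left genuinely incomplete: you only \emph{expect} the cyclic sum of $R^{\nabla^{0}}-R^{\tilde{\nabla}^{c}}$ to vanish, and your suggested route (handling separately $(\tilde{\nabla}^{c}C)$, $[C,C]$, torsion of $\tilde{\nabla}^{c}$, and invoking the eventual-identity relation (\ref{char})) is both unverified and more intricate than necessary. The paper avoids this detour entirely. It treats $\nabla^{A}_{X}Y=\mathcal{E}\circ\tilde{\nabla}_{X}(\mathcal{E}^{-1}\circ Y)+A(Y)\circ X$ for an \emph{arbitrary} endomorphism $A$ and an \emph{arbitrary invertible} $\mathcal{E}$ (not assumed to be an eventual identity), and computes directly
\[
R^{\nabla^{A}}_{Y,Z}(X)=\mathcal{E}\circ R^{\tilde{\nabla}}_{Y,Z}(\mathcal{E}^{-1}\circ X)-Q(Y,X)\circ Z+Q(Z,X)\circ Y
\]
for a certain tensor $Q$. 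The first term is exactly your $R^{\tilde{\nabla}^{c}}$, and the remaining two terms have precisely the same ``$\cdot\circ Z-\cdot\circ Y$'' structure that you already exploited in part (i), so the cyclic sum kills them automatically. Thus no splitting into $\tilde{\nabla}^{c}+C$, no torsion bookkeeping, and \emph{no use of the eventual-identity condition} is needed; the result is a purely algebraic consequence of the form of $\nabla^{A}$, the compatibility and torsion-freeness of $\tilde{\nabla}$, and commutativity of $\circ$.
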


Note that claim {\it i)} from the above theorem
is a particular case of claim {\it ii)}: if in (\ref{involution-added})
$\mathcal E = e$, then $\circ =*$, $\tilde{\mathcal S}= \mathcal S$
and claim {\it ii)} reduces to claim {\it i)}.
Therefore, it is enough to prove claim {\it ii)}.
For this,  we use that any two connections, one from $\tilde{\mathcal S}$ and
the other from the dual $\mathcal S = {\mathcal  D}_{\mathcal E}(\tilde{\mathcal S})$,
are related by \begin{equation}\label{connection-v}
\nabla^{A}_{X}Y = {\mathcal E}\circ \tilde{\nabla}_{X}({\mathcal
E}^{-1}\circ Y) + A(Y)\circ X,\quad\forall X, Y\in {\mathcal
X}(M),
\end{equation}
where $A$ is a section of $\mathrm{End}(TM)$. Another easy but
useful fact is that (\ref{lorenz2}) holds if and only if it holds
with $*$ replaced by $\circ .$ With these preliminary remarks,
Theorem \ref{lorenz-duality} is a consequence of the following
general result:

\begin{prop}
Let $(M, \circ ,e,\tilde{\nabla})$ be an $F$-manifold with
a compatible, torsion-free connection $\tilde{\nabla}$, such that
\begin{equation}
V\circ R^{\tilde{\nabla}}_{Z, Y}X + Y\circ
R^{\tilde{\nabla}}_{V,Z}X + Z\circ R^{\tilde{\nabla}}_{Y, V}X=0,
\end{equation}
for any $X, Y, Z, V\in {\mathcal X}(M).$ Let $A$ be a section of
$\mathrm{End}(TM)$ and $\mathcal E$ an invertible vector field
(not necessarily an eventual identity). Let ${\nabla}^{A}$ be the
connection given by (\ref{connection-v}). Then also
\begin{equation}
V\circ R^{{\nabla}^{A}}_{Z, Y}X + Y\circ  R^{{\nabla}^{A}}_{V,Z}X
+ Z\circ R^{{\nabla}^{A}}_{Y, V}X=0,
\end{equation}
for any $X, Y, Z, V\in {\mathcal X}(M).$
\end{prop}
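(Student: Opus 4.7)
My plan is to first reduce the general statement to the case $\mathcal{E}=e$ by a conjugation argument. Setting $\Phi(Y):=\mathcal{E}^{-1}\circ Y$ and $\tilde A(Z):=\mathcal{E}^{-1}\circ A(\mathcal{E}\circ Z)$, a direct Leibniz check shows
$$\nabla^A_X Y=\mathcal{E}\circ\bigl(\tilde\nabla_X(\mathcal{E}^{-1}\circ Y)+\tilde A(\mathcal{E}^{-1}\circ Y)\circ X\bigr),$$
so $\nabla^A$ is the $\Phi$-conjugate of the connection $\nabla'_XZ:=\tilde\nabla_XZ+\tilde A(Z)\circ X$. The usual transformation law for curvature under pointwise conjugation yields $R^{\nabla^A}_{X,Y}(Z)=\mathcal{E}\circ R^{\nabla'}_{X,Y}(\mathcal{E}^{-1}\circ Z)$, and hence
$$\mathrm{Cyc}_{Y,Z,V}\,V\circ R^{\nabla^A}_{Z,Y}X=\mathcal{E}\circ\mathrm{Cyc}_{Y,Z,V}\,V\circ R^{\nabla'}_{Z,Y}(\mathcal{E}^{-1}\circ X),$$
after using the commutativity of $\circ$ to pull $\mathcal{E}$ past $V$. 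Since $\mathcal{E}$ is invertible and $X$ arbitrary, it suffices to prove the conclusion for $\nabla'$, i.e.\ under the assumption $\mathcal{E}=e$.

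In this reduced setting, $\nabla-\tilde\nabla=C$ is the tensor $C_X(Y):=A(Y)\circ X$, and the standard formula with $\tilde\nabla$ torsion-free gives
$$R^{\nabla}_{X,Y}Z-R^{\tilde\nabla}_{X,Y}Z=(\tilde\nabla_XC)_YZ-(\tilde\nabla_YC)_XZ+[C_X,C_Y]Z.$$
A Leibniz expansion yields $(\tilde\nabla_XC)_YZ=(\tilde\nabla_XA)(Z)\circ Y+\tilde\nabla_X(\circ)(A(Z),Y)$, so the correction $K_{Z,Y}X:=R^{\nabla}_{Z,Y}X-R^{\tilde\nabla}_{Z,Y}X$ splits into three pieces:
\begin{align*}
\text{(a)}\quad&(\tilde\nabla_ZA)(X)\circ Y-(\tilde\nabla_YA)(X)\circ Z,\\
\text{(b)}\quad&\tilde\nabla_Z(\circ)(A(X),Y)-\tilde\nabla_Y(\circ)(A(X),Z),\\
\text{(c)}\quad&A(A(X)\circ Y)\circ Z-A(A(X)\circ Z)\circ Y.
\end{align*}

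I will then verify that $V\circ K_{Z,Y}X$ has vanishing cyclic sum in $(Y,Z,V)$ piece by piece. Piece (b) is identically zero already before cycling: the total symmetry of $\tilde\nabla(\circ)$ gives $\tilde\nabla_Z(\circ)(A(X),Y)=\tilde\nabla_Y(\circ)(A(X),Z)$. Pieces (a) and (c) have the common shape $V\circ N(Z)\circ Y-V\circ N(Y)\circ Z$ for a linear operator $N$ depending on $X$; writing out the three cyclic permutations and using commutativity of $\circ$ (so that $V\circ N(Z)\circ Y=Y\circ N(Z)\circ V$, and similarly for the other positions) pairs the six resulting terms into three cancelling pairs. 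Combined with the cyclic identity assumed for $R^{\tilde\nabla}$, this proves the cyclic identity for $R^{\nabla}$, hence, by the conjugation step, for $R^{\nabla^A}$. The main obstacle is purely bookkeeping: no deep idea is needed beyond the conjugation reduction, the total symmetry of $\tilde\nabla(\circ)$, and the commutative cancellation, but one must track which of these mechanisms kills each of the summands in $K$.
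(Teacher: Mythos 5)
Your proof is correct and follows essentially the same route as the paper: both establish that $R^{\nabla^A}_{Z,Y}X$ equals $\mathcal{E}\circ R^{\tilde\nabla}_{Z,Y}(\mathcal{E}^{-1}\circ X)$ plus correction terms of the shape $Q(Z,X)\circ Y - Q(Y,X)\circ Z$ (the paper's $Q$ bundles your pieces (a) and (c), while your piece (b) dies by the total symmetry of $\tilde\nabla(\circ)$ exactly as in the paper's computation), and both then kill the corrections in the cyclic sum by the same commutativity pairing. Your only departure is organizational — factoring the computation into a gauge-conjugation reduction to $\mathcal{E}=e$ followed by the standard difference-tensor curvature formula — which is a clean packaging of the same argument.
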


\begin{proof}
A straightforward computation which uses (\ref{connection-v}), the
symmetry of $\tilde{\nabla}(\circ )$ and the torsion-free property
of $\tilde{\nabla}$, gives
\begin{equation}\label{read}
R^{\nabla^{A}}_{Y, Z}X = {\mathcal E}\circ R^{\tilde{\nabla}}_{Y,
Z}({\mathcal E}^{-1}\circ X) - Q(Y, X)\circ Z + Q(Z, X)\circ Y
\end{equation}
where
$$
Q(Y, X):= A ({\mathcal E}\circ \tilde{\nabla}_{Y}({\mathcal
E}^{-1}\circ X) ) +A(A(X)\circ Y) -{\mathcal
E}\circ\tilde{\nabla}_{Y}({\mathcal E}^{-1}\circ A(X)).
$$
From (\ref{read}) we readily obtain that
\begin{align*}
&V\circ R^{{\nabla}^{A}}_{Z, Y}X + Y\circ  R^{{\nabla}^{A}}_{V,Z}X
+ Z\circ R^{{\nabla}^{A}}_{Y, V}X\\
&= {\mathcal E}\circ \left( V\circ R^{\tilde{\nabla}}_{Z,
Y}({\mathcal E}^{-1}\circ X) + Y\circ
R^{\tilde{\nabla}}_{V,Z}({\mathcal E}^{-1}\circ X) + Z\circ
R^{\tilde{\nabla}}_{Y, V}({\mathcal E}^{-1}\circ X)\right) .
\end{align*}
The claim follows.

\end{proof}

The following remark describes various classes of compatible torsion-free connections whose curvature satisfies condition
(\ref{lorenz}).

\begin{rem}{\rm In the setting of Frobenius manifolds, both the Saito metric
and the intersection form are automatically flat, so condition
(\ref{lorenz}) and its dual are trivially satisfied. However, by
twisting by powers of the Euler field (which is an eventual
identity) one may construct a hierarchy of connections each of
which satisfies this condition (this is just a statement, in the
semi-simple case, of the semi-Hamiltonian property of these
hierarchies). An alternative way to construct examples (in the
case of Coxeter group orbit spaces) is to introduce conformal
curvature \cite{paper2}. This results in a constant curvature/zero
curvature pair of connections both of which satisfy (\ref{lorenz})
and one may again keep twisting with the Euler field to obtain
hierarchies of such connections. It is also easy to see that
(\ref{lorenz}) holds for any metric of constant sectional
curvature.}
\end{rem}

\section{Duality and flat connections}\label{curvature-sect-2}

While condition (\ref{lorenz}) is well suited for the duality of
$F$-manifolds with eventual identities and special families of
connections, it is natural to ask if other curvature conditions
are well suited, too. Along these lines it natural to consider the
flatness condition in relation with this duality.

A first remark is that the flatness condition depends on the
choice of connection in a special family. More precisely, the
following lemma holds:

\begin{lem}\label{nabla-lema} Let $\tilde{\nabla}$ and
$\tilde{\nabla}^{V}$ be
two torsion-free, compatible connections on an $F$-manifold $(M,
\circ , e)$, related by
\begin{equation}\label{nabla-flat}
\tilde{\nabla}^{V}_{X}(Y)= \tilde{\nabla}_{X}(Y) + V\circ X\circ
Y,\quad \forall X, Y\in {\mathcal X}(M),
\end{equation}
where $V$ is a vector field. Assume that $\tilde{\nabla}$ is flat.
Then $\tilde{\nabla}^{V}$ is also flat if and only if
\begin{equation}\label{flat-special}
\tilde{\nabla}_{X} (\tilde{\mathcal C}_{V})(Y) \circ Z =
\tilde{\nabla}_{Z}(\tilde{\mathcal C}_{V})(Y) \circ X,\quad
\forall X, Y, Z\in {\mathcal X}(M),
\end{equation}
where $\tilde{\mathcal C}_{V}$ is the section of
$\mathrm{End}(TM)$ given by
$$
\tilde{\mathcal C}_{V}(X)= X\circ V,\quad\forall X\in {\mathcal X}(M).
$$
\end{lem}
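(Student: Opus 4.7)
The plan is to compute $R^{\tilde{\nabla}^{V}}$ as a perturbation of the (assumed vanishing) curvature $R^{\tilde{\nabla}}$, rewrite the resulting condition as an identity on $\tilde{\nabla}(\tilde{\mathcal C}_V)$, and then verify that this identity is equivalent to \eqref{flat-special}. Writing $\tilde{\nabla}^{V}=\tilde{\nabla}+B$ with $B_{X}(Y):=V\circ X\circ Y = \tilde{\mathcal C}_{X}\tilde{\mathcal C}_{V}(Y)$, the standard perturbation formula gives
\[
R^{\tilde{\nabla}^{V}}_{X,Y} = R^{\tilde{\nabla}}_{X,Y} + (\tilde{\nabla}_{X}B)_{Y} - (\tilde{\nabla}_{Y}B)_{X} + [B_{X},B_{Y}].
\]
The commutator term vanishes: since $\circ$ is commutative and associative, the endomorphisms $\tilde{\mathcal C}_{X}$, $\tilde{\mathcal C}_{Y}$ and $\tilde{\mathcal C}_{V}$ pairwise commute, whence $B_{X}B_{Y}=\tilde{\mathcal C}_{X}\tilde{\mathcal C}_{Y}\tilde{\mathcal C}_{V}^{2}=B_{Y}B_{X}$.

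To treat the middle two terms I would expand $\tilde{\nabla}_{X}(V\circ Y\circ Z)$ twice using the compatibility identity $\tilde{\nabla}_{X}(A\circ B)=\tilde{\nabla}_{X}A\circ B+A\circ\tilde{\nabla}_{X}B+\tilde{\nabla}_{X}(\circ)(A,B)$, then antisymmetrize in $X$ and $Y$. The total symmetry of $\tilde{\nabla}(\circ)$ kills the terms of the form $V\circ\tilde{\nabla}_{X}(\circ)(Y,Z)-V\circ\tilde{\nabla}_{Y}(\circ)(X,Z)$, and the surviving terms reassemble into
\[
R^{\tilde{\nabla}^{V}}_{X,Y}(Z) = \tilde{\nabla}_{X}(\tilde{\mathcal C}_{V})(Y\circ Z)-\tilde{\nabla}_{Y}(\tilde{\mathcal C}_{V})(X\circ Z),
\]
where I have used $\tilde{\nabla}_{X}(\tilde{\mathcal C}_{V})(W)=\tilde{\nabla}_{X}V\circ W+\tilde{\nabla}_{X}(\circ)(V,W)$. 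Since $\tilde{\nabla}$ is flat, flatness of $\tilde{\nabla}^{V}$ is therefore equivalent to the identity
\[
\tilde{\nabla}_{X}(\tilde{\mathcal C}_{V})(Y\circ Z)=\tilde{\nabla}_{Y}(\tilde{\mathcal C}_{V})(X\circ Z)\quad\forall X,Y,Z\in\mathcal X(M). \qquad(\ast)
\]

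Finally I would prove $(\ast)\Leftrightarrow$ \eqref{flat-special} by observing that $\tilde{\nabla}_{e}(\tilde{\mathcal C}_{V})$ is itself a multiplication operator. Indeed, using total symmetry of $\tilde{\nabla}(\circ)$ together with \eqref{sym}, one computes
\[
\tilde{\nabla}_{e}(\tilde{\mathcal C}_{V})(W)=W_{0}\circ W,\qquad W_{0}:=\tilde{\nabla}_{e}V-\tilde{\nabla}_{e}(e)\circ V.
\]
If $(\ast)$ holds, setting $Y=e$ gives $\tilde{\nabla}_{X}(\tilde{\mathcal C}_{V})(Z)=W_{0}\circ X\circ Z$, from which \eqref{flat-special} follows immediately. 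Conversely, if \eqref{flat-special} holds, setting $Z=e$ yields the same formula $\tilde{\nabla}_{X}(\tilde{\mathcal C}_{V})(Y)=W_{0}\circ X\circ Y$, and commutativity of $\circ$ then implies $(\ast)$. The main obstacle is the bookkeeping in the derivation of the boxed curvature identity: one has to track roughly a dozen terms produced by the two applications of compatibility and verify that every third‑order derivative cancels except for those repackaging into $\tilde{\nabla}(\tilde{\mathcal C}_{V})$ evaluated on a $\circ$-product; once that identity is established, the rest of the argument is essentially formal.
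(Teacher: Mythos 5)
Your proof is correct and follows essentially the same route as the paper: both compute $R^{\tilde{\nabla}^{V}}$ as $R^{\tilde{\nabla}}$ plus a correction term and read the flatness criterion off its vanishing. The only difference is cosmetic: the paper writes the correction directly as $\tilde{\nabla}_{X}(\tilde{\mathcal C}_{V})(Y)\circ Z-\tilde{\nabla}_{Z}(\tilde{\mathcal C}_{V})(Y)\circ X$ (its relation (\ref{curburi})), so condition (\ref{flat-special}) drops out immediately, whereas your form $\tilde{\nabla}_{X}(\tilde{\mathcal C}_{V})(Y\circ Z)-\tilde{\nabla}_{Y}(\tilde{\mathcal C}_{V})(X\circ Z)$ --- equal to the paper's by total symmetry of $\tilde{\nabla}(\circ)$ --- requires your additional (correct) equivalence step via the operator $W_{0}$.
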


\begin{proof}
With no flatness assumptions,
one can show that the curvatures of two compatible, torsion-free
connections $\tilde{\nabla}^{V}$ and $\tilde{\nabla}$ like in
(\ref{nabla-flat}), are related by
\begin{align}
\nonumber R^{\tilde{\nabla}^{V}}_{X, Z}Y = R^{\tilde{\nabla}}_{X,
Z}Y &+ \left( \tilde{\nabla}_{X}(V\circ Y) -
V\circ \tilde{\nabla}_{X}(Y)\right)\circ Z\\
\label{curburi}& - \left( \tilde{\nabla}_{Z}(V\circ Y) -
V\circ\tilde{\nabla}_{Z}(Y)\right)\circ X,
\end{align}
for any vector fields $X, Y, Z$. Thus, if $\tilde{\nabla}$ is
flat then $\tilde{\nabla}^{V}$ is also flat if and only if
(\ref{flat-special}) holds.
\end{proof}

Given a special family which
contains a flat connection, it is natural to ask when the dual
family has this property, too. The answer is given in the
following theorem, which is our main result from this section.

\begin{thm}\label{curbura-E} Let $(M, \circ , e, {\mathcal E},
\tilde{\mathcal S})$ be an $F$-manifold with an eventual identity
$\mathcal E$ and a special family of connections $\tilde{\mathcal
S}.$ Assume that there is $\tilde{\nabla}\in \tilde{\mathcal S}$
which is flat. Then the dual family ${\mathcal S}={\mathcal D}_{\mathcal
E}(\tilde{\mathcal S})$ on the
dual $F$-manifold $(M, *, {\mathcal E})$ contains a flat
connection if and only if there is a vector field $\tilde{W}$, such that,
for any $X, Y, Z\in {\mathcal X}(M)$,
\begin{equation}\label{x-c}
\left( \tilde{\nabla}^{2}_{X, Y}({\mathcal E}) -
\tilde{\nabla}_{X}(\tilde{\mathcal C}_{\tilde{W}})(Y)\right) \circ Z = \left( \tilde{\nabla}^{2}_{Z,
Y}({\mathcal E}) - \tilde{\nabla}_{Z}(\tilde{\mathcal C}_{\tilde{W}})(Y)\right) \circ X.
\end{equation}
If (\ref{x-c}) holds then the connection
\begin{equation}\label{e-v}
\nabla^{W}_{X}(Y):= {\mathcal E}\circ \tilde{\nabla}_{X}({\mathcal
E}^{-1}\circ Y ) -\tilde{\nabla}_{{\mathcal E}^{-1}\circ
Y}({\mathcal E})\circ X+ W*X*Y,
\end{equation}
with $W:= \tilde{W}\circ {\mathcal E}$,
is flat (and belongs to ${\mathcal D}_{\mathcal E}(\tilde{\mathcal S})$).
\end{thm}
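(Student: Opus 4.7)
The plan is to compute the curvature of a general $\nabla^{W}\in\mathcal S$ via the formula derived in the proof of Theorem \ref{lorenz-duality}, and then to show that its vanishing is equivalent, after some algebraic bookkeeping, to condition (\ref{x-c}).

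By Corollary \ref{main-cor}, every connection in $\mathcal S=\mathcal D_{\mathcal E}(\tilde{\mathcal S})$ has the form (\ref{e-v}); equivalently, by Theorem \ref{main}, it may be written as $\nabla^{A}$ in the form (\ref{connection-v}) with
\[
A(Y)=\mathcal E \circ \tilde{\nabla}_{Y}(\mathcal E^{-1}) + V\circ Y,
\]
where the vector field $V$ corresponds to $W$ via Lemma \ref{lem-aux}. The curvature formula (\ref{read}) was established with no flatness or eventual-identity assumptions, so under our hypothesis $R^{\tilde{\nabla}}=0$ it collapses to
\[
R^{\nabla^{A}}_{Y,Z}X = -Q(Y,X)\circ Z + Q(Z,X)\circ Y.
\]

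Next I would reduce flatness of $\nabla^{W}$ to a factorization property of $Q$. Vanishing of $R^{\nabla^{A}}$ for all $X,Y,Z$ is equivalent to
\[
Q(Y,X)\circ Z = Q(Z,X)\circ Y, \qquad \forall X,Y,Z \in \mathcal X(M).
\]
Setting $Y=e$ yields $Q(Z,X)=Q(e,X)\circ Z$, so the condition is equivalent to the existence of a section $S$ of $\mathrm{End}(TM)$ with $Q(Y,X) = S(X)\circ Y$; the converse is immediate from the commutativity of $\circ$. Hence flatness of $\nabla^{W}$ amounts to a factorization of $Q$ through its first slot, and the question becomes: for which $V$ (equivalently $W$, equivalently $\tilde W$) does this factorization occur? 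Note that by Lemma \ref{nabla-lema} applied on the dual side, the set of flat connections in $\mathcal S$, if nonempty, is in a controlled correspondence with the admissible $\tilde W$'s, so it suffices to exhibit one such $W$.

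I would then expand $Q(Y,X)$ explicitly using $A(Y)=\mathcal E\circ\tilde{\nabla}_{Y}(\mathcal E^{-1})+V\circ Y$. The genuinely second-order contribution comes from $\mathcal E\circ\tilde{\nabla}_{Y}(\mathcal E^{-1}\circ A(X))$, producing $\mathcal E\circ\tilde{\nabla}^{2}_{Y,X}(\mathcal E^{-1})$; differentiating $\mathcal E\circ\mathcal E^{-1}=e$ twice by the Leibniz rule and using that $\tilde{\nabla}$ is compatible with $\circ$ rewrites this in terms of $\tilde{\nabla}^{2}_{Y,X}(\mathcal E)$ modulo first-order data. The $V$-dependent pieces of $Q$, after repeated use of the torsion-freeness of $\tilde{\nabla}$ and the symmetry of $\tilde{\nabla}(\circ)$, collect into $\tilde{\nabla}_{Y}(\tilde{\mathcal C}_{\tilde{W}})(X)$ modulo terms of the form $(\,\cdot\,)\circ Y$ which are harmlessly absorbed into $S(X)$; the required correspondence is precisely $W=\tilde W\circ\mathcal E$. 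After this identification, the factorization $Q(Y,X)=S(X)\circ Y$ becomes, up to relabelling $X\leftrightarrow Y$ and an overall multiplication by $\mathcal E$, the statement that $\bigl(\tilde{\nabla}^{2}_{X,Y}(\mathcal E)-\tilde{\nabla}_{X}(\tilde{\mathcal C}_{\tilde W})(Y)\bigr)\circ Z$ is symmetric in $X\leftrightarrow Z$, which is exactly (\ref{x-c}).

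The main obstacle is this last piece of bookkeeping: separating, inside the expansion of $Q(Y,X)$, the genuinely second-order contribution in $\mathcal E$ from the first-order terms and the $V$-dependent ones, and correctly identifying which of the latter combine into $\tilde{\nabla}(\tilde{\mathcal C}_{\tilde W})$ and which are absorbed into $S(X)$. Once this identification is in place, both directions of the theorem follow at once: for necessity, from a flat $\nabla^{W}\in\mathcal S$ one reads off $\tilde W=W\circ\mathcal E^{-1}$ satisfying (\ref{x-c}); for sufficiency, given $\tilde W$ with (\ref{x-c}) one runs the computation backwards to deduce the factorization of $Q$, hence $R^{\nabla^{W}}=0$ for $W=\tilde W\circ\mathcal E$, and this connection belongs to $\mathcal D_{\mathcal E}(\tilde{\mathcal S})$ by construction via (\ref{e-v}).
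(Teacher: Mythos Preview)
Your overall strategy is sound and genuinely different from the paper's. You route everything through formula (\ref{read}) and the tensor $Q$, reducing flatness of $\nabla^{W}$ (once $R^{\tilde\nabla}=0$) to the factorization $Q(Y,X)=S(X)\circ Y$; this reduction is correct, and the substitution $Y=e$ argument is clean. The paper instead decomposes $\nabla^{W}=\nabla^{\mathcal F}+W*(\cdot)*(\cdot)$, uses (\ref{curburi}) for the $W$-piece, and computes $R^{\nabla^{\mathcal F}}$ directly on a local frame of $\tilde\nabla$-parallel fields, arriving at the closed formula
\[
R^{\nabla^{\mathcal F}}_{Z,X}(\mathcal E\circ Y)=\tilde\nabla^{2}_{X,Y}(\mathcal E)\circ Z-\tilde\nabla^{2}_{Z,Y}(\mathcal E)\circ X,
\]
which makes the appearance of $\tilde\nabla^{2}\mathcal E$ transparent and immediately yields (\ref{curv-fin}). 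The flat-frame computation buys the paper a clean separation: the second-order term in $\mathcal E$ comes entirely from $R^{\nabla^{\mathcal F}}$, and the $\tilde W$-term comes entirely from the (\ref{curburi}) correction, so no delicate cancellations are needed.

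Your route, by contrast, packs everything into $Q$ and then has to unpack it. The step you flag as ``bookkeeping'' is in fact the entire analytic content of the proof: you must expand $Q(Y,X)$ for $A(Y)=\mathcal E\circ\tilde\nabla_{Y}(\mathcal E^{-1})+V\circ Y$, convert $\mathcal E\circ\tilde\nabla^{2}(\mathcal E^{-1})$ into $\tilde\nabla^{2}(\mathcal E)$ via two Leibniz differentiations of $\mathcal E\circ\mathcal E^{-1}=e$, track the precise relation $V=\tilde W\circ\mathcal E^{-1}-\bigl(\tfrac12(\tilde\nabla_{\mathcal E^{-1}}\mathcal E+\tilde\nabla_{\mathcal E}\mathcal E^{-1})+\tilde\nabla_{e}e\bigr)$ coming from Lemma \ref{lem-aux}, and verify that every leftover first-order term is of the form $(\cdot)\circ Y$. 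None of this is done, only asserted. The computation is feasible and your outline is correct, but as written the proposal is a plan rather than a proof; the paper's flat-frame approach avoids most of this algebra and would be worth comparing once you carry your expansion through.
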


\begin{proof} Let $W$ be a vector field and $\nabla^{W}$ the
connection defined by (\ref{e-v}). We need to compute the
curvature of $\nabla^{W}$ (knowing that $\tilde{\nabla}$ is flat).
For this, note that
$$
\nabla^{W}_{X}(Y) = \nabla^{\mathcal F}_{X}(Y) + W* X* Y
$$
where $\nabla^{\mathcal F}$ is the second structure connection of
$(M, \circ , e,{\mathcal E}, \tilde{\nabla})$. Since both
$\nabla^{W}$ and $\nabla^{\mathcal F}$ are torsion-free and
compatible with $*$, their curvatures are related by
(\ref{curburi}), with $\circ$ replaced by $*$ and $V$ replaced by
$W$. Thus,
\begin{align*}
R^{\nabla^{W}}_{Z, X}Y= R^{\nabla^{\mathcal F}}_{Z, X}Y &+\left(
\nabla^{\mathcal F}_{Z}(W* Y)
- W* \nabla^{\mathcal F}_{Z}(Y)\right)* X\\
&- \left( \nabla^{\mathcal F}_{X}(W* Y)
- W* \nabla^{\mathcal F}_{X}(Y)\right)* Z\\
 = R^{\nabla^{\mathcal F}}_{Z, X}(Y) &+ \left( {\nabla}^{\mathcal
F}_{Z}({\mathcal E}^{-1}\circ W\circ Y) - {\mathcal E}^{-1}\circ
W\circ {\nabla}^{\mathcal F}_{Z}(Y)\right)\circ X\circ
{\mathcal E}^{-1}\\
&- \left( {\nabla}^{\mathcal F}_{X}({\mathcal E}^{-1}\circ W\circ
Y) - {\mathcal E}^{-1}\circ W\circ {\nabla}^{\mathcal
F}_{X}(Y)\right)\circ Z\circ {\mathcal E}^{-1}.
\end{align*}
Using the definition (\ref{H}) of $\nabla^{\mathcal F}$ to compute
the right hand side of the above relation, we get
\begin{align*}
R^{\nabla^{W}}_{Z, X} Y=  R^{\nabla^{\mathcal F}}_{Z,
X}Y &+\left( \mathcal E\circ \tilde{\nabla}_{Z}(W\circ Y \circ
\mathcal E^{-2}) - W\circ \tilde{\nabla}_{Z}(\mathcal E^{-1}\circ
Y)\right)\circ X\circ \mathcal E^{-1}\\
&-\left( \mathcal E\circ \tilde{\nabla}_{X}(W\circ
Y \circ \mathcal E^{-2}) - W\circ \tilde{\nabla}_{X}(\mathcal
E^{-1}\circ Y)\right)\circ Z\circ \mathcal E^{-1},
\end{align*}
or, by replacing $Y$ with $\mathcal E\circ Y$,
\begin{equation}\label{curv-na}
 R^{\nabla^{W}}_{Z,X}({\mathcal E}\circ Y) = R^{\nabla^{\mathcal F}}_{Z,X} ({\mathcal E}\circ Y) +\tilde{\nabla}_{Z} (\tilde{\mathcal C}_{W\circ {\mathcal E}^{-1}} )(Y)\circ X
- \tilde{\nabla}_{X} (\tilde{\mathcal C}_{W\circ {\mathcal E}^{-1}}) (Y)\circ Z,
\end{equation}
for any vector fields $X, Y, Z\in {\mathcal X}(M).$ We now compute
the curvature of $\nabla^{\mathcal F}$. For this, let $Y$ be a
local $\tilde{\nabla}$-flat vector field and $\tilde{Y}:=
\tilde{\nabla}_{Y}({\mathcal E})$. Then, for any $X, Z\in
{\mathcal X}(M)$,
$$
\nabla^{\mathcal F}_{X}({\mathcal E}\circ Y) = - \tilde{Y}\circ X
$$
and
\begin{equation}\label{na-f}
\nabla^{\mathcal F}_{Z}\nabla^{\mathcal F}_{X}({\mathcal E}\circ
Y) = -{\mathcal E}\circ \tilde{\nabla}_{Z}({\mathcal
E}^{-1}\circ\tilde{Y}\circ X) +\tilde{\nabla}_{\mathcal
E^{-1}\circ \tilde{Y}\circ X}({\mathcal E}) \circ Z.
\end{equation}
Since $\mathcal E$ is an eventual identity, we can apply Lemma
\ref{lem-aux} to compute the second term in the right hand side of
(\ref{na-f}). We get:
\begin{align*}
\tilde{\nabla}_{\mathcal E^{-1}\circ \tilde{Y} \circ X}({\mathcal
E}) = & -{\mathcal E}\circ \tilde{\nabla}_{\tilde{Y} \circ X}
({\mathcal E}^{-1})+\frac{1}{2}\left(\tilde{\nabla}_{\mathcal
E^{-1}} ( {\mathcal E}) + \tilde{\nabla}_{\mathcal E}( {\mathcal
E}^{-1})\right) \circ
\tilde{Y} \circ X\\
&+\tilde{\nabla}_{e}(e)\circ \tilde{Y} \circ X
\end{align*}
and therefore
\begin{align*}
\nabla^{\mathcal F}_{Z}\nabla^{\mathcal F}_{X}({\mathcal E}\circ
Y)  =& -{\mathcal E}\circ \tilde{\nabla}_{Z}({\mathcal
E}^{-1}\circ\tilde{Y}\circ X) -{\mathcal E}\circ
\tilde{\nabla}_{\tilde{Y} \circ X}
({\mathcal E}^{-1})\circ Z\\
& +\frac{1}{2}\left(\tilde{\nabla}_{\mathcal E^{-1}}( {\mathcal
E}) + \tilde{\nabla}_{\mathcal E}({\mathcal E}^{-1})\right)\circ
\tilde{Y} \circ X\circ Z\\
& + \tilde{\nabla}_{e}(e) \circ \tilde{Y} \circ X\circ Z.
\end{align*}
Assume now that $X$ and $Z$ are both $\tilde{\nabla}$-flat. Since
$\tilde{\nabla}$ is torsion-free, $[X, Z]=0$ and the above
relation gives, by skew-symmetrizing in $Z$ and $X$,
\begin{align*}
R^{\nabla^{\mathcal F}}_{Z, X}({\mathcal E}\circ Y) &= {\mathcal
E}\circ\left( \tilde{\nabla}_{X} ({\mathcal
E}^{-1}\circ\tilde{Y}\circ Z) -\tilde{\nabla}_{\tilde{Y}\circ
X} ({\mathcal E}^{-1})\circ Z\right)\\
&-{\mathcal E}\circ\left( \tilde{\nabla}_{Z} ({\mathcal
E}^{-1}\circ\tilde{Y}\circ X) - \tilde{\nabla}_{\tilde{Y}\circ Z}
({\mathcal E}^{-1})\circ X\right)
\end{align*}
or, using the $\tilde{\nabla}$-flatness of $X$, $Z$ and the total
symmetry of $\tilde{\nabla}$,
\begin{align*}
R^{\nabla^{\mathcal F}}_{Z, X}({\mathcal E}\circ Y) & = {\mathcal
E}\circ \left( \tilde{\nabla}_{X}({\mathcal E}^{-1}\circ
\tilde{Y})- \tilde{\nabla}_{\tilde{Y}\circ
X}({\mathcal E}^{-1})\right) \circ Z\\
& - {\mathcal E}\circ \left( \tilde{\nabla}_{Z}({\mathcal
E}^{-1}\circ \tilde{Y})- \tilde{\nabla}_{\tilde{Y}\circ
Z}({\mathcal E}^{-1})\right) \circ X.
\end{align*}
We now simplify the right hand side of this expression. Define
$$
E(X, \tilde{Y}) = \tilde{\nabla}_{X}({\mathcal E}^{-1}\circ
\tilde{Y} )-\tilde{\nabla}_{\tilde{Y}\circ X} ({\mathcal E}^{-1}).
$$
Then
\begin{align*}
E(X, \tilde{Y})&= \tilde{\nabla}_{X} ({\mathcal E}^{-1})\circ
\tilde{Y} + {\mathcal E}^{-1}\circ\tilde{\nabla}_{X}(\tilde{Y})
+\tilde{\nabla}_{\mathcal E^{-1}}(\circ
)(X,\tilde{Y})-\tilde{\nabla}_{\tilde{Y}\circ X}({\mathcal E}^{-1})\\
&= \tilde{\nabla}_{X} ({\mathcal E}^{-1})\circ \tilde{Y}+
{\mathcal E}^{-1}\circ\tilde{\nabla}_{X}(\tilde{Y}) +
\tilde{\nabla}_{\mathcal E^{-1}} (\tilde{Y}\circ X) -X\circ
\tilde{\nabla}_{\mathcal E^{-1}}
(\tilde{Y}) -\tilde{\nabla}_{\tilde{Y}\circ X}({\mathcal E}^{-1})\\
&= \tilde{\nabla}_{X} ({\mathcal E}^{-1})\circ\tilde{Y}+ {\mathcal
E}^{-1}\circ\tilde{\nabla}_{X}(\tilde{Y})
+L_{\mathcal E^{-1}}(\tilde{Y}\circ X)-X\circ\tilde{\nabla}_{\mathcal E^{-1}}(\tilde{Y})\\
&= {\mathcal E}^{-1}\circ\tilde{\nabla}_{X}(\tilde{Y}) + \left(
[e, {\mathcal E}^{-1}]\circ \tilde{Y} -\tilde{\nabla}_{\tilde{Y}}
({\mathcal E}^{-1})\right) \circ X,
\end{align*}
where in the first equality we used the symmetry of
$\tilde{\nabla}(\circ )$; in the third equality we used the
torsion-free property of $\tilde{\nabla}$; in the fourth equality
we used that $\mathcal E^{-1}$ is an eventual identity, the
$\tilde{\nabla}$-flatness of $X$ and again the torsion-free
property of $\tilde{\nabla }.$ Since
$\tilde{\nabla}_{X}(\tilde{Y}) =\tilde{\nabla}^{2}_{X,Y}({\mathcal
E})$ ($Y$ being $\tilde{\nabla}$-flat) and similarly
$\tilde{\nabla}_{Z}(\tilde{Y})= \tilde{\nabla}^{2}_{Z,Y}({\mathcal
E})$, we get
\begin{equation}\label{curv-f}
R^{{\nabla}^{\mathcal F}}_{Z,X} ({\mathcal E}\circ Y) =
\tilde{\nabla}^{2}_{X, Y}({\mathcal E}) \circ Z -
\tilde{\nabla}^{2}_{Z,Y}({\mathcal E}) \circ X.
\end{equation}
Combining (\ref{curv-na}) with (\ref{curv-f}) we finally obtain
\begin{align}
\nonumber R^{\nabla^{W}}_{Z, X} ({\mathcal E}\circ Y) &= \left(
\tilde{\nabla}^{2}_{X,Y}({\mathcal E})-
\tilde{\nabla}_{X}(\tilde{\mathcal C}_{W\circ {\mathcal E}^{-1}})(Y)\right)\circ Z\\
\label{curv-fin} &-\left( \tilde{\nabla}^{2}_{Z,Y}({\mathcal E})-
\tilde{\nabla}_{Z}(\tilde{\mathcal C}_{W\circ {\mathcal
E^{-1}}})(Y)\right)\circ X,
\end{align}
for any $\tilde{\nabla}$-flat vector fields $X, Y, Z$. Since
$\tilde{\nabla}$ is flat, relation (\ref{curv-fin}) holds for any
$X, Y, Z\in {\mathcal X}(M)$ (not necessarily flat). Our claim
follows.

\end{proof}

We end this section with several comments and remarks.

\begin{rem}\label{spec2}{\rm i) Condition (\ref{flat-special}) is clearly satisfied when $V$ is the unit field or
a (constant) multiple of the unit field of the $F$-manifold. Therefore, if
$\tilde{\nabla}$ is a compatible, torsion-free connection on an
$F$-manifold $(M, \circ , e)$ and the pencil
$$
\tilde{\nabla}^{z}_{X}(Y) = \tilde{\nabla}_{X}(Y) + z X\circ Y
$$
($z$-constant) contains a flat connection, then all connections
from this pencil are flat. Such pencils of flat torsion-free
connections appear naturally on Frobenius manifolds. More generally, it can be checked that on a semi-simple $F$-manifold $(M, \circ , e)$
with canonical coordinates $(u^{1},\cdots , u^{n})$,  a
vector field $V =\sum_{k=1}^{n}
V^{k}\frac{\partial}{\partial u^{k}}$ satisfies
(\ref{flat-special}) if and only if
$$
 V^{j} = V^{j}(u^{j}),\quad \Gamma_{ii}^{j} (V^{i}- V^{j}) =0,
\quad \forall i,j
 $$
 where $V^{j}$ are  functions depending on $u^{j}$ only
 and $\Gamma_{ij}^{k}$ are the Christoffel symbols of $\tilde{\nabla}$ in the coordinate system
$(u^{1},\cdots , u^{n}).$\\

ii) We claim  that condition (\ref{x-c}) from Theorem
\ref{curbura-E} is independent of the choice of flat connection
$\tilde{\nabla}$ from $\tilde{\mathcal S}.$ To prove this claim, let $(M, \circ ,e, {\mathcal E})$ be an $F$-manifold
and $\tilde{\mathcal C}_{X}(Y):= X\circ Y$ the associated Higgs field.
Let $\mathcal E$ be an eventual identity and $\tilde{\nabla}$,
$\tilde{\nabla}^{V}$ any two compatible, torsion-free, flat
connections on $(M, \circ ,e)$, related by (\ref{n-v}).
A long but straightforward computation
which  uses (\ref{flat-special})
shows that
\begin{equation}\label{v}
(\tilde{\nabla}^{V})^{2}_{X,Y}({\mathcal E})\circ Z -
(\tilde{\nabla}^{V})^{2}_{Z,Y}({\mathcal E})\circ X =
\tilde{\nabla}^{2}_{X,Y}({\mathcal E})\circ Z -
\tilde{\nabla}^{2}_{Z,Y}({\mathcal E})\circ X,
\end{equation}
for any vector fields $X, Y, Z$.
On the other hand, it is easy to see that
\begin{equation}\label{v-1}
\tilde{\nabla}^{V}_{X}(\tilde{\mathcal C}_{S})(Y)\circ Z -
\tilde{\nabla}^{V}_{Z}(\tilde{\mathcal C}_{S})(Y)\circ X=
\tilde{\nabla}_{X}(\tilde{\mathcal C}_{S})(Y)\circ Z -
\tilde{\nabla}_{Z}(\tilde{\mathcal C}_{S})(Y)\circ X,
\end{equation}
for any vector fields $X, Y, Z, S$.
Using (\ref{v}) and (\ref{v-1})  we deduce
that (\ref{x-c}) holds for $\tilde{\nabla}^{V}$ if and only if it holds for $\tilde{\nabla}.$\\

iii) In the setting of Theorem
\ref{curbura-E} assume that the initial $F$-manifold $(M,\circ , e)$ underlies a Frobenius manifold
 $(M, \circ , e, E,\tilde{g})$, ${\mathcal E}= E$ is the Euler field (assumed to be invertible) and $\tilde{\nabla}$ is the Levi-Civita connection of $\tilde{g}.$
Since $\tilde{\nabla}^{2}E=0$, by applying Proposition \ref{curbura-E} with $\tilde{W}=0$, we recover the well-known fact that the second structure connection
of  $(M, \circ , e, E,\tilde{g})$  is flat.}
\end{rem}

\section{Duality and Legendre transformations}\label{Legendre}

We now adopt the abstract point of view of external bundles to
construct $F$-manifolds with compatible, torsion-free connections
(see Section \ref{external}). This construction is
particularly suitable in the setting of special families of
connections. Then we
consider the particular case when the external bundle is the
tangent bundle of an $F$-manifold with a special family of
connections and we define and study the notions of Legendre (or primitive)
field and Legendre transformation of a special family (see Section \ref{legendre-special-sect}). Finally, we show that our
duality for $F$-manifolds with eventual identities and special
families of connections commutes with the Legendre transformations
so defined (see Section \ref{dual-thm-sect}).

\subsection{External bundles and $F$-manifolds}\label{external}

Let $V\rightarrow M$ be a vector bundle  (sometimes called
an external bundle), $D$ a connection on $V$ and
$A\in\Omega^{1}(M, \mathrm{End}V)$ such that
\begin{equation}\label{cond1}
(d^{D}A)_{X, Y}:= D_{X}(A_{Y})- D_{Y}(A_{X}) - A_{[X,Y]}=0
\end{equation}
and
\begin{equation}\label{cond2}
A_{X}A_{Y}= A_{Y}A_{X}
\end{equation}
for any vector fields $X, Y\in {\mathcal X}(M)$. Let $u$ be a
section of $V$ such that
\begin{equation}\label{cond3}
A_{Y}(D_{Z}u) = A_{Z}(D_{Y}u),\quad \forall Y, Z\in {\mathcal
X}(M).
\end{equation}
Assume that the map
\begin{equation}\label{f-a}
F:TM\rightarrow V, \quad F(X) := A_{X}(u)
\end{equation}
is a bundle isomorphism. In this setting, the following
proposition holds (see \cite{manin} for the flat case).

\begin{prop}\label{manin-r} i) The multiplication
\begin{equation}\label{multiplication}
X\circ Y = F^{-1}\left(A_{X}A_{Y}u\right) , \quad X, Y\in {\mathcal X}(M)
\end{equation}
is commutative, associative, with unit field $F^{-1}(u).$\\

ii) The pull-back connection $F^{*}D$ is torsion-free and
compatible with $\circ .$ In particular, $(M, \circ , F^{-1}(u))$
is an $F$-manifold.
\end{prop}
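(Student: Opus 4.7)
The approach is to set $\tilde{\nabla} := F^{*}D$, i.e.\ $\tilde{\nabla}_{X}Y := F^{-1}(D_{X}F(Y))$, and to verify the claims in order: commutativity, unit, associativity in \textit{i)}, then torsion-freeness and compatibility in \textit{ii)}.

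The algebraic claims in \textit{i)} are essentially formal. Commutativity of $\circ$ is immediate from (\ref{cond2}). The unit property for $e := F^{-1}(u)$ follows from $A_{e}u = F(e) = u$ together with (\ref{cond2}): $e \circ X = F^{-1}(A_{e}A_{X}u) = F^{-1}(A_{X}A_{e}u) = F^{-1}(A_{X}u) = X$. For associativity, the identity $F(X \circ Y) = A_{X}A_{Y}u$ combined with commutativity of the $A_{W}$'s yields $F((X \circ Y) \circ Z) = A_{Z}A_{X \circ Y}u = A_{Z}A_{X}A_{Y}u = A_{X}A_{Y}A_{Z}u = F(X \circ (Y \circ Z))$.

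For \textit{ii)}, torsion-freeness is a short computation: applying $F$ to $\tilde{\nabla}_{X}Y - \tilde{\nabla}_{Y}X - [X,Y]$ and expanding via the Leibniz rule $D_{X}(A_{Y}u) = (D_{X}A_{Y})u + A_{Y}D_{X}u$ gives
$$
(D_{X}A_{Y} - D_{Y}A_{X} - A_{[X,Y]})\,u \;+\; (A_{Y}D_{X}u - A_{X}D_{Y}u),
$$
the first term killed by (\ref{cond1}) and the second by (\ref{cond3}). The main calculation is compatibility. I would push $\tilde{\nabla}_{X}(\circ)(Y,Z)$ through $F$, using $F(Y \circ Z) = A_{Y}A_{Z}u$ and expanding with the Leibniz rule, and after collecting terms (the $A_{Y}(D_{X}A_{Z})u$ contributions cancel between $\tilde{\nabla}_{X}(Y \circ Z)$ and $Y \circ \tilde{\nabla}_{X}Z$) arrive at
$$
F(\tilde{\nabla}_{X}(\circ)(Y,Z)) = [D_{X}A_{Y},\,A_{Z}]\,u \;-\; A_{Y}A_{Z}D_{X}u.
$$
Antisymmetrizing in $X,Y$, the commutator difference $[D_{X}A_{Y} - D_{Y}A_{X},\,A_{Z}] = [A_{[X,Y]},\,A_{Z}]$ vanishes by (\ref{cond1}) and (\ref{cond2}), while $A_{X}A_{Z}D_{Y}u - A_{Y}A_{Z}D_{X}u = A_{Z}(A_{X}D_{Y}u - A_{Y}D_{X}u)$ vanishes by (\ref{cond3}). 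This gives total symmetry $\tilde{\nabla}_{X}(\circ)(Y,Z) = \tilde{\nabla}_{Y}(\circ)(X,Z)$, i.e.\ compatibility of $\tilde{\nabla}$ with $\circ$. The $F$-manifold conclusion then follows from Lemma 4.3 of \cite{hert-paper} quoted in Section \ref{eventualidentity-sect}.

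The main obstacle is bookkeeping in the compatibility step: making sure each of the three hypotheses on the pair $(A,u)$ is invoked exactly where needed, with correct signs, amidst several overlapping Leibniz expansions. No step is deep, but the calculation is long enough that one wants to organize it carefully --- in particular separating the $A$-derivative terms (controlled by $d^{D}A = 0$ and commutativity of the $A_{W}$'s) from the $Du$-terms (controlled by (\ref{cond3})) before attempting the antisymmetrization.
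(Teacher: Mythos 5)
Your proof is correct and follows essentially the same route as the paper: the algebraic part of \textit{i)} via $F(X\circ Y)=A_{X}A_{Y}u$ and commutativity of the $A$'s, torsion-freeness from (\ref{cond1}) and (\ref{cond3}), and compatibility from $d^{D}A=0$. The only organizational difference is in the compatibility step: the paper computes $(d^{F^{*}D}\tilde{\mathcal C})_{X,Y}(Z)=F^{-1}\bigl((d^{D}A)_{X,Y}F(Z)\bigr)=0$ and then invokes the identity (\ref{torsion}) from the preliminaries (so only (\ref{cond1}) is needed at that stage), whereas you carry out the Leibniz expansion of $\tilde{\nabla}_{X}(\circ)(Y,Z)$ by hand and re-invoke (\ref{cond2}) and (\ref{cond3}) in the antisymmetrization; both computations are valid and reach the same conclusion.
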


\begin{proof}
It is easy to check, using (\ref{cond2}) and the bijectivity of
$F$, that
\begin{equation}\label{rel-u}
A_{X\circ Y} = A_{X}A_{Y},\quad \forall X,Y\in {\mathcal X}(M),
\end{equation}
which readily implies, from (\ref{cond2}) and the bijectivity of
$F$ again, the commutativity and associativity of $\circ .$ From
(\ref{f-a}),
\begin{equation}\label{identity}
A_{F^{-1}(v)}(u) = v,\quad \forall v\in V
\end{equation}
and, for any $X\in {\mathcal X}(M)$,
$$
X\circ F^{-1}(u) = F^{-1}\left( A_{X} A_{F^{-1}(u)}u\right) =
F^{-1} \left( A_{X}(u)\right) =X,
$$
i.e. $F^{-1}(u)$ is the unit field for $\circ .$ Claim {\it i)}
follows. For claim {\it ii)}, recall that the pull-back connection
$F^{*}D$ is defined by
$$
(F^{*}D)_{X}Y := F^{-1}D_{X}(F(Y)),\quad\forall X, Y\in {\mathcal
X}(M).
$$
To prove that $F^{*}D$ is torsion-free, let $X, Y\in {\mathcal
X}(M)$. Then
\begin{align*}
(F^{*}D)_{X}Y - (F^{*}D)_{Y}X&= F^{-1}\left( D_{X}(F(Y))
-D_{Y}(F(X))\right)\\
&= F^{-1}\left(D_{X} (A_{Y}u) -D_{Y} (A_{X}u)\right)\\
&= F^{-1}(A_{[X, Y]}u) = [X, Y],
\end{align*}
where we used (\ref{cond1}) and (\ref{cond3}). It remains to show
that $F^{*}D$ is compatible with $\circ .$ For this,  let
$\tilde{\mathcal C}$ be the $\mathrm{End}(TM)$-valued $1$-form
defined by
$$
\tilde{\mathcal C}_{X}(Y) = X\circ Y,\quad \forall X,Y\in
{\mathcal X}(M).
$$
Using (\ref{cond1}) and (\ref{f-a}), we get
\begin{equation}\label{ext-c-prime}
(d^{F^{*}D}\tilde{\mathcal C})_{X,Y}(Z) = F^{-1} (d^{D}A)_{X,Y}
F(Z) =0.
\end{equation}
This relation and the torsion-free property of $F^{*}D$ imply that
$F^{*}D$ is compatible with $\circ$ (see relations
(\ref{torsion}) and (\ref{ext-c}) from Section \ref{compatible-conn-prel}).
Relation (\ref{ext-c-prime})
also implies that $(M, \circ , F^{-1}(u))$ is an $F$-manifold
(from Lemma 4.3 of \cite{hert-paper} already mentioned in Section
\ref{compatible-conn-prel}). Our claim follows.
\end{proof}

It is worth to make some comments on Proposition \ref{manin-r}.

\begin{rem}{\rm i) The torsion-free property of $F^{*}D$ relies  on the
condition (\ref{cond3}) satisfied by $u$. Note that by dropping
(\ref{cond3}) from the setting of Proposition \ref{manin-r}, $(M,
\circ , F^{-1}(u))$ remains an $F$-manifold. The reason is that
relation (\ref{ext-c-prime}), which implies that $(M, \circ ,
F^{-1}(u))$ is an $F$-manifold, does not use (\ref{cond3}), but
only (\ref{cond1}).\\

ii) The pull-back connections
$F^{*}(D+zA)$ ($z$-constant) are given by
$$
F^{*}(D+zA)_{X}(Y)= (F^{*}D)_{X}(Y) + zX\circ Y,\quad\forall X,
Y\in {\mathcal X}(M)
$$
because
$$
(F^{*}A)_{X}(Y):= F^{-1}\left( A_{X}F(Y)\right) = F^{-1}\left(
A_{X}A_{Y}u\right) = X\circ Y.
$$
Thus, $F^{*}(D+zA)$ is a pencil of compatible, torsion-free
connections on the $F$-manifold $(M,\circ , F^{-1}(u))$. When $D$
is flat, all connections $D+zA$ are flat  and
we recover Theorem 4.3 of \cite{manin}
(which states that a pencil of flat connections on an external bundle $V\rightarrow M$ together with a primitive section - the section $u$
in our notations -  induces an $F$-manifold structure on $M$ together with a pencil of flat, torsion-free, compatible connections on this $F$-manifold).}
\end{rem}

\subsection{Legendre transformations and special families of connections}\label{legendre-special-sect}

We now apply the results from the previous section to the
particular case when $V$ is the tangent bundle of an $F$-manifold.
We begin with the following definition.

\begin{defn}\label{leg-def} i) A vector field $u$ on an
$F$-manifold $(M, \circ , e,\tilde{\mathcal S})$ with a special
family of connections $\tilde{\mathcal S}$  is called a Legendre
(or primitive) field if it is invertible and for one
(equivalently, any) $\tilde{\nabla}\in \tilde{\mathcal S}$,
\begin{equation}\label{lege}
\tilde{\nabla}_{X}(u)\circ Y = \tilde{\nabla}_{Y}(u)\circ X,\quad
\forall X, Y\in {\mathcal X}(M).
\end{equation}
ii) The family of connections
\begin{equation}\label{l-u}
\{ {\mathcal L}_{u}(\tilde{\nabla}):= u^{-1}\circ \tilde{\nabla}\circ u,
\quad \tilde{\nabla}\in \tilde{\mathcal S}\}
\end{equation}
is called the Legendre transformation of $\tilde{\mathcal S}$ by $u$
and is denoted by ${\mathcal L}_{u}(\tilde{\mathcal S})$.

\end{defn}

\begin{rem}\label{leg-rem}{\rm
The notions of Legendre field and Legendre transformation on
$F$-manifolds with special families of connections are closely
related to the corresponding notions from the theory of Frobenius
manifolds \cite{dubrovin}. The reason is that if $u$ is a Legendre
field on an $F$-manifold $(M, \circ , e,\tilde{\mathcal S})$ with
a special family $\tilde{\mathcal S}$, then there is a unique
connection $\tilde{\nabla}$ in $\tilde{\mathcal S}$ for which $u$
is parallel. On the other hand, recall that a Legendre field on a
Frobenius manifold $(M, \circ , e, E, \tilde{g})$ is, by definition, a parallel,
invertible vector field $\partial .$ It defines a new invariant
flat metric by
$$
g(X,Y) = \tilde{g}({\partial}\circ X, {\partial}\circ Y),
$$
see \cite{manin}. The passage from $\tilde{g}$ to $g$ is usually
called a Legendre-type transformation.  The Levi-Civita
connections of $\tilde{g}$ and $g$ are related by
$$
\nabla_{X}(Y) = \partial^{-1}\circ \tilde{\nabla}_{X}(\partial
\circ Y),\quad\forall X,Y\in {\mathcal X}(M),
$$
like in (\ref{l-u}).}
\end{rem}

In the next proposition we describe some basic properties of Legendre transformations.

\begin{prop}\label{leg} Let $(M, \circ ,  e,\tilde{\mathcal S}, u)$ be an
$F$-manifold with a special family of connections $\tilde{\mathcal
S}$ and a Legendre field $u$. The following facts hold:\\

i)  The Legendre transformation
${\mathcal L}_{u}(\tilde{\mathcal S})$ of $\tilde{\mathcal S}$ by
$u$ is also special on $(M, \circ , e).$\\

ii) If (any) connection $\tilde{\nabla}\in \tilde{\mathcal S}$ satisfies
the condition (\ref{lorenz}) from Section \ref{curvature-sect-1}, i.e.
\begin{equation}\label{statement}
V\circ R^{\tilde{\nabla}}_{Z,Y}X + Y\circ R^{\tilde{\nabla}}_{V,Z}X  + Z\circ R^{\tilde{\nabla}}_{Y,V}X=0,
\end{equation}
then the same is true for any connection from ${\mathcal L}_{u}(\tilde{\mathcal S}).$\\

iii)   If $\tilde{\mathcal S}$
contains a flat connection, then so does ${\mathcal
L}_{u}(\tilde{\mathcal S}).$

 \end{prop}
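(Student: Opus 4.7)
The plan is to recognize the Legendre transformation $\mathcal{L}_{u}(\tilde{\nabla})$ as a pullback connection and then invoke the external bundle framework of Proposition \ref{manin-r}, after which the curvature statements reduce to bookkeeping.

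First, I would set $F:TM\to TM$, $F(Y) := u\circ Y$, which is a bundle isomorphism since $u$ is invertible, with inverse $F^{-1}(W) = u^{-1}\circ W$. Then by definition
$$
\mathcal{L}_{u}(\tilde{\nabla})_{X}(Y) = u^{-1}\circ \tilde{\nabla}_{X}(u\circ Y) = F^{-1}\tilde{\nabla}_{X}F(Y) = (F^{*}\tilde{\nabla})_{X}(Y).
$$
To apply Proposition \ref{manin-r} I take $V := TM$, $D := \tilde{\nabla}$, $A := \tilde{\mathcal{C}}$ and primitive section $u$. Conditions (\ref{cond1})--(\ref{cond3}) are immediate: (\ref{cond1}) is the compatibility equation $d^{\tilde{\nabla}}\tilde{\mathcal{C}}=0$, (\ref{cond2}) is associativity of $\circ$, and (\ref{cond3}) is precisely the Legendre condition (\ref{lege}). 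Since $F^{-1}(\tilde{\mathcal{C}}_{X}\tilde{\mathcal{C}}_{Y}u) = u^{-1}\circ X\circ Y\circ u = X\circ Y$ and $F^{-1}(u)=e$, the induced $F$-manifold structure is the original one, and Proposition \ref{manin-r} ii) gives that $\mathcal{L}_{u}(\tilde{\nabla})$ is torsion-free and compatible with $\circ$.

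For claim i), I first observe that the Legendre condition (\ref{lege}) is independent of the choice of $\tilde{\nabla}\in\tilde{\mathcal{S}}$: for $\tilde{\nabla}^{V} = \tilde{\nabla} + V\circ\cdot\circ\cdot$, the extra terms $V\circ X\circ u\circ Y$ and $V\circ Y\circ u\circ X$ coincide, so (\ref{lege}) for $\tilde{\nabla}^{V}$ is equivalent to (\ref{lege}) for $\tilde{\nabla}$; this justifies the parenthetical remark in Definition \ref{leg-def}. A direct calculation then gives
$$
\mathcal{L}_{u}(\tilde{\nabla}^{V})_{X}(Y) = u^{-1}\circ\bigl(\tilde{\nabla}_{X}(u\circ Y) + V\circ X\circ u\circ Y\bigr) = \mathcal{L}_{u}(\tilde{\nabla})_{X}(Y) + V\circ X\circ Y,
$$
so $\mathcal{L}_{u}(\tilde{\mathcal{S}})$ is exactly the special family based at the compatible, torsion-free connection $\mathcal{L}_{u}(\tilde{\nabla})$.

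For ii) and iii), the crux is the pullback curvature formula
$$
R^{\mathcal{L}_{u}(\tilde{\nabla})}_{X,Y}(Z) = u^{-1}\circ R^{\tilde{\nabla}}_{X,Y}(u\circ Z),
$$
which is a direct consequence of $\mathcal{L}_{u}(\tilde{\nabla}) = F^{*}\tilde{\nabla}$. Claim iii) follows at once: if $\tilde{\nabla}\in\tilde{\mathcal{S}}$ is flat, so is $\mathcal{L}_{u}(\tilde{\nabla})\in\mathcal{L}_{u}(\tilde{\mathcal{S}})$. For claim ii), Theorem \ref{lorenz-duality} i) already tells us that (\ref{lorenz}) is a property of a special family rather than of an individual connection, both for $\tilde{\mathcal{S}}$ and $\mathcal{L}_{u}(\tilde{\mathcal{S}})$; thus it suffices to transfer the property from the fixed $\tilde{\nabla}$ to $\mathcal{L}_{u}(\tilde{\nabla})$. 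Substituting the curvature formula into the cyclic expression and factoring $u^{-1}$ out of each term (using commutativity and associativity of $\circ$) reduces (\ref{lorenz}) for $\mathcal{L}_{u}(\tilde{\nabla})$ to (\ref{lorenz}) for $\tilde{\nabla}$ with $X$ replaced by $u\circ X$, which holds by hypothesis.

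The only real obstacle is the conceptual step of identifying $\mathcal{L}_{u}$ as the pullback by $Y\mapsto u\circ Y$ so that Proposition \ref{manin-r} applies; once this is done, the remaining arguments are routine, since (\ref{lorenz}) is purely tensorial and transports cleanly, and flatness pulls back trivially.
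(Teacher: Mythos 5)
Your proof is correct and follows essentially the same route as the paper: identify $\mathcal{L}_{u}(\tilde{\nabla})$ as the pullback $F^{*}\tilde{\nabla}$ under $F(Y)=u\circ Y$, apply Proposition \ref{manin-r} to get claim i), and use the pullback curvature formula $R^{\mathcal{L}_{u}(\tilde{\nabla})}_{X,Y}Z=u^{-1}\circ R^{\tilde{\nabla}}_{X,Y}(u\circ Z)$ for claims ii) and iii). Your two additional checks --- that the Legendre condition (\ref{lege}) is independent of the choice of connection in $\tilde{\mathcal S}$, and that $\mathcal{L}_{u}(\tilde{\nabla}^{V})_{X}(Y)=\mathcal{L}_{u}(\tilde{\nabla})_{X}(Y)+V\circ X\circ Y$, so that $\mathcal{L}_{u}(\tilde{\mathcal S})$ is precisely the special family generated by $\mathcal{L}_{u}(\tilde{\nabla})$ --- are details the paper leaves implicit, and they tighten the argument.
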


\begin{proof}
For the first claim, we use Proposition \ref{manin-r}, with
$V:= TM$, $D:= \tilde{\nabla}$ any connection from $\tilde{\mathcal S}$,
$A:= \tilde{\mathcal C}$ the Higgs field, given by
\begin{equation}\label{f-a-2}
\tilde{\mathcal C}_{X}(Y):= X\circ Y, \quad \forall X, Y\in
{\mathcal X}(M),
\end{equation}
and $u$ the Legendre field. It is easy to check that conditions (\ref{cond1})-(\ref{cond3})
are satisfied. The map (\ref{f-a}) is given by
$$
F:TM\rightarrow TM,\quad F(X) = X\circ u
$$
and is an isomorphism because $u$ is invertible. The induced
multiplication (\ref{multiplication}) from Proposition
\ref{manin-r} coincides with $\circ $, because
$$
F^{-1}\left( A_{X}A_{Y}u\right) = F^{-1} (X\circ Y\circ u) =
X\circ Y, \quad\forall X, Y\in {\mathcal X}(M).
$$
From Proposition \ref{manin-r}, the connection
$$
F^{*}(\tilde{\nabla})_{X}(Y) = u^{-1}\circ \tilde{\nabla}_{X}( u\circ
Y) = {\mathcal L}_{u}(\tilde{\nabla})_{X}(Y),\quad \forall X, Y\in {\mathcal X}(M)
$$
is torsion-free and compatible with $\circ$. It also belongs to the Legendre transformation ${\mathcal L}_{u}(\tilde{\mathcal S})$ of $\tilde{\mathcal S}$.
It follows that ${\mathcal L}_{u}(\tilde{\mathcal S})$ is a special family on $(M, \circ , e).$
This proves our first claim.

For the second and third claims, we notice that the curvatures of $\tilde{\nabla}$ and ${\mathcal L}_{u}(\tilde{\nabla})$ are related by
\begin{equation}\label{curv-leg}
R^{{\mathcal L}_{u}(\tilde{\nabla})} _{X,Y} Z= u^{-1}\circ R^{\tilde{\nabla}}_{X,Y}(u\circ Z).
\end{equation}
Thus, if $\tilde{\nabla}$ is flat then so is ${\mathcal L}_{u}(\tilde{\nabla})$.
Similarly, if the curvature of $\tilde{\nabla}$  satisfies the condition (\ref{statement}), then, from (\ref{curv-leg}), also
$$
V\circ R^{{\mathcal L}_{u}(\tilde{\nabla})}_{Z,Y}X + Y\circ R^{{\mathcal L}_{u}(\tilde{\nabla})}_{V,Z}X  + Z\circ R^{{\mathcal L}_{u}(\tilde{\nabla})}_{Y,V}X=0.
$$
Our second and third claims follow.
\end{proof}

\subsection{Legendre transformations and eventual identities}\label{dual-thm-sect}

In this section we prove a compatibility property between Legendre transformations, eventual identities and
our duality for $F$-manifolds with eventual identities and special families of connections. It is stated as follows:

\begin{thm}\label{dual-thm} Let $(M, \circ , e,\tilde{\mathcal S}, u)$ be an
$F$-manifold with a special family of connections $\tilde{\mathcal
S}$ and a Legendre field $u$. Let $\mathcal E$ be an eventual
identity on $(M, \circ , e)$ and $(M, *, {\mathcal E}, e, \mathcal
S )$ the dual of $(M, \circ , e, {\mathcal E},\tilde{\mathcal
S})$, as in Theorem \ref{main-duality}. Then $\mathcal E\circ u$
is a Legendre field on $(M, *, {\mathcal E},{\mathcal S})$ and
\begin{equation}\label{legendre-eq}
{\mathcal L}_{\mathcal E\circ u}({\mathcal S}) = ({\mathcal
D}_{\mathcal E}\circ {\mathcal L}_{u})(\tilde{\mathcal S}),
\end{equation}
where $({\mathcal D}_{\mathcal E}\circ {\mathcal
L}_{u})(\tilde{\mathcal S})$ is the image of the special family
${\mathcal L}_{u}(\tilde{\mathcal S})$ on $(M, \circ , e)$ through
the duality defined by $\mathcal E .$
\end{thm}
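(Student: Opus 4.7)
The plan is to split the proof into two independent parts: first to verify that $\mathcal{E} \circ u$ satisfies the Legendre condition on $(M, *, \mathcal{E}, \mathcal{S})$, and then to establish the equality (\ref{legendre-eq}) by comparing a single base-point representative of each of the two special families on $(M, *, \mathcal{E})$, and observing that their difference has the form $V * X * Y$.

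For the Legendre property of $\mathcal{E} \circ u$, I would fix $\tilde{\nabla} \in \tilde{\mathcal{S}}$ and consider a typical element $\nabla^{W} \in \mathcal{S}$ given by (\ref{W-x}). Since $\mathcal{E}^{-1} \circ (\mathcal{E} \circ u) = u$, substituting $Y = \mathcal{E} \circ u$ in (\ref{W-x}) yields
\begin{equation*}
\nabla^{W}_{X}(\mathcal{E} \circ u) = \mathcal{E} \circ \tilde{\nabla}_{X}(u) - \tilde{\nabla}_{u}(\mathcal{E}) \circ X + W \circ u \circ X .
\end{equation*}
Multiplying by an arbitrary $Y$ via $*$ and swapping $X \leftrightarrow Y$, the last two contributions cancel symmetrically by commutativity of $\circ$. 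Hence the identity $\nabla^{W}_{X}(\mathcal{E} \circ u) * Y = \nabla^{W}_{Y}(\mathcal{E} \circ u) * X$ collapses, after clearing $\mathcal{E}^{-1}$, to $\tilde{\nabla}_{X}(u) \circ Y = \tilde{\nabla}_{Y}(u) \circ X$, which is precisely the Legendre hypothesis on $u$ in $\tilde{\mathcal{S}}$.

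To establish (\ref{legendre-eq}), note that both sides are a priori special families on $(M, *, \mathcal{E})$: $\mathcal{D}_{\mathcal{E}}(\mathcal{L}_{u}(\tilde{\mathcal{S}}))$ by Theorem \ref{main-duality} applied to the special family $\mathcal{L}_{u}(\tilde{\mathcal{S}})$ (special by Proposition \ref{leg} (i)), and $\mathcal{L}_{\mathcal{E} \circ u}(\mathcal{S})$ by the same Proposition together with the Legendre property just proved. Equality therefore reduces to showing that some pair of representatives differs by a term of the form $V * X * Y$ with $V$ a vector field. Starting from a fixed $\tilde{\nabla} \in \tilde{\mathcal{S}}$, I take $\nabla'$ to be the image of $\mathcal{L}_{u}(\tilde{\nabla})$ under $\mathcal{D}_{\mathcal{E}}$ at $W = 0$, and the Legendre transform $\mathcal{L}_{\mathcal{E} \circ u}(\nabla^{0})$ of the $W = 0$ representative $\nabla^{0}$ of $\mathcal{S}$. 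Using $(\mathcal{E} \circ u) * Y = u \circ Y$ and $(\mathcal{E} \circ u)^{-1,*} = \mathcal{E} \circ u^{-1}$, both expressions unfold explicitly; the leading terms match by commutativity of $\circ$, leaving
\begin{equation*}
\mathcal{L}_{\mathcal{E} \circ u}(\nabla^{0})_{X} Y - \nabla'_{X} Y = u^{-1} \circ \bigl[ \tilde{\nabla}_{\mathcal{E}^{-1} \circ Y}(u \circ \mathcal{E}) - \tilde{\nabla}_{\mathcal{E}^{-1} \circ u \circ Y}(\mathcal{E}) \bigr] \circ X .
\end{equation*}

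The main obstacle is to rewrite this residue as $V * X * Y$ with $V$ independent of $X$ and $Y$. Setting $Z := \mathcal{E}^{-1} \circ Y$ and expanding the first term by Leibniz, the Legendre identity $\tilde{\nabla}_{X}(u) = \tilde{\nabla}_{e}(u) \circ X$ (which follows by putting $Y = e$ in the Legendre condition) disposes of the $\tilde{\nabla}(u)$-contribution. Rewriting $\tilde{\nabla}_{u \circ Z}(\mathcal{E}) = \tilde{\nabla}_{\mathcal{E}}(u \circ Z) + [u \circ Z, \mathcal{E}]$ via torsion-freeness and applying Leibniz again, the two $\tilde{\nabla}(\circ)$-terms that appear turn out to be $\tilde{\nabla}_{Z}(\circ)(u, \mathcal{E})$ and $\tilde{\nabla}_{\mathcal{E}}(\circ)(u, Z)$, which coincide by the total symmetry of $\tilde{\nabla}(\circ)$, while the remaining $u \circ \tilde{\nabla}_{\mathcal{E}}(Z)$ terms cancel. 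What survives is the purely Lie-algebraic expression $u \circ [Z, \mathcal{E}] - [u \circ Z, \mathcal{E}]$. Expanding $L_{\mathcal{E}}(u \circ Z)$ by Leibniz and invoking the eventual-identity characterisation (\ref{char}) in the form $L_{\mathcal{E}}(\circ)(u, Z) = [e, \mathcal{E}] \circ u \circ Z$, this collapses to $([e, \mathcal{E}] \circ u - [u, \mathcal{E}]) \circ Z$, and reinstating $Z = \mathcal{E}^{-1} \circ Y$ yields the desired form with $V = [e, \mathcal{E}] - u^{-1} \circ [u, \mathcal{E}]$, completing the verification of (\ref{legendre-eq}).
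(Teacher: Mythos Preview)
Your proof is correct and follows essentially the same approach as the paper: verify the Legendre property of $\mathcal{E}\circ u$ directly from a representative of $\mathcal{S}$, then compare a representative of each family and reduce the difference to a relation of the form $\tilde{\nabla}_{Z}(u\circ\mathcal{E}^{\pm1})-\tilde{\nabla}_{u\circ Z}(\mathcal{E}^{\pm1})=U\circ Z$, which is established using the total symmetry of $\tilde{\nabla}(\circ)$, torsion-freeness, the Legendre identity for $u$, and the eventual-identity characterisation (\ref{char}). The only cosmetic differences are that the paper uses the representative (\ref{con-na}) (from Theorem \ref{main} with $V=0$) rather than the second-structure-connection form at $W=0$, so its key relation (\ref{t}) is stated with $\mathcal{E}^{-1}$ in place of your $\mathcal{E}$; also note that your final $V$ is off by a harmless $\circ\,\mathcal{E}$ factor, since the residue you computed lives in $(\cdot)\circ X\circ Y\circ\mathcal{E}^{-1}$ rather than $(\cdot)\circ X\circ Y\circ\mathcal{E}^{-2}=(\cdot)*X*Y$.
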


\begin{proof} Recall that ${\mathcal S}={\mathcal D}_{\mathcal E}(\tilde{\mathcal S})$
is the special family on $(M, *, {\mathcal E})$ which contains
the connection
\begin{equation}\label{con-na}
\nabla_{X}(Y)= {\mathcal E}\circ \tilde{\nabla}_{X}({\mathcal
E}^{-1}\circ Y) +{\mathcal E}\circ \tilde{\nabla}_{Y}({\mathcal
E}^{-1}) \circ X,\quad \forall X, Y\in {\mathcal X}(M)
\end{equation}
where $\tilde{\nabla}$ is any connection from $\tilde{\mathcal
S}.$ Since $u$ is a Legendre field on $(M,\circ , e,
\tilde{\mathcal S})$, relation (\ref{con-na}) implies that
$$
\nabla_{X}({\mathcal E}\circ u) * Y = \nabla_{Y}({\mathcal E}\circ
u) * X,\quad X, Y\in {\mathcal X}(M),
$$
i.e ${\mathcal E}\circ u$ is a Legendre field on $(M, *, {\mathcal
E}, {\mathcal S})$. The connection
\begin{align*}
{\mathcal L}_{\mathcal E\circ u}(\nabla )_{X}(Y)&:= ({\mathcal
E}\circ u)^{-1, *}* \nabla_{X}\left(({\mathcal
E}\circ u)*Y\right)\\
&= u^{-1}\circ {\mathcal E}\circ \left(
\tilde{\nabla}_{X}({\mathcal E}^{-1}\circ u\circ Y)
+\tilde{\nabla}_{u\circ Y} ({\mathcal E}^{-1}) \circ X\right)
\end{align*}
belongs to the special family ${\mathcal L}_{{\mathcal E}\circ u}(\mathcal S)$, where
$({\mathcal E}\circ u)^{-1,*}= u^{-1}\circ \mathcal E$ is the
inverse of $\mathcal E\circ u$ with respect to the dual
multiplication $*$. On the other hand, ${\mathcal
L}_{u}(\tilde{\mathcal S})$ contains the connection ${\mathcal
L}_{u} (\tilde{\nabla})=u^{-1}\circ \tilde{\nabla}\circ u$ and
thus $({\mathcal D}_{\mathcal E}\circ {\mathcal L}_{u})
(\tilde{\mathcal S})$ contains the connection
\begin{align*}
({\mathcal D}_{\mathcal E}\circ {\mathcal
L}_{u})(\tilde{\nabla})_{X}(Y) &= {\mathcal E}\circ \left(
u^{-1}\circ \tilde{\nabla}\circ u\right)_{X} (\mathcal E^{-1}\circ
Y) +{\mathcal E}\circ \left(
u^{-1}\circ \tilde{\nabla}\circ u\right)_{Y}({\mathcal E}^{-1})\circ X\\
&= {\mathcal E}\circ u^{-1}\circ \left(\tilde{\nabla}_{X}( u\circ
{\mathcal E}^{-1}\circ Y) + \tilde{\nabla}_{Y} (u\circ {\mathcal
E}^{-1}) \circ X\right) .
\end{align*}
In order to prove our claim we need to show that ${\mathcal
L}_{\mathcal E\circ u}(\nabla )$ and $({\mathcal D}_{\mathcal
E}\circ {\mathcal L}_{u})(\tilde{\nabla})$ belong to the same
special family of connections on $(M, *, {\mathcal E})$, i.e. that
there is a vector field $U$, which needs to be determined, such that
\begin{equation}\label{t}
\tilde{\nabla}_{Y}(u\circ {\mathcal E}^{-1}) =
\tilde{\nabla}_{u\circ Y} ({\mathcal E}^{-1}) + U\circ Y,\quad \forall
Y\in {\mathcal X}(M).
\end{equation}
In order to determine $U$, we note that
\begin{align*}
\tilde{\nabla}_{Y}(u\circ {\mathcal E}^{-1})&=
\tilde{\nabla}_{Y}(u)\circ {\mathcal E}^{-1}
+\tilde{\nabla}_{{\mathcal E}^{-1}}(\circ ) (u,Y) + u\circ
\tilde{\nabla}_{Y}({\mathcal E}^{-1})\\
&= \tilde{\nabla}_{Y}(u)\circ {\mathcal E}^{-1}
+\tilde{\nabla}_{{\mathcal E}^{-1}}(u\circ Y)-
\tilde{\nabla}_{\mathcal E^{-1}}(u)\circ Y - u\circ
\tilde{\nabla}_{\mathcal E^{-1}}(Y)\\
& +u\circ \tilde{\nabla}_{Y}({\mathcal E}^{-1})\\
&= L_{\mathcal E^{-1}}(u\circ Y) +\tilde{\nabla}_{u\circ
Y}({\mathcal E}^{-1}) -  u\circ L_{{\mathcal E}^{-1}}(Y)\\
&= \left( [ {\mathcal E}^{-1}, u] + [e, {\mathcal E}^{-1}]\circ
u\right)\circ Y + \tilde{\nabla}_{u\circ Y}({\mathcal E}^{-1}),
\end{align*}
where in the first equality we used the total symmetry of
$\tilde{\nabla}(\circ )$; in the third equality we used
$$
\tilde{\nabla}_{Y}(u)\circ {\mathcal E^{-1}} =
\tilde{\nabla}_{\mathcal E^{-1}}(u)\circ Y,\quad \forall Y\in
{\mathcal X}(M),
$$
(because $u$ is a Legendre field) and the torsion-free property of $\tilde{\nabla}$; in the fourth equality we used
that $\mathcal E^{-1}$ is an eventual identity. We proved that
(\ref{t}) holds, with
\begin{equation}\label{U}
U:= [ {\mathcal E}^{-1}, u] + [e,
{\mathcal E}^{-1}]\circ u .
\end{equation}
Our claim follows.

\end{proof}

\end{document}